\tikzstyle{bubble}=[align=center, draw,inner sep=0pt,shape=ellipse,minimum height=1.25cm, minimum width=3cm]
\theoremstyle{plain}
\newtheorem{thm}{Theorem}[section]
\newtheorem{prop}[thm]{Proposition}
\newtheorem{cor}[thm]{Corollary}
\newtheorem{cnj}[thm]{Conjecture}
\newtheorem{lem}[thm]{Lemma}
\theoremstyle{definition}
\newtheorem{defn}[thm]{Definition}
\newtheorem{ex}[thm]{Example}
\newtheorem{remk}[thm]{Remark}
\newcommand{\tn}[1]{\textnormal{#1}}
\newcommand{\wt}[1]{\widetilde{#1}}
\newcommand{\bbr}{\mathbb R}
\newcommand{\bbz}{\mathbb Z}
\newcommand{\bbn}{\mathbb N}
\newcommand{\bbc}{\mathbb C}
\newcommand{\vspan}{\tn{span}}
\newcommand{\diff}{\backslash}
\newcommand{\id}{\tn{id}}
\newcommand{\Gr}{\mathrm{Gr}}
\newcommand{\Grad}{\mathrm{Gr}^{\mathrm{ad}}}
\newcommand{\Grrat}{\mathrm{Gr}^{\mathrm{rat}}}
\DeclareMathOperator {\ad} {\mathrm{ad}}
\DeclareMathOperator {\Be} {\mathrm{Be}}
\DeclareMathOperator {\Ai} {\mathrm{Ai}}
\DeclareMathOperator {\ord} {\mathrm{ord}}
\DeclareMathOperator {\cord} {\mathrm{cord}}
\newcommand{\bisl}{\mathcal{F}_x}
\newcommand{\bisr}{\mathcal{F}_y}
\newcommand{\bisls}{\mathcal{F}_{x,\text{sym}}}
\newcommand{\bisrs}{\mathcal{F}_{y,\text{sym}}}
\newcommand{\bislf}{\mathcal{B}_x}
\newcommand{\bisrf}{\mathcal{B}_y}
\newcommand{\floor}[1]{\left\lfloor #1 \right\rfloor}
\begin{document}

\title[Integral operators, bispectrality and Fourier algebras]
{Integral operators, bispectrality and 
\\ growth of Fourier algebras}
\author[W. Riley Casper]{W. Riley Casper}
\address{
Department of Mathematics \\
Louisiana State University \\
Baton Rouge, LA 70803 \\
U.S.A.}
\email{wcasper1@lsu.edu}
\author[Milen T. Yakimov]{Milen T. Yakimov}
\address{
Department of Mathematics \\
Louisiana State University \\
Baton Rouge, LA 70803 \\
U.S.A.}
\email{yakimov@math.lsu.edu}
\date{}
%\keywords{Commuting integral and differential operators, self-adjoint bispectral meromorphic functions, Fourier algebras, Wilson's adelic grasmannian}
\subjclass[2010]{Primary 47G10; Secondary 34L05, 33C10, 37K35, 37K20}
\begin{abstract}
In the mid 80's it was conjectured that every bispectral meromorphic function $\psi(x,y)$ 
gives rise to an integral operator $K_{\psi}(x,y)$ which possesses a commuting differential operator. 
This has been verified by a direct computation for several families of functions $\psi(x,y)$ where the commuting differential operator is of 
order $\leq 6$. We prove a general version of this conjecture for all self-adjoint bispectral functions of rank 1
and all self-adjoint bispectral Darboux transformations of the rank 2 Bessel and Airy functions. 
The method is based on a theorem giving an exact estimate of the second and first order terms of 
the growth of the Fourier algebra of each such bispectral function. From it we obtain
a sharp upper bound on the order of the commuting differential operator for the 
integral kernel $K_{\psi}(x,y)$ and a fast algorithmic procedure 
for constructing the differential operator; unlike the previous examples its order is arbitrarily high.
We prove that the above classes of bispectral functions are parametrized by infinite dimensional 
Grassmannians which are the Lagrangian loci of the Wilson adelic Grassmannian and its analogs
in rank 2.
\end{abstract}
\maketitle
\tableofcontents
\section{Introduction}
\subsection{Main result on bispectrality and integral operators}
\label{1.1}
A meromorphic function $\psi(x,y)$ on an open subset of $\bbc^2$ is called {\em{bispectral}} 
if it is an eigenfunction of a nonzero differential operator in each of the two variables. This notion was introduced 
by Duistermaat and Gr\"unbaum in \cite{DG} in relation to computer tomography and signal processing
and since then has found relations to many other areas: soliton equations \cite{BHYcmp,KR,W},
Calogero--Moser spaces and systems \cite{BW,BGK,W2}, orthogonal polynomials \cite{GH1,Iliev, Iliev2}, $W$-algebras and Kac--Moody algebras
\cite{BHYd,BZN}, automorphisms and ideal structure of rings of differential operators \cite{BW}.
To $\psi(x,y)$ one associates the integral operator 
$$
T_{\psi}: f(x)\mapsto \int_{\Gamma_1}K_{\psi}(x,y)f(y)dy \quad \text{with kernel} \quad K_{\psi}(x,y) = \int_{\Gamma_2} \psi(x,z)\psi(y,z) dy.
$$
Here $\Gamma_1$ and $\Gamma_2$ represent paths in $\bbc$, chosen such that $\psi(x,y)\in L^2(\Gamma_1\times\Gamma_2)$, along with other 
analytic convergence conditions.
The following property was conjectured in the mid 80's and was in the heart of the formalization of the notion of
bispectrality in \cite{DG}.
\medskip
\\
{\bf{Conjecture.}} {\em{Under mild conditions on the bispectral function
$\psi(x,y)$ and the paths $\Gamma_1, \Gamma_2$, the integral operator $T_{\psi}$ posses a commuting differential 
operator.}} 
\medskip
\\
The commutativity of integral and differential operators is formalized in \S \ref{form-comm}. 
The commutativity property in the conjecture has substantial applications to the {\em{analytic properties}} of the integral operator $T_{\psi}$  and
to the {\em{numerical computation}} of its spectrum and eigenvalues.

The conjecture was proved in the following special cases:
\begin{enumerate}
\item For the sine and Bessel bispectral functions $\psi(x,y) = e^{xy}$ and $\sqrt{xy} K_{\nu+1/2}(xy)$, Landau, Pollak, Slepian \cite{SP, LP1, S} 
constructed a 2nd order differential operator commuting with $T_{\psi}$ and used it in time-band limiting in signal processing. 
Mehta proved the fact for $\psi(x,y) = e^{xy}$ independently and applied it to random matrices \cite{Mehta}.
\item Tracy and Widom \cite{TW1, TW2} proved that for the Airy function $\psi(x,y) = \Ai(x+y)$, 
$T_{\psi}$ posses a commuting 2nd order order differential operator and applied the facts for the Bessel and Airy functions to asymptotics of Fredholm determinants
and scaling limits of random matrix models.
\item For two 1-parameter, 1-step Darboux transformations from the Bessel functions $\sqrt{xy} K_{\nu+1/2}(xy)$ with $\nu =1$ and $\nu=2$, Gr\"unbaum 
constructed a 4th and 6th order commuting differential operators  \cite{Grunbaum1996}.
\item Second order commuting differential and difference operators were constructed in several 
discrete-continuous and discrete-discrete situations starting with the Hermite, Laguerre and Jacobi polynomials \cite{Grunbaum1983} and 
expanding to several other situations \cite{Per,Perli}.
\item Commuting differential and difference operators were also constructed for matrix-valued generalizations of the
examples in (4), culminating in the work of Gr\"unbaum, Pacharoni, and Zurrian \cite{GPZ} which constructs 2nd order commuting differential/difference 
operators for matrix orthogonal polynomials whose weights satisfy a functional equation. 
\end{enumerate}
We obtain the following general solution of the conjecture:
\medskip
\\
{\bf{Theorem A.}} (i) {\em{For all self-adjoint bispectral Darboux transformations $\wt{\psi}(x,y)$ of the exponential function $e^{xy}$  and 
self-adjoint bispectral Darboux transformations of the rank 2 Airy $\Ai(x+y)$ and Bessel $\sqrt{xy} K_{\nu+1/2}(xy)$ functions, the integral operator with 
kernel $K_{\wt\psi}(x,y)$ posses a commuting differential operator which is formally symmetric (i.e., equals its formal adjoint).
}}

(ii) {\em{The class of bispectral functions in part (i) of rank 1 are parametrized by the points of the infinite dimensional 
Grassmannian which is the Lagrangian locus of the Wilson's adelic Grassamannian \cite{W}. 
The class of bispectral functions in part (i) of rank 2 are parametrized by the points of the Lagrangian loci of the 
infinite dimensional Grassmannians of rank 2 bispectral functions from \cite{BHYcmp}.
}}
\medskip

The following additional results are obtained:
\begin{enumerate}
\item {\bf{Effective upper bounds}} on the order of the commuting differential
operator are given in Theorems \ref{exp theorem}, \ref{airy theorem}, and \ref{bessel theorem}. 
\item An {\bf{algorithm for fast computation of the commuting differential operator}} is described in \S \ref{6.1}. The algorithm  
only relies on solving certain linear systems of equations and compositions of differential operators. 
\item Section 6 contains a variety of examples illustrating this algorithm, including two  
examples of integral operators, the first of which commutes simultaneously with two differential operators of {\bf{orders 6 and 8}} and the second 
commutes with a differential operator of {\bf{order 22}}. Commuting differential 
operators of such high orders were never found before.
\end{enumerate}

The first part of the theorem is proved in Section \ref{main thm a}. The second part is proved in Section \ref{classif}.
Section \ref{bDarboux} contains background on the notion of self-adjoint bispectral Darboux transformations and 
its role in previous works on the classification of bispectral functions.

Theorem A has the following important special cases:
\begin{enumerate}
\item By the main result of \cite{DG}, all bispectral meromorphic functions $\wt\psi(x,y)$ that are eigenfunctions of a 2nd order differential operator are obtained 
as iterated self-adjoint bispectral Darboux transformations from the Bessel functions $\sqrt{xy} K_{\nu+1/2}(xy)$ and are covered as special cases 
by Theorem A. Even in this special situation the theorem is new and few cases of it were previously known, cf. \S \ref{1.1} (3).
\item By the main result of \cite{W} (and its interpretation in \cite{BHYcmp}), all rank 1 bispectral functions $\wt\psi(x,y)$ are  bispectral Darboux transformations
from the exponential function $e^{xy}$. Such a function is called self-adjoint if it is an eigenfunction of a formally symmetric differential operator in $x$ and $y$.
All rank 1 self-adjoint bispectral functions $\wt\psi(x,y)$ are also covered by Theorem A.
\end{enumerate}
\subsection{Main result on growth of Fourier algebras}
\label{1.2}
Our proof of Theorem A is based on the construction of Fourier algebras associated to bispectral functions and a sharp estimate on their growth. 
For a bispectral meromorphic function $\psi(x,y)$ defined on a connected open subset $U\times V$ of $\bbc^2$, 
define the {\bf{left and right Fourier algebras of differential operators}} for $\psi$ by
\begin{align*}
\bisl(\psi) = \{\mathfrak d_x \in \mathfrak D(U) : \, &\text{there exists a differential operator $\mathfrak b_y \in \mathfrak D(V)$}  \\
&\text{satisfying $\mathfrak d_x \cdot\psi(x,y) =  \mathfrak b_y  \cdot \psi(x,y)$}\}
\end{align*}
and 
\begin{align*}
\bisr(\psi) = \{\mathfrak b_y \in \mathfrak D(V) : \, &\text{there exists a differential operator $\mathfrak d_x \in \mathfrak D(U)$}  \\
&\text{satisfying $\mathfrak d_x \cdot\psi(x,y) = \mathfrak b_y  \cdot \psi(x,y)$}\}.
\end{align*}
Here and below  $\mathfrak D(U)$ denotes the algebra of differential operators with meromorphic coefficients on $U$. 
The algebras $\bisl(\psi)$ and $\bisr(\psi)$ are anti-isomorphic, via the map $b_\psi: \bisl(\psi)\rightarrow\bisr(\psi)$ defined by
$$\mathfrak d_x \cdot\psi(x,y) = b_\psi(\mathfrak d_x )\cdot\psi(x,y),$$
see Proposition \ref{b-psi}. This map will be called the {\bf{generalized Fourier map}} of $\psi$. The terminology is motivated from 
the fact that for $\psi(x,y) = e^{xy}$ one recovers the usual Fourier transformation of differential operators. The key idea is that each operator 
$\mathfrak d_x \in \bisl(\psi)$ has well defined 
\[
\mbox{\bf{order}} \; \;  \ord \mathfrak d_x \in \bbn \quad \mbox{and} \quad \mbox{\bf{co-order}} \; \; \ord b_\psi( \mathfrak d_x ) \in \bbn.
\]
They give rise to an $\bbn\times\bbn$ {\bf{filtration}} of $\bisl(\psi)$ formed by the subspaces
$$
\bisl^{\ell,m}(\psi) = \{\mathfrak d\in \bisl(\psi): \ord(\mathfrak d) \leq \ell,\ \ord(b_\psi(\mathfrak b))\leq m\} 
\quad \mbox{for} \quad \ell, m \in \bbn.
$$
The spaces $\bisl^{\ell,m}(\psi)$  are finite dimensional and one can study the {\em{growth}} of their dimensions as
$\ell, m \to \infty$.
We will denote by $\bisls^{\ell,m}(\psi)$ the symmetric subspaces of $\bisl^{\ell,m}(\psi)$ consisting of operators 
$\mathfrak d_x \in \bisl^{\ell,m}(\psi)$ such that $\mathfrak d_x$ and $b_\psi(\mathfrak d_x)$ are 
{\bf{formally symmetric}}.
 
After the breakthrough work of Wilson \cite{W} bispectral functions are classified by rank (see Definition \ref{rank})
via a realization as {\bf{bispectral Darboux transformations}} 
in the terminology of \cite{BHYcmp}. This means that new bispectral functions $\wt\psi(x,y)$ are constructed from old ones $\psi(x,y)$ 
via a representation of the form
$$
\wt\psi(x,y) = \frac{1}{q(y)p(x)}\mathfrak{u} \cdot\psi(x,y) \ \
\text{and} \ \
\psi(x,y) = \wt{\mathfrak{u}} \cdot \frac{1}{\wt{q}(y)\wt{p}(x)} \wt \psi(x,y)
$$
for some differential operators $\mathfrak{u},\wt{\mathfrak u}\in\bisl(\psi)$ and polynomials $p(x)$, $\wt{p}(x)$ and $q(y)$, 
$\wt{q}(y)$ as in Definition \ref{bisp-Darb}. We call the pair $( \ord \mathfrak{u}, \ord \wt{\mathfrak u}) \in \bbn \times \bbn$ 
the {\bf{bidegree of the transformation}}. We refer the reader to Section \ref{bDarboux} for details on the relation of this construction to the classical 
Darboux process in terms of factorizations of differential operators.

For a fixed bispectral function $\psi(x,y)$, all bispectral Darboux transformations of $\psi(x,y)$ are classified by 
the points of infinite dimensional Grassmannians generalizing Wilson's adelic Grassmannian \cite{W,BHYcmp}.
All rank 1 bispectral functions (classified by the points of the original adelic Grassmannian) 
are  bispectral Darboux transformations from the function $e^{xy}$. 
\smallskip
\\
{\bf{Theorem B.}} (i) {\em{If $\wt{\psi}(x,y)$ is a bispectral Darboux transformation from the bispectral meromorphic function $\psi(x,y)$
satisfying natural mild assumptions, then for all $\ell, m \in \bbc$, 
$$
| \dim \bisl^{\ell,m}(\wt\psi)  - \dim  \bisl^{\ell,m}(\psi)  | \leq \mathrm{const}
$$
for a constant that is independent on $\ell$ and $m$. 
}}

(ii) {\em{If $\wt\psi(x,y)$ is a self-adjoint bispectral Darboux transformation of bidegree $(d_1, d_2)$ from the exponential $e^{yx}$, the Airy $\Ai(x+y)$, or Bessel 
functions  $\sqrt{xy} K_{\nu+1/2}(xy)$  with $\nu \in \bbc \backslash \bbn$, then 
$$
| \dim\bisls^{2\ell,2m}({\wt\psi}) - (\ell m + \ell + m + 2) | \leq d_1d_2
$$
for all $\ell, m \in \bbn$.
}} 
\smallskip
\\
The precise form of the first part of the theorem is given in Theorem \ref{control thm} (see also Corollary \ref{dimension cor} and Remark \ref{easy form}). 
The second part of the theorem is a combination of Theorem \ref{control thm} and Lemmas \ref{exp lemma}, \ref{airy lemma}, and \ref{bessel lemma}. 
\smallskip
\\
{\bf{Remark.}} (i) {\em{Theorem B establishes that the growth of $\dim \bisl^{\ell,m}(\wt\psi)$ and $\dim\bisls^{2\ell,2m}({\wt\psi})$ as functions of $\ell$ and $m$ is quadratic for huge classes of bispectral functions. The theorem gives exactly the quadratic and the two linear terms of the dimension functions and a sharp upper bound on their constant terms.}}

(ii) {\em{In a typical situation we start with a simple bispectral function $\psi(x,y)$ which is an eigenfunction of low degree differential operators {\em{(}}e.g. $e^{yx}$, $\Ai(x+y)$, or 
$\sqrt{xy} K_{\nu+1/2}(xy)${\em{)}} and we built a very complicated bispectral function $\wt{\psi}(x,y)$ which is an eigenfunction of differential operators of very high degrees. 
The remarkable feature of the theorem is that it proves that the Fourier algebras $\bisl(\wt\psi)$ and $\bisl(\psi)$ have the exactly same growth up to a linear term.}}

Theorems A and B fit in the following diagram:
\begin{center}
\begin{tikzpicture}[x=2cm,y=-2cm]
 \path (4,0) node[bubble] (comm) {{\color{blue}{Commuting differential}} \\ {\color{blue}{operator for the integral kernel $K_{\wt{\psi}}$}}};
 %\path (4,0) node[bubble] (fellow) {Postdoc.\\ Fellowships};
% \path (1,1) node[bubble] (vir) {VIR Seminars};
 \path (2,1) node[bubble] (growth) {{\color{red}{Growth of}} \\ {\color{red}{the Fourier algebra of $\wt\psi$}}};
 %\path (5,1) node[bubble] (rtsc) {{\color{blue}{Representation}} \\ {\color{blue}{Theory}}};
 \path (0,0) node[bubble] (bisp) {{\color{blue}{Bispectral}} \\ {\color{blue}{meromorphic function $\wt\psi$}} };
 %\path (4,2) node[bubble] (gumbo) {Rep.~Theory\\ Gumbo};
 
% \draw[->] (train) .. controls(1,1) .. (gumbo);
 \draw[->,line width=0.5mm] (bisp) -- (comm);
 \draw[->,line width=0.5mm] (bisp) -- (growth);
 \draw[->,line width=0.5mm] (growth) -- (comm);
\end{tikzpicture}
\end{center}
\begin{enumerate}
\item {\em{Theorem A provides the horizontal implication, while Theorem B provides the left implication.}} 
\item {\em{The right implication is provided by Proposition \ref{fundamental dimension estimate}. 
The key idea of the right implication is that the 
commuting differential operator for the integral operator with kernel $K_{\wt{\psi}}$ is constructed as an element 
of the space $\dim\bisls^{2\ell,2m}({\wt\psi})$.}}
\end{enumerate}
While the horizontal implication (the conjecture in \S \ref{1.1}) has substantial analytic and numerical applications, the link that proves this relationship (in red)
is of algebraic nature.

This paper is a continuation of our collaboration with F. Alberto Gr\"unbaum \cite{GrYa}, which aimed at constructing a 
bridge between bispectrality and commuting integral and diferential operators by controlling the sizes 
of Fourier algebras under Darboux transformation. This strategy was announced in \cite{GrYa}. Among other things, the results in this paper 
fully justify the statements in \cite{GrYa} which contained no proofs. 
%In addition, improved versions of those results are proved here and 

When bispectral functions $\wt{\psi}(x,y)$ are converted to wave functions $\wt{\Psi}(x,y)$ for the KP hierarchy via asymptotic expansions at $\infty$, 
the elements of the right Fourier algebra $\bisr(\wt\psi)$ correspond to the $W$-constraints for the wave function $\wt{\Psi}(x,y)$. The latter generalize the 
string equation for the Airy wave function \cite{Wit,Kon,vM} which played a key role in quantum gravity and intersection theory on moduli spaces
of curves. Consequently, in algebraic geometry the $W$-constraints of Gromov--Witten invariants and the total descendant potential of a simple singularity
were extensively studied by Okounkov--Pandharipande \cite{OP},  Bakalov--Milanov \cite{BM} and in many other works.

The $W$-constraints for wave functions of the Toda and multi-component KP hierarchies have played an important role in many situations.
In the theory of random matrices, Adler and van Moerbeke \cite{AvM99,AvM01,AvM07} used Virasoro constraints 
to derive a system of partial differential equations for the distributions of the spectra of coupled random matrices.
Bakalov, Horozov, and Yakimov \cite{BHYd} used $W$-constraints to prove that all tau-functions in the quasifinite representations 
of the $W_{1+\infty}$-algebra (in the sense of Frenkel, Kac, Radul, and Wang \cite{FKRW}) with highest weight vectors given by Bessel 
tau-functions are bispectral tau-functions.  
Our growth estimate theorem for the the right Fourier algebra $\bisr(\wt\psi)$ translates 
directly to a growth estimate theorem for the algebra of $W$-constraints of a wave function. In this way 
Theorem B has potential applications to the above topics in random matrices and representations of the $W_{1+ \infty}$-algebra.

More generally, Virasoro constraints are linked to the isomonodromic deformations approach to random matrices from the 
works of Palmer \cite{Pal}, Harnad--Tracy--Widom \cite{HTW}, Its--Harnad \cite{HI}, and Borodin--Deift \cite{BD}. Of particular
interest is the relation to the series of results in the literature on random matrices showing that the Fredholm determinants of various
kernels are solutions of Painlev\'e equation. The latter naturally appear in other problems, e.g. representations of $U(\infty),$
see Borodin--Olshanski \cite{B, BO}. We expect that the growth estimates from Theorem B on the algebra of $W$-constraints will 
also have applications in these respects. The main idea here is to apply the full algebra of $W$-constraints vs concrete 
Virasoro constraints.

We will use the following conventions for the notation in the paper. Differential operators will always appear in the Gothic font, e.g. $\mathfrak d = \partial_x^2 - \frac{2}{x^2}$. 
When needed, we will write $\mathfrak d_x$ in place of $\mathfrak d$ to emphasize the action of the operator in the variable $x$.  
In this case $\mathfrak d_y$ will represent the same operator, but acting in the variable $y$.  
We will denote by $\mathfrak D(\bbc[x])$ and $\mathfrak D(\bbc(x))$ the algebras of differential operators in $x$ with polynomial and rational coefficients.
For a noncommutative algebra $R$ and $r, s \in R$, we set
$$
\ad_r(s) = [r,s] = rs-sr.
$$
\section*{Acknowledgements} 
We are grateful to F. Alberto Gr\"unbaum for his insight, suggestions and support throughout the different stages of this 
project. The research of M.T.Y. was supported by NSF grant DMS-1601862 and Bulgarian Science Fund grant H02/15.
\section{Bispectrality and double filtrations of Fourier algebras}
\label{bisp-symm}
In this section we collect background material on bispectral functions and Fourier algebras associated to them.
We introduce one of the key players in the paper -- a double filtration on each such algebra.
%%%%%%%%%
\subsection{Bispectral meromorphic functions and associated Fourier algebras}
\label{2.1}
We first review the definition of a bispectral meromorphic function.
For an open subset $U$ of $\bbc$, denote by $\mathfrak D(U)$ the algebra of differential operators on $U$ with meromorphic coefficients.
\begin{defn}
A nonconstant meromorphic function $\psi(x,y)$ defined on a connected open subset $U\times V$ of $\bbc^2$ is said to be {\bf{bispectral}} if there exist differential operators 
$\mathfrak d\in\mathfrak D(U)$ and $\mathfrak b\in\mathfrak D(V)$ such that
$$
\mathfrak d_x\cdot\psi(x,y) = g(y)\psi(x,y)\ \ \text{and}\ \ \mathfrak b_y\cdot\psi(x,y) = f(x)\psi(x,y)
$$
for some nonconstant functions $f(x)$ and $g(y)$ meromorphic on $U$ and $V$, respectively.
\end{defn}

There are three examples of bispectral meromorphic functions that will play a fundamental role in this paper: the exponential, Airy and Bessel functions. 
We will refer to them as the {\bf{elementary bispectral functions}}.
Due to their central role, we will introduce a notation for each that will be used throughout the paper.
The notation and values of the elementary bispectral functions is given in the table in Figure \ref{elementary bispectral functions}, where for $\nu\in\bbc$ the expression $K_{\nu}(t)$ denotes the Bessel function of the second kind and $\Ai(t)$ denotes Airy function of the first kind.
\begin{center}
\begin{figure}[htp]
\begin{tabular}{|c|c|c|}\hline
name & function & operator\\\hline
bispectral exponential & $\psi_{\exp}(x,y) = e^{xy}$ & $\partial_x$\\\hline
bispectral Airy        & $\psi_{\Ai}(x,y) = \Ai(x+y)$ & $\mathfrak d_{\Ai} = \partial_x^2 - x$\\\hline
bispectral Bessel      & $\psi_{\Be(\nu)}(x,y) = \sqrt{xy} K_{\nu+1/2}(xy)$ & $\mathfrak d_{\Be(\nu)} = \partial_x^2 - \frac{\nu(\nu+1)}{x^2}$\\\hline
\end{tabular}
\caption{The elementary bispectral functions.}
\label{elementary bispectral functions}
\end{figure}
\end{center}
The bispectral Bessel functions $\psi_{\Be(\nu)}(x,y)$ are in particular a one parameter family of functions indexed by $\nu$.
For integer values of $\nu$ the Bessel functions simplify to rational functions multiplied by exponential functions.
For example $\psi_{\Be(0)}(x,y)$ is proportional to $e^{xy}$ and $\psi_{\Be(1)}(x,y)$ is proportional to $e^{xy}(1-(xy)^{-1})$.

Each of the elementary bispectral functions are, as the name suggests, bispectral and thus are families of eigenfunctions in both variables $x$ and $y$.
For example the bispectral exponential function satisfies
$$\partial_x\cdot\psi_{\exp}(x,y) = y\psi_{\exp}(x,y) \ \  \text{and} \ \ \partial_y\cdot\psi_{\exp}(x,y) = x\psi_{\exp}(x,y).$$
The Bessel and Airy bispectral functions are eigenfunctions of the Bessel and Airy operators $\mathfrak d_{\Be(\nu)}$ and $\mathfrak d_{\Ai}$ 
from the last column of the table in Figure \ref{elementary bispectral functions}.
The associated differential equations are
\begin{equation}
\label{Bessel-eq}
\mathfrak{d}_{\Be(\nu), x} \cdot\psi_{\Be(\nu)}(x,y) = y^2\psi_{\Be(\nu)}(x,y)\ \ \text{and}\ \  \mathfrak{d}_{\Be(\nu), y} \cdot\psi_{\Be(\nu)}(x,y) = x^2\psi_{\Be(\nu)}(x,y),
\end{equation}
\begin{equation}
\label{Airy-eq}
\mathfrak{d}_{\Ai,x} \cdot \psi_{\Ai} (x,y) = y \psi_{\Ai}(x,y)\ \ \text{and}\ \ \mathfrak{d}_{\Ai,y}\cdot\psi_{\Ai}(x,y) = x\psi_{\Ai}(x,y).
\end{equation}

Bispectral meromorphic functions in general satisfy a wide collection of differential equations and the associated operators form algebras.
\begin{defn}
Let $\psi(x,y)$ be a bispectral meromorphic function defined on the connected open subset $U\times V$ of $\bbc^2$. 
We define the {\bf{left}} and {\bf{right Fourier algebras}} of differential operators for $\psi$ by
\begin{align*}
\bisl(\psi) = \{\mathfrak d \in \mathfrak D(U) : \, &\text{there exists a differential operator $\mathfrak b \in \mathfrak D(V)$}  \\
&\text{satisfying $\mathfrak d\cdot\psi(x,y) =  \mathfrak b \cdot \psi(x,y)$}\}
\end{align*}
and 
\begin{align*}
\bisr(\psi) = \{\mathfrak b \in \mathfrak D(V) : \, &\text{there exists a differential operator $\mathfrak d \in \mathfrak D(U)$}  \\
&\text{satisfying $\mathfrak d\cdot\psi(x,y) = \mathfrak b \cdot \psi(x,y)$}\}.
\end{align*}
The algebras $\bisl(\psi)$ and $\bisr(\psi)$ come with distinguished subalgebras
\begin{align*}
\bislf(\psi) &= \{\mathfrak d\in\bisl(\psi): \text{for some meromorphic function $g(y)$, $\mathfrak d\cdot\psi(x,y) = g(y)\psi(x,y)$}\}. \\
\bisrf(\psi) &= \{\mathfrak b\in\bisr(\psi): \text{for some meromorphic function $f(x)$, $\mathfrak b\cdot\psi(x,y) = f(x)\psi(x,y)$}\}.
\end{align*}
The algebras $\bislf(\psi)$ and $\bisrf(\psi)$ are called the algebras of {\bf{left}} and {\bf{right bispectral differential operators}} for $\psi(x,y)$.
\end{defn}
The function $\psi(x,y)$ is an eigenfunction of all operators in these two algebras.
In a neighborhood of a sufficiently nice point $(x_0,y_0)$ of the domain of $\psi(x,y)$, the coefficients of the operators in $\bislf(\psi)$ and $\bisrf(\psi)$ will be analytic.
Then by an appropriate change of variables, we may assume that $\bislf(\psi)$ and $\bisrf(\psi)$ both contain a nonconstant differential operator whose leading coefficient is constant.
Consequently by \cite[Eq. (1.20)]{DG}, all functions $f(x)$ and $g(y)$ that appear in the definition of $\bislf(\psi)$ and $\bisrf(\psi)$ are polynomial.
Furthermore the algebras $\bislf(\psi)$ and $\bisrf(\psi)$ are necessarily commutative, so every element in each algebra will have constant leading coefficient.

\begin{remk}
\label{Fourier}
The name Fourier algebras is inspired by the case where $\psi$ is the simplest bispectral function $\psi_{\exp}(x,y) = e^{xy}$.  
In this case $\bisl(\psi_{\exp})= \mathfrak D(\bbc[x])$, the algebra of differential operators with polynomial coefficients.  
Similarly, $\bisr(\psi_{\exp}) = \mathfrak D(\bbc[y])$.  For any $\mathfrak d\in\mathfrak D(\bbc[x])$ with
$$\mathfrak d = \sum_{j=0}^\ell\sum_{k=0}^m a_{jk}x^k\partial_x^j$$
we have we have $\mathfrak d\cdot e^{xy} = \mathfrak b\cdot e^{xy}$ for $\mathfrak b\in\mathfrak D(\bbc[y])$ defined by
$$\mathfrak b = \sum_{j=0}^\ell\sum_{k=0}^m a_{jk}y^j\partial_y^k.$$
The operator $\mathfrak b$ is the Fourier transform of $\mathfrak d$.
\qed
\end{remk}

\begin{prop}
\label{b-psi}
Let $\psi(x,y)$ be a bispectral meromorphic function defined on $U\times V$.  The algebras $\bisl(\psi)$ and $\bisr(\psi)$ are anti-isomorphic via the map 
$b_\psi: \bisl(\psi)\rightarrow\bisr(\psi)$ defined by sending $\mathfrak d\in\bisl(\psi)$ to $\mathfrak b\in\bisr(\psi)$, where $\mathfrak d\cdot\psi(x,y) = \mathfrak b\cdot\psi(x,y)$.
\end{prop}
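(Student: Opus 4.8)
The plan is to prove, in order: (1) the map $b_\psi$ is well defined; (2) it is $\bbc$-linear and multiplication-reversing, which simultaneously shows that $\bisl(\psi)$ and $\bisr(\psi)$ are closed under composition; (3) it is a bijection. The heart of the matter is (1), and for this the only point to check is that a differential operator in a single variable which annihilates $\psi$ identically must be zero: granting this, for $\mathfrak d\in\bisl(\psi)$ the operator $\mathfrak b\in\mathfrak D(V)$ with $\mathfrak d\cdot\psi = \mathfrak b\cdot\psi$ is uniquely determined, and we set $b_\psi(\mathfrak d):=\mathfrak b$; the construction of the inverse map will follow by the symmetric statement.

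So suppose $\mathfrak b\in\mathfrak D(V)$ has order $m$ and $\mathfrak b\cdot\psi(x,y)\equiv 0$. I would invoke bispectrality in the variable $y$: there exist $\mathfrak b_0\in\mathfrak D(V)$ and a nonconstant meromorphic function $f$ on $U$ with $\mathfrak b_{0,y}\cdot\psi(x,y) = f(x)\psi(x,y)$. After replacing $U\times V$ by a connected, simply connected open subset on which the coefficients of $\mathfrak b$ and $\mathfrak b_0$ are holomorphic with nowhere vanishing leading coefficients, the function $y\mapsto\psi(x,y)$ is, for each fixed $x\in U$, an eigenfunction of $\mathfrak b_{0,y}$ with eigenvalue $f(x)$. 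Choosing $x_1,x_2,\dots\in U$ with $y\mapsto\psi(x_i,y)$ not identically zero and with the values $f(x_i)$ pairwise distinct — possible since $f$ is nonconstant and $\psi$ does not vanish on a generic fiber $\{x\}\times V$ — the functions $\psi(x_i,\cdot)$ are linearly independent, so $\{\psi(x,\cdot):x\in U\}$ spans an infinite-dimensional space of holomorphic functions of $y$. On the other hand, a nonzero operator of order $m$ with nowhere vanishing leading coefficient has at most $m$-dimensional kernel after restriction to a suitable disc, so $\mathfrak b$ cannot kill all the $\psi(x_i,\cdot)$; hence $\mathfrak b = 0$. The same argument with $x$ and $y$ interchanged shows that a nonzero operator in $\mathfrak D(U)$ cannot annihilate $\psi$ either.

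Granting well-definedness, $\bbc$-linearity is immediate from $(\alpha\mathfrak d_1+\beta\mathfrak d_2)\cdot\psi = (\alpha b_\psi(\mathfrak d_1)+\beta b_\psi(\mathfrak d_2))\cdot\psi$, which also shows $\bisl(\psi)$ is a linear subspace. For the anti-multiplicativity, take $\mathfrak d_1,\mathfrak d_2\in\bisl(\psi)$, write $\mathfrak b_i:=b_\psi(\mathfrak d_i)$, and use that an operator acting in $x$ commutes with one acting in $y$:
\[
(\mathfrak d_1\mathfrak d_2)\cdot\psi \;=\; \mathfrak d_1\cdot(\mathfrak b_2\cdot\psi) \;=\; \mathfrak b_2\cdot(\mathfrak d_1\cdot\psi) \;=\; \mathfrak b_2\cdot(\mathfrak b_1\cdot\psi) \;=\; (\mathfrak b_2\mathfrak b_1)\cdot\psi .
\]
Since $\mathfrak b_2\mathfrak b_1\in\mathfrak D(V)$, this shows $\mathfrak d_1\mathfrak d_2\in\bisl(\psi)$, that $\mathfrak b_2\mathfrak b_1\in\bisr(\psi)$, and that $b_\psi(\mathfrak d_1\mathfrak d_2) = b_\psi(\mathfrak d_2)\,b_\psi(\mathfrak d_1)$; together with $\id\in\bisl(\psi)$ and $b_\psi(\id)=\id$, this makes $b_\psi$ an algebra anti-homomorphism. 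Finally, running the uniqueness step of the previous paragraph with $x$ and $y$ interchanged, every $\mathfrak b\in\bisr(\psi)$ equals $b_\psi(\mathfrak d)$ for exactly one $\mathfrak d\in\bisl(\psi)$; hence $b_\psi$ is bijective, and therefore an anti-isomorphism.

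The main obstacle is the well-definedness step, i.e.\ proving that no nonzero differential operator in one of the two variables annihilates $\psi$; this is exactly the place where the bispectral hypothesis (rather than mere meromorphy of $\psi$) must be used, and the only delicate point is to pass to an open subset on which the eigenfunction-independence argument and the classical bound on the dimension of the solution space of a linear ODE apply simultaneously.
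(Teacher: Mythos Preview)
Your proof is correct, and your treatment of linearity, anti-multiplicativity, and bijectivity is in fact more detailed than the paper's, which dispatches those with ``a simple argument shows that $b_\psi$ is an anti-isomorphism.''

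The genuine difference is in the well-definedness step. You argue analytically: the nonconstant eigenvalue function $f(x)$ produces infinitely many linearly independent fibers $\psi(x_i,\cdot)$, while a nonzero ODE of order $m$ has at most an $m$-dimensional solution space on a suitable disc. The paper instead uses a purely algebraic trick: if $\mathfrak b\cdot\psi=0$ and $g(y)$ is a nonconstant function in $\bisr(\psi)$ (supplied by bispectrality), then $\ad_{g(y)}^k(\mathfrak b)\cdot\psi=0$ for every $k$; taking $k=\ord(\mathfrak b)$ collapses $\ad_{g(y)}^k(\mathfrak b)$ to a nonzero function $h(y)$, whence $h(y)\psi\equiv 0$, a contradiction. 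The paper's route is shorter and avoids any shrinking of the domain or appeal to the local existence theory for linear ODEs; your route is more geometric and makes the role of bispectrality (producing many independent eigenfunctions) quite vivid. Both exploit exactly the same hypothesis, just packaged differently.
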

\begin{proof}
Suppose $\mathfrak d\in\bisl(\psi)$ with $\mathfrak d\cdot\psi(x,y) = g(y)\psi(x,y)$ for some $g(y)\in\bisr(\psi)$ nonconstant.  If there exists a nonzero $\mathfrak b\in\bisr(\psi)$ satisfying $\mathfrak b\cdot\psi(x,y) = 0$, then $\ad_{g(y)}^k(\mathfrak b)\cdot\psi(x,y) = 0$ for all $k$.  However, for $k$ equal to the order of $\mathfrak b$, the operator $\ad_{g(y)}^k(\mathfrak b)$ will be equal to a nonzero function $h(y)$.  This would imply $h(y)\psi(x,y) = 0$, and that $\psi(x,y)$ is therefore $0$, a contradiction.  Thus if $\mathfrak b\in\bisr(\psi)$ satisfies $\mathfrak b\cdot\psi(x,y) = 0$, then $\mathfrak b = 0$.  Similarly, if $\mathfrak a\in\bisl(\psi)$ satisfies $\mathfrak a\cdot\psi(x,y) = 0$, then $\mathfrak a = 0$.

Using this, we see that for every $\mathfrak a\in\bisl(\psi)$ there exists a unique $\mathfrak b\in\bisl(\psi)$ such that $\mathfrak a\cdot\psi(x,y) = \mathfrak b\cdot\psi(x,y)$.  This shows that the map $b_\psi$ is well-defined and bijective.  A simple argument shows that $b_\psi$ is an anti-isomorphism.  This completes the proof.
\end{proof}

\begin{defn}
We call the map $b_\psi: \bisl(\psi)\rightarrow\bisr(\psi)$ from the previous proposition the {\bf{generalized Fourier map}} or the {\bf{bispectral anti-isomorphism}} of $\psi$.
\end{defn}
\begin{ex} 
\label{Fourier-ex}
(1) By Remark \ref{Fourier},  for the exponential bispectral function $\psi_{\exp}$, the Fourier algebras are 
the algebras of differential operators with polynomial coefficients 
$$
\bisl(\psi_{\exp})= \mathfrak D(\bbc[x]), \ \ \bisr(\psi_{\exp}) = \mathfrak D(\bbc[y])
$$
and the anti-isomorphism $b_{\psi_{\exp}} : \mathfrak D(\bbc[x]) \to \mathfrak D(\bbc[y])$ is the Fourier transform given by $\partial_x  \mapsto y$ and $x \mapsto \partial_y$.
The algebras of bispectral operators $\bislf(\psi_{\exp})$ and $\bislf(\psi_{\exp})$ are the algebras of differential operators with constant coefficients in $x$ and $y$, respectively.

(2) The Bessel bispectral functions $\psi_{\Be(\nu)}(x,y)$ satisfy
$$
x \partial_x \cdot \psi_{\Be(\nu)}(x,y) = y \partial_y \cdot \psi_{\Be(\nu)}(x,y)
$$
in addition to the equations \eqref{Bessel-eq}. The left and right Fourier algebras are given by 
$$
\bisl(\psi_{\Be(\nu)}) = \langle \mathfrak d_{\Be(\nu),x}, x \partial_x, x^2 \rangle \ \ \text{and} \ \
\bisr(\psi_{\Be(\nu)}) = \langle \mathfrak d_{\Be(\nu),y}, y \partial_y, y^2 \rangle
$$
for $\nu \in \bbc \backslash \bbz$. (In the case $\nu \in \bbz$ the Fourier algebras are bigger, but this will not a play a role in the paper.)
Here $\langle \mathfrak d_1, \ldots, \mathfrak d_k \rangle$ denotes the algebra of differential operators with generators
$\mathfrak d_1, \ldots, \mathfrak d_k$.
The generalized Fourier map $b_{\psi_{\Be(\nu)}} : \bisl(\psi_{\Be(\nu)}) \to \bisl(\psi_{\Be(\nu)})$ is given by
$$
\mathfrak d_{\Be(\nu),x} \mapsto  y^2, \ \ x \partial_x \mapsto y \partial_y, \ \ x^2 \mapsto \mathfrak d_{\Be(\nu),y}.
$$
The algebras of bispectral operators $\bislf(\psi_{\Be(\nu)})$ and $\bislf(\psi_{\Be(\nu)})$ are the polynomial algebras in the Bessel operators 
$\mathfrak d_{\Be(\nu),x}$ and $\mathfrak d_{\Be(\nu),y}$, respectively.

(3) The Airy bispectral function $\psi_{\Ai}(x,y)$ satisfies
$$
\partial_x \cdot \psi_{\Ai}(x,y) = \partial_y \cdot \psi_{\Ai}(x,y)
$$
in addition to the equations \eqref{Airy-eq}. The left and right Fourier algebras associated to it are the algebras of differential operators
with polynomial coefficients
$$
\bisl(\psi_{\Ai}) = \mathfrak D(\bbc[x]) \ \ \text{and} \ \
\bisr(\psi_{\Ai}) = \mathfrak D(\bbc[y]).
$$
The generalized Fourier map $b_{\psi_{\Ai}} : \bisl(\psi_{\Ai}) \to \bisl(\psi_{\Ai})$ is given by
$$
\partial_x \mapsto \partial_y, \ \ x \mapsto \mathfrak d_{\Ai,y}.
$$
It satisfies $\mathfrak d_{\Ai,x} \mapsto  y$. 
The algebras of bispectral operators $\bislf(\psi_{\Ai})$ and $\bislf(\psi_{\Ai})$ are the polynomial algebras in the Airy operators 
$\mathfrak d_{\Ai,x}$ and $\mathfrak d_{\Ai,y}$, respectively.
\qed
\end{ex}
%%%%%%%%%
\subsection{Double filtrations of Fourier algebras}
Using the generalized Fourier map of a bispectral function $\psi(x,z)$, 
we define natural filtrations of the left and right Fourier algebras $\bisl(\psi)$ and $\bisr(\psi)$ as follows.
\begin{defn}\label{filtration def}
We define the {\bf{co-order}} of an operator $\mathfrak d\in\bisl(\psi)$, denoted $\cord(\mathfrak d)$, to be the order of $b_\psi(\mathfrak d)$.  
Similarly, we define the {\bf{co-order}} of $\mathfrak b\in\bisr(\psi)$, again denoted $\cord(\mathfrak b)$, 
to be the order of $b_\psi^{-1}(\mathfrak b)$.  We define {\bf{$\bbn\times\bbn$-filtrations}} of the Fourier algebras $\bisl(\psi)$ and $\bisr(\psi)$ by
\begin{align*}
\bisl^{\ell,m}(\psi) &= \{\mathfrak d\in\bisl(\psi): \text{$\ord(\mathfrak d)\leq \ell$ and $\cord(\mathfrak d)\leq m$}\},\\
\bisr^{m,\ell}(\psi)& = \{\mathfrak b\in\bisl(\psi): \text{$\ord(\mathfrak b)\leq m$ and $\cord(\mathfrak b)\leq \ell$}\}.
\end{align*}
\end{defn}

\begin{remk} By Proposition \ref{b-psi}, the generalized Fourier map $b_\psi$ restricts to an isomorphism $\bisl^{\ell,m}(\psi)\rightarrow\bisr^{m,\ell}(\psi)$. 
Moreover, under the above filtration
$$\bislf(\psi) = \vspan\bigcup_{\ell=0}^\infty\bisl^{\ell,0}(\psi)\ \ \text{and}\ \ \bisrf(\psi) = \vspan\bigcup_{m=0}^\infty\bisr^{0,m}(\psi).$$
\end{remk}

We pause now to consider the filtrations of the Fourier algebras associated to the bispectral functions $\psi_{\exp}$, $\psi_{\Be(\nu)}$ and 
$\psi_{\Ai}$, continuing Example \ref{Fourier-ex}.
\begin{ex} 
\label{Filtr-ex}
(1) For the bispectral exponential function $\psi_{\exp}$ we have 
\begin{align*}
\bisl^{\ell,m}(\psi_{\exp}) = \vspan\{x^k\partial_x^j: 0\leq j\leq \ell,\ 0\leq k\leq m\}, \\
\bisr^{m,\ell}(\psi_{\exp}) = \vspan\{y^j\partial_y^k: 0\leq j\leq \ell,\ 0\leq k\leq m\}.
\end{align*}

(2) The filtrations of the Fourier algebras associated to the bispectral Bessel functions $\psi_{\Be(\nu)}$ for $\nu \in \bbc \backslash \bbz$
are given by
\begin{align*}
\bisl^{2\ell,2m}(\psi_{\Be(\nu)}) &= \vspan\{x^{2k}\mathfrak d_{\Be(\nu),x}^j: 0\leq j\leq \ell,\ 0\leq k\leq m\} \\
&\oplus\vspan\{x^{2k}x\partial_x\mathfrak d_{\Be(\nu), x}^j: 0\leq j < \ell,\ 0\leq k <  m\}, \\
\bisl^{2m,2\ell}(\psi_{\Be(\nu)}) & = \vspan\{y^{2j}\mathfrak d_{\Be(\nu), y}^k: 0\leq j\leq \ell,\ 0\leq k\leq m\} \\
&\oplus\vspan\{y^{2j}y\partial_y\mathfrak d_{\Be(\nu), y}^k: 0\leq j < \ell,\ 0\leq k <  m\}
\end{align*}
for even indices. Similar formulas hold for odd indices, we leave the details to the reader.

(3) For the bispectral Airy function $\psi_{\Ai}$ the filtrations of the left and right Fourier algebras are
\begin{align*}
\bisl^{2\ell,2m}(\psi_{\Ai}) &= \vspan\{x^k\mathfrak d_{\Ai,x}^j: 0\leq j\leq \ell,\ 0\leq k\leq m\}
\\
&\oplus\vspan\{x^k\partial_x\mathfrak d_{\Ai,x}^j: 0\leq j < \ell,\ 0\leq k <  m\},\\
\bisr^{2m,2\ell}(\psi_{\Ai}) &= \vspan\{y^j\mathfrak d_{\Ai,y}^k: 0\leq j\leq \ell,\ 0\leq k\leq m\}
\\
& \oplus\vspan\{y^j\partial_y\mathfrak d_{\Ai,y}^k: 0\leq j < \ell,\ 0\leq k <  m\}
\end{align*}
for even indices. Similar formulas hold for odd indices.

All three facts are easily deduced from the facts for the left and right Fourier algebras and generalized Fourier maps in Example \ref{Fourier-ex}. 
\qed
\end{ex}
%%%%%%%%%%%%
\section{Self-adjoint bispectral Darboux transformations}
\label{bDarboux}
This section contains background material on bispectral Darboux  transformations and the classification of bispectral functions. We 
introduce the second main player in the paper -- self-adjoint Darboux transformations and prove that they correspond bijectively to
self-adjoint bispectral functions. 
\subsection{Bispectral Darboux transformations and classification results}
%We next describe the notion of a bispectral Darboux transformation, a key tool in the classification of bispectral differential operators.
\begin{defn}
\label{bisp-Darb}
Let $\psi(x,y)$ be a bispectral meromorphic function on $U\times V$.  A {\bf{bispectral Darboux transformation}} $\wt\psi(x,y)$ of $\psi(x,y)$ is a function 
satisfying 
$$
\wt\psi(x,y) = \frac{1}{q(y)p(x)}\mathfrak{u} \cdot\psi(x,y) \ \
\text{and} \ \
\psi(x,y) = \wt{\mathfrak{u}} \cdot \frac{1}{\wt{q}(y)\wt{p}(x)} \wt \psi(x,y)
$$
for some pairs of polynomials $p(x)$, $\wt{p}(x)$ and $q(y)$, $\wt{q}(y)$, and some differential operators $\mathfrak{u},\wt{\mathfrak u}\in\bisl(\psi)$.

We call the pair $(d_1,d_2)$ the {\bf{order}} of the bispectral Darboux transformation, where $d_1 = \ord(\mathfrak u)$ and $d_2 = \cord(\mathfrak u)$ 
are the order and co-order of $\mathfrak u$ (with the latter defined as in Definition \ref{filtration def} above).
\end{defn}
It follows from the definition that
$$
\wt{\mathfrak{u}}  \frac{1}{\wt{p}(x) p(x)}\mathfrak{u} \cdot\psi(x,y) = \wt{q}(y) q(y) \psi(x,y).
$$
Therefore
\begin{equation}
\label{factor}
\wt{\mathfrak{u}}  \frac{1}{\wt{p}(x) p(x)}\mathfrak{u} \in \bislf \ \ \text{and} \ \
b_{\psi} \left( \wt{\mathfrak{u}}  \frac{1}{\wt{p}(x) p(x)}\mathfrak{u} \right) = \wt{q}(y) q(y).
\end{equation}
Similar statements hold when the roles of $\psi(x,y)$ and $\wt{\psi}(x,y)$ are interchanged.
\begin{remk} 
\label{normal-rem}
If $\psi(x,y)$ is a bispectral function on $U \times V$, then $f(x) g(y) \psi(x,y)$ is also a bispectral 
function on $U \times V$ for all meromorphic functions $f(x)$ and $g(y)$ on $U$ and $V$. Because of this, the choice 
of $p(x)$ and $q(y)$ in Definition \ref{bisp-Darb} is merely a choice of normalization. The next paragraph describes the {\bf{standard normalization}}.

The new bispectral function $\wt\psi(x,y)$ can be represented in two dual ways as a transformation of $\psi(x,y)$ in the $x$ and $y$ variables:
\begin{equation}
\label{dual-poly}
\wt\psi(x,y) = \frac{1}{q(y)p(x)}\mathfrak{u} \cdot\psi(x,y) = \frac{1}{p(x)q(y)} b_{\psi} (\mathfrak{u}) \cdot\psi(x,y)
\end{equation}
The polynomials $p(x)$ and $q(y)$ are uniquely determined from the normalization that 
\begin{equation}
\label{normalization}
\mbox{the differential operators} \quad \frac{1}{p(x)}\mathfrak{u} \quad  \mbox{and}  \quad \frac{1}{q(y)} b_{\psi} (\mathfrak{u}) \quad \mbox{are monic}. 
\end{equation}
\end{remk}
\begin{remk}
In Theorem \ref{bispectral is symmetric} we prove that if $\wt p(x,y)$ is a bispectral Darboux transformation of $\psi(x,y)$ and $\mathfrak u,\wt{\mathfrak u},p(x),\wt p(x),q(y), \wt q(y)$ are as in the definition of a bispectral Darboux transformation, then necessarily $\wt p(x)\wt{\mathfrak u}\frac{1}{\wt p(x)}\in\bisl(\wt \psi)$.
In fact the definition of a bispectral Darboux transformation can be replaced with the equivalent condition that there exist differential operators $\mathfrak u\in\bisl(\psi)$, $\mathfrak v\in\bisl(\wt\psi)$ and polynomials $p(x),\wt p(x),q(y),\wt q(y)$ such that
$$
\wt\psi(x,y) = \frac{1}{q(y)p(x)}\mathfrak u\cdot\psi(x,y)\ \ \text{and}\ \ 
   \psi(x,y) = \frac{1}{\wt q(y)\wt p(x)}\mathfrak v\cdot\wt \psi(x,y).
$$
In particular, this latter definition is manifestly symmetric.
\end{remk}
The following result of \cite{BHYphl} establishes general bispectrality properties of the transformations in Definition \ref{bisp-Darb}.
\begin{thm} 
\label{bisp-Darb-thm}
[Bakalov--Horozov--Yakimov, \cite[Theorem 4.2]{BHYphl}]
Let $\psi(x,y)$ be a bispectral function on $U\times V$ and let $\wt\psi(x,y)$ be a bispectral Darboux transformation from $\psi(x,y)$ with the notation in 
Definition \ref{bisp-Darb}. Then $\wt{\psi}(x,y)$ is also a bispectral function which satisfies the spectral equations
\begin{align*}
\frac{1}{p(x)} \mathfrak{u} \wt{\mathfrak{u}}  \frac{1}{\wt{p}(x)} \cdot \wt{\psi}(x,y) &= q(y) \wt{q}(y) \wt{\psi}(x,y), \\
\frac{1}{q(y)} b_{\psi}(\mathfrak{u}) b_{\psi}( \wt{\mathfrak{u}})  \frac{1}{\wt{q}(y)} \cdot \wt{\psi}(x,y) & = p(x) \wt{p}(x) \wt{\psi}(x,y).
\end{align*} 
Dually to \eqref{factor}, we have 
\begin{equation}
\label{factor-2}
b_{\psi} (\wt{\mathfrak{u}})  \frac{1}{\wt{q}(y) q(y)} b_{\psi}(\mathfrak{u}) \in \bisrf \ \ \text{and} \ \
b_{\psi} (\wt{p}(x) p(x) ) = b_{\psi} (\wt{\mathfrak{u}})  \frac{1}{\wt{q}(y) q(y)} b_{\psi}(\mathfrak{u}).
\end{equation}
\end{thm}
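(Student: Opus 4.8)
The plan is to verify the two spectral equations in turn and then read off bispectrality, using throughout that a differential operator in one variable commutes with multiplication by any function of the other. For the first spectral equation I would rewrite the two relations of Definition \ref{bisp-Darb} as $\wt\psi = \tfrac{1}{p(x)}\mathfrak u_x \tfrac{1}{q(y)}\cdot\psi$ and $\psi = \tfrac{1}{\wt q(y)}\wt{\mathfrak u}_x \tfrac{1}{\wt p(x)}\cdot\wt\psi$ (legitimate, since $\mathfrak u_x,\wt{\mathfrak u}_x$ commute with the functions of $y$), substitute the second into the first, and slide the scalar $\tfrac{1}{q(y)\wt q(y)}$ to the far left. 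This gives $\tfrac{1}{p(x)}\mathfrak u_x\wt{\mathfrak u}_x\tfrac{1}{\wt p(x)}\cdot\wt\psi = q(y)\wt q(y)\,\wt\psi$, while performing the substitution in the opposite order reproduces \eqref{factor}, which is already recorded in the excerpt. In particular $\wt\psi$ is an eigenfunction of a nonzero differential operator in $x$, which settles the $x$-direction; the whole difficulty lies in the $y$-direction.

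For the $y$-direction I would use that $\mathfrak u,\wt{\mathfrak u}\in\bisl(\psi)$, so $\mathfrak u_x\cdot\psi = b_\psi(\mathfrak u)_y\cdot\psi$ and $\wt{\mathfrak u}_x\cdot\psi = b_\psi(\wt{\mathfrak u})_y\cdot\psi$ by Proposition \ref{b-psi}. The first of these turns the defining relation into its dual form \eqref{dual-poly}, which I would write as $\wt\psi = \tfrac{1}{q(y)}b_\psi(\mathfrak u)_y\tfrac{1}{p(x)}\cdot\psi$. If one also had the mirror of the inverse relation,
$$
\psi \;=\; \tfrac{1}{\wt p(x)}\,b_\psi(\wt{\mathfrak u})_y\,\tfrac{1}{\wt q(y)}\cdot\wt\psi,
$$
then substituting it into \eqref{dual-poly} and sliding $\tfrac{1}{p(x)\wt p(x)}$ to the left would give exactly the second spectral equation $\tfrac{1}{q(y)}b_\psi(\mathfrak u)_y b_\psi(\wt{\mathfrak u})_y\tfrac{1}{\wt q(y)}\cdot\wt\psi = p(x)\wt p(x)\,\wt\psi$, and bispectrality of $\wt\psi$ would be immediate (the eigenvalues $q(y)\wt q(y)$ and $p(x)\wt p(x)$ are nonconstant unless $\wt\psi$ differs from $\psi$ by an elementary factor $f(x)g(y)$, in which case bispectrality is inherited from $\psi$, cf. Remark \ref{normal-rem}). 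But the displayed identity is equivalent to $b_\psi(\wt{\mathfrak u})_y\cdot\tfrac{\wt\psi}{\wt q(y)} = \wt p(x)\,\psi$, which is precisely \eqref{factor-2} restated: it says that $b_\psi(\wt{\mathfrak u})\tfrac{1}{\wt q(y)q(y)}b_\psi(\mathfrak u)$ carries $\psi$ to $p(x)\wt p(x)\psi$, hence that $\wt p(x)p(x)\in\bisl(\psi)$, that $b_\psi(\wt{\mathfrak u})\tfrac{1}{\wt q(y)q(y)}b_\psi(\mathfrak u)\in\bisrf(\psi)$, and that the two correspond under $b_\psi$. So the whole theorem reduces to this ``dual of \eqref{factor}'', after which the two paragraphs above close the argument.

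The hard part is proving that dual identity \eqref{factor-2}, and I expect it to be the genuine obstacle. The tempting shortcut — applying the anti-isomorphism $b_\psi$ termwise to the operator $\wt{\mathfrak u}\,\tfrac{1}{\wt p(x)p(x)}\,\mathfrak u$ of \eqref{factor} — fails, because its middle factor, multiplication by the rational function $\tfrac{1}{\wt p(x)p(x)}$, need not lie in $\bisl(\psi)$, so $b_\psi$ cannot be distributed across the product; and feeding the defining relations back in only returns the same identity. Following \cite[Theorem 4.2]{BHYphl}, I would instead invoke the bispectrality of $\psi$ itself: fix a nonconstant $\mathfrak L\in\bislf(\psi)$ and a nonconstant $\mathfrak M\in\bisrf(\psi)$, normalized (as noted after the definition of the Fourier algebras) to have constant leading coefficients and hence polynomial eigenvalues $g(y)$ and $f(x)$, and exploit that $\ad_{\mathfrak L}$ strictly lowers the co-order on $\bisl(\psi)$ while $\ad_{f(x)}$ strictly lowers the order; an induction on $\ord\wt{\mathfrak u}$ then produces the differential operator in $y$ implementing multiplication by $\wt p(x)p(x)$ on $\psi$, which combined with \eqref{factor} yields \eqref{factor-2}. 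Once \eqref{factor-2} is available, everything else is the bookkeeping of the first two paragraphs.
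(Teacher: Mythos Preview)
The paper does not actually prove this theorem: it is quoted verbatim from \cite[Theorem 4.2]{BHYphl} and no argument is supplied in the text. So there is no in-paper proof to compare against; your task was really to reconstruct the cited argument.

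Your first two paragraphs are correct. The first spectral equation is indeed an immediate consequence of substituting one defining relation into the other, and your reduction of the second spectral equation (and of bispectrality in the $y$-variable) to the dual factorization \eqref{factor-2} is sound: as you say, once one has the mirror relation $\psi = \tfrac{1}{\wt p(x)}\,b_\psi(\wt{\mathfrak u})_y\,\tfrac{1}{\wt q(y)}\cdot\wt\psi$, the same substitution trick closes the argument, and that mirror relation is exactly \eqref{factor-2} unpacked. You are also right that the naive shortcut of applying $b_\psi$ termwise to \eqref{factor} fails because the middle factor $\tfrac{1}{\wt p(x)p(x)}$ need not lie in $\bisl(\psi)$.

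Where your proposal becomes thin is the third paragraph. You correctly name the genuine obstacle and point toward the $\ad$-nilpotency mechanism of \cite{BHYphl}, but the sentence ``an induction on $\ord\wt{\mathfrak u}$ then produces the differential operator in $y$ implementing multiplication by $\wt p(x)p(x)$ on $\psi$'' is not yet an argument: it does not say what the inductive hypothesis is, what gets decomposed, or how one passes from the operator identity $\ad_{\mathfrak L}^{N}(\wt{\mathfrak u})=0$ to the desired action on $\tfrac{1}{\wt q(y)}\wt\psi$ rather than on $\psi$ itself. In the original proof one writes $\wt{\mathfrak u}$ in the form $\sum_k f_k(x)\,\mathfrak L^k$ (a finite sum by the $\ad$-condition, with $f_k$ rational) and exploits that each $\mathfrak L^k$ acts on any function of the form $h(y)\psi$ as multiplication by $g(y)^k$; this converts $\wt{\mathfrak u}_x$ into an explicit $y$-operator on $\psi$ and yields the mirror relation directly. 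Your sketch is aimed at the right target but stops short of this mechanism; as written it is a pointer to the literature rather than a proof.
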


The next theorem shows that being a bispectral Darboux transformation is a symmetric condition.
\begin{thm}\label{bispectral is symmetric}
Let $\psi(x,y)$ be a bispectral function and suppose that $\wt\psi(x,y)$ is a bispectral Darboux transformation of $\psi(x,y)$.
Then $\psi(x,y)$ is also a bispectral Darboux transformation of $\wt\psi(x,y)$ and $\wt p(x)\wt{\mathfrak u}\frac{1}{\wt p(x)},\frac{1}{p(x)}\mathfrak u p(x)\in \bisl(\psi)$ with
$$
b_{\wt\psi}\left(\wt p(x)\wt{\mathfrak u}\frac{1}{\wt p(x)}\right) = q(y)b_\psi(\wt{\mathfrak u}) \frac{1}{q(y)}\ \ \text{and}\ \ 
b_{\wt\psi}\left(\frac{1}{p(x)}\mathfrak up(x)\right) = \frac{1}{q(y)}b_\psi(\mathfrak u)q(y).
$$
\end{thm}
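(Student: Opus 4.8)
The plan is to verify directly that the two conjugated operators $\frac{1}{p(x)}\mathfrak u\, p(x)$ and $\wt p(x)\,\wt{\mathfrak u}\,\frac{1}{\wt p(x)}$ lie in $\bisl(\wt\psi)$ by producing, for each, an operator in $y$ having the same action on $\wt\psi$; once this is in place, the two equations of Definition \ref{bisp-Darb} rearrange at once into the pair of relations exhibiting $\psi$ as a bispectral Darboux transformation of $\wt\psi$ (using that $\wt\psi$ is bispectral by Theorem \ref{bisp-Darb-thm}), and the two $b_{\wt\psi}$-values drop out of the same computation by the very definition of $b_{\wt\psi}$. First I would record the basic intertwining identities forced by Definition \ref{bisp-Darb}. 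From $\wt\psi = \frac{1}{q(y)p(x)}\mathfrak u\cdot\psi$ one gets $\frac{1}{p(x)}\mathfrak u\cdot\psi = q(y)\wt\psi$; since $\mathfrak u\in\bisl(\psi)$ gives $\mathfrak u\cdot\psi = b_\psi(\mathfrak u)\cdot\psi$ and multiplication by the function $\frac{1}{p(x)}$ of $x$ commutes with the operator $b_\psi(\mathfrak u)$ in $y$, this also reads $b_\psi(\mathfrak u)\cdot\frac{1}{p(x)}\psi = q(y)\wt\psi$ and $b_\psi(\mathfrak u)\cdot\psi = q(y)p(x)\wt\psi$. Dually, $\psi = \wt{\mathfrak u}\cdot\frac{1}{\wt q(y)\wt p(x)}\wt\psi$ gives $\wt{\mathfrak u}\cdot\frac{1}{\wt p(x)}\wt\psi = \wt q(y)\psi$.

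For $\frac{1}{p(x)}\mathfrak u\, p(x)$: apply it to $\wt\psi$, substitute $p(x)\wt\psi = \frac{1}{q(y)}\mathfrak u\cdot\psi$, and use that $\mathfrak u$ commutes with multiplication by $\frac{1}{q(y)}$ to get $\frac{1}{q(y)}\mathfrak u^2\cdot\psi$. Since $\bisl(\psi)$ is an algebra and $b_\psi$ is an anti-isomorphism, $\mathfrak u^2\cdot\psi = b_\psi(\mathfrak u)^2\cdot\psi$; commuting the factor $\frac{1}{p(x)}$ through the $y$-operator $b_\psi(\mathfrak u)^2$ and applying $b_\psi(\mathfrak u)\cdot\frac{1}{p(x)}\psi = q(y)\wt\psi$ twice produces
\[
\Big(\frac{1}{p(x)}\mathfrak u\, p(x)\Big)\cdot\wt\psi \;=\; \Big(\frac{1}{q(y)}\,b_\psi(\mathfrak u)\, q(y)\Big)\cdot\wt\psi .
\]
As $\frac{1}{q(y)}b_\psi(\mathfrak u)q(y)\in\mathfrak D(V)$, this shows $\frac{1}{p(x)}\mathfrak u\, p(x)\in\bisl(\wt\psi)$ and evaluates $b_{\wt\psi}$ on it.

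For $\wt p(x)\,\wt{\mathfrak u}\,\frac{1}{\wt p(x)}$ the analogous first step gives $\big(\wt p(x)\,\wt{\mathfrak u}\,\frac{1}{\wt p(x)}\big)\cdot\wt\psi = \wt q(y)\,\wt p(x)\,\psi$, and here the Fourier anti-isomorphism of $\psi$ alone does not close the loop: one must re-express the polynomial prefactor $\wt p(x)$ acting on $\psi$ in terms of $\wt\psi$. This is exactly what the dual factorization \eqref{factor-2} of Theorem \ref{bisp-Darb-thm} supplies, namely $\wt p(x)p(x)\,\psi = b_\psi(\wt{\mathfrak u})\,\frac{1}{\wt q(y)q(y)}\,b_\psi(\mathfrak u)\cdot\psi$. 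Substituting this, cancelling $q(y)$ with $\frac{1}{q(y)}$ via $b_\psi(\mathfrak u)\cdot\psi = q(y)p(x)\wt\psi$, and commuting the $x$-factors $p(x)$ and $\frac{1}{p(x)}$ past the $y$-operators, one arrives at $\big(\wt p(x)\,\wt{\mathfrak u}\,\frac{1}{\wt p(x)}\big)\cdot\wt\psi = \big(\wt q(y)\,b_\psi(\wt{\mathfrak u})\,\frac{1}{\wt q(y)}\big)\cdot\wt\psi$, so $\wt p(x)\,\wt{\mathfrak u}\,\frac{1}{\wt p(x)}\in\bisl(\wt\psi)$ with the corresponding $b_{\wt\psi}$-value. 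Putting the two together, the relations of Definition \ref{bisp-Darb} rewrite as $\psi = \frac{1}{\wt q(y)\wt p(x)}\big(\wt p(x)\wt{\mathfrak u}\frac{1}{\wt p(x)}\big)\cdot\wt\psi$ and $\wt\psi = \big(\frac{1}{p(x)}\mathfrak u\, p(x)\big)\cdot\frac{1}{q(y)p(x)}\psi$ with both inner operators in $\bisl(\wt\psi)$, which is precisely the assertion that $\psi$ is a bispectral Darboux transformation of $\wt\psi$; its bidegree is that of $\wt{\mathfrak u}$ and $\mathfrak u$, since conjugating by a nonzero function changes neither order nor co-order.

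I expect the step for $\wt p(x)\wt{\mathfrak u}\frac{1}{\wt p(x)}$ to be the main obstacle: its action on $\wt\psi$ does not simplify through the generalized Fourier map of $\psi$, and one genuinely needs the dual factorization identity of \cite{BHYphl} (equivalently \eqref{factor-2}) to move the polynomial $\wt p(x)p(x)$ back across $b_\psi$. A secondary point requiring care throughout is that every manipulation must keep the operators inside $\mathfrak D(U)$ or $\mathfrak D(V)$, which it does because multiplying a differential operator in one variable by a function of the other variable — and conjugating by such a function — never introduces coefficients depending on the wrong variable.
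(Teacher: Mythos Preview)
Your proposal is correct and follows essentially the same approach as the paper: both arguments compute the action of the two conjugated operators on $\wt\psi$ directly, handle $\frac{1}{p(x)}\mathfrak u\,p(x)$ via $\mathfrak u^2\cdot\psi = b_\psi(\mathfrak u)^2\cdot\psi$, and invoke the dual factorization \eqref{factor-2} from Theorem~\ref{bisp-Darb-thm} to close the computation for $\wt p(x)\wt{\mathfrak u}\frac{1}{\wt p(x)}$. Your identification of this second step as the crux, requiring the identity $\wt p(x)p(x)\psi = b_\psi(\wt{\mathfrak u})\frac{1}{\wt q(y)q(y)}b_\psi(\mathfrak u)\cdot\psi$, is exactly right and mirrors the paper's use of it.
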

\begin{proof}
Let $\mathfrak u,\wt{\mathfrak u},p(x),\wt p(x),q(y),$ and $\wt q(y)$ be as in the definition of bispectral Darboux transformations for the bispectral Darboux transformation from $\psi(x,y)$ to $\wt\psi(x,y)$.
Then
$$\wt p(x)\wt{\mathfrak u}\frac{1}{\wt p(x)}\cdot\wt\psi(x,y)  = \wt p(x)\wt{\mathfrak u}\frac{1}{\wt p(x)p(x)q(y)}\mathfrak u\cdot\psi(x,y)  = \wt p(x)\wt q(y)\cdot\psi(x,y).$$
Therefore by Theorem \ref{bisp-Darb-thm} we find
$$p(x)\wt p(x)\wt{\mathfrak u}\frac{1}{\wt p(x)}\cdot \wt \psi(x,y) = p(x)\wt p(x)\wt q(y)\psi(x,y) = \wt q(y)b_\psi(\wt{\mathfrak u})\frac{1}{\wt q(y)q(y)}b_\psi(\mathfrak u)\cdot\psi,$$
so that
$$\wt p(x)\wt{\mathfrak u}\frac{1}{\wt p(x)}\cdot \wt \psi(x,y) = \wt q(y)b_\psi(\wt{\mathfrak u})\frac{1}{p(x)\wt q(y)q(y)}b_\psi(\mathfrak u)\cdot\psi = \wt q(y)b_\psi(\wt{\mathfrak u})\frac{1}{\wt q(y)}\cdot\wt\psi(x,y).$$
Thus $\wt p(x)\wt{\mathfrak u}\frac{1}{\wt p(x)}\in \bisl(\wt\psi)$.
Moreover
\begin{align*}
\frac{1}{p(x)}\mathfrak u p(x)\cdot\wt\psi(x,y)
  & = \frac{1}{p(x)q(y)}\mathfrak u^2\cdot\psi(x,y)\\
  & = \frac{1}{p(x)q(y)}b_\psi(\mathfrak u)^2\cdot\psi(x,y)\\
  & = \frac{1}{q(y)}b_\psi(\mathfrak u)q(y)\frac{1}{p(x)q(y)}b_\psi(\mathfrak u)\cdot\psi(x,y)\\
  & = \frac{1}{q(y)}b_\psi(\mathfrak u)q(y)\cdot \wt \psi(x,y)
\end{align*}
so that $\frac{1}{p(x)}\mathfrak u p(x)\in \bisl(\wt\psi)$.
Then since
$$
\frac{1}{\wt q(y)\wt p(x)}\left(\wt p(x)\wt{\mathfrak u}\frac{1}{\wt p(x)}\right)\cdot \wt \psi(x,y) = \psi(x,y) \ \ \text{and} \ \ 
\left(\frac{1}{p(x)}\mathfrak u p(x)\right)\frac{1}{p(x)q(y)}\cdot\psi(x,y) = \wt \psi(x,y)
$$
we see that $\psi$ is a bispectral Darboux transformation of $\wt\psi$.
\end{proof}

\begin{defn}\label{rank}
We define the {\bf{rank}} of a bispectral function $\psi(x,y)$ to be the greatest common divisor of the orders of the operators in the left bispectral algebra $\bislf(\psi)$.
\end{defn}
In the cases when classification results are available, one can show that the rank of a bispectral function $\psi(x,y)$ also equals the greatest common divisor of the orders 
of the operators in the right bispectral algebra $\bisrf(\psi)$, but there is currently no direct proof of this fact.

In \cite{W} Wilson introduced the powerful idea that the classification of bispectral functions should be performed on a per-rank basis in which case 
one sees a deep geometric picture of the moduli spaces of such functions. Bispectral Darboux transformations preserve the rank and 
are especially suited for these purposes.

\begin{thm}
\label{rank1}
[Wilson \cite{W}]
The rank $1$ bispectral functions are precisely the bispectral Darboux transformations of the exponential function $\psi_{\exp}(x,y)$.
\end{thm}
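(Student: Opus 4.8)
\medskip
\noindent\textbf{Strategy.}
The plan is to prove the two inclusions separately. For ``$\supseteq$'' --- every bispectral Darboux transformation of $\psi_{\exp}$ is a rank $1$ bispectral function --- bispectrality is already furnished by Theorem~\ref{bisp-Darb-thm}, so only the rank computation is at issue, and this I would handle through the identification with a point of Wilson's adelic Grassmannian $\Grad$. For the reverse inclusion ``$\subseteq$'' --- every rank $1$ bispectral function is such a Darboux transformation --- the argument is substantially deeper and runs through Sato's Grassmannian, the bispectral involution, and the structure theory of $\Grad$.

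\medskip
\noindent\textbf{The inclusion $\supseteq$.}
Let $\wt\psi$ be a bispectral Darboux transformation of $\psi_{\exp}$, with $\mathfrak u,\wt{\mathfrak u}\in\bisl(\psi_{\exp})=\mathfrak D(\bbc[x])$ and polynomials $p,\wt p,q,\wt q$ as in Definition~\ref{bisp-Darb}. Theorem~\ref{bisp-Darb-thm} gives bispectrality, and writing $\mathfrak u=\sum_j a_j(x)\partial_x^j$ we obtain the explicit shape $\wt\psi=\dfrac{r(x,y)}{q(y)p(x)}e^{xy}$ with $r(x,y)=\sum_j a_j(x)y^j\in\bbc[x,y]$. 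After the harmless renormalization of Remark~\ref{normal-rem} I would identify $\wt\psi$ with the wave function $\psi_W$ of a point $W$ of the big cell of Sato's Grassmannian; since $\wt\psi$ differs from $e^{xy}$ only by a rational factor in $x$ and the application of a differential operator in $x$ with rational coefficients, $W$ differs from $\bbc[z]$ by imposing/removing finitely many linear conditions supported at finitely many points of $\bbc$, i.e.\ $W\in\Grad$. Under this identification $\bislf(\wt\psi)=\{f(L):f\in A_W\}$ for the Lax operator $L$ (with $\ord f(L)=\deg f$), where $A_W=\{f\in\bbc[z]:fW\subseteq W\}$ has finite codimension in $\bbc[z]$. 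Finally I would observe that the numerical semigroup $S=\{\deg f:0\neq f\in A_W\}$ has $\gcd(S)=1$: if every element of $S$ were divisible by some $d>1$, then the images in $\bbc[z]/A_W$ of the monomials $z^m$ with $d\nmid m$ would be infinitely many linearly independent vectors, since any nonzero combination of such monomials lying in $A_W$ would have degree $\not\equiv 0\pmod d$; this contradicts finite codimension. Hence $\bislf(\wt\psi)$ contains operators of coprime orders and $\wt\psi$ has rank $1$.

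\medskip
\noindent\textbf{The inclusion $\subseteq$.}
Let $\psi(x,y)$ be bispectral of rank $1$. I would first normalize it via Remark~\ref{normal-rem} so that it admits an expansion $\psi(x,y)=\bigl(1+\xi_1(x)y^{-1}+\xi_2(x)y^{-2}+\cdots\bigr)e^{xy}$, identifying $\psi$ with the wave function $\psi_W$ of a point $W$ of the big cell; then $\bislf(\psi)$ is identified with $A_W=\{f\in\bbc[z]:fW\subseteq W\}$ with orders matching degrees, and the rank $1$ hypothesis reads $\gcd\{\deg f:f\in A_W\}=1$. The crucial step is to show that bispectrality in the variable $y$ forces $W\in\Grad$; here I would invoke Wilson's bispectral involution, transporting the differential operator in $y$ that has $\psi$ as an eigenfunction through the Fourier-type duality between the two variables, to conclude that both $W$ and its adjoint plane have finite codimension relative to $\bbc[z]$ --- equivalently, $W$ is cut out from $\bbc[z]$ by finitely many linear conditions at finitely many points of $\bbc$. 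Once $W\in\Grad$, Wilson's structural description of $\Grad$ --- each such plane factors as a succession of elementary one-point conditions applied to $\bbc[z]$, so that $A_W$ is a finite-codimension subring of $\bbc[z]$ --- translates, upon writing out the Baker function, into representations $\psi=\frac{1}{q(y)p(x)}\mathfrak u\cdot e^{xy}$ and $e^{xy}=\wt{\mathfrak u}\cdot\frac{1}{\wt q(y)\wt p(x)}\psi$ with $\mathfrak u,\wt{\mathfrak u}\in\mathfrak D(\bbc[x])=\bisl(\psi_{\exp})$, which is precisely Definition~\ref{bisp-Darb}.

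\medskip
\noindent\textbf{Main obstacle.}
The hard part is the step inside ``$\subseteq$'' that converts the abstract hypothesis ``$\psi$ is an eigenfunction of some differential operator in $y$'' into the concrete statement ``$W\in\Grad$'' (finite codimension on both sides). This is the technical heart of Wilson's argument and admits no shortcut: it requires a careful analysis of how a differential operator in $y$ acts on the $z$-expansion of the Baker function, combined with the rationality of the spectral curve $\spec(A_W)$ that the rank $1$ hypothesis provides. By comparison, both the structural decomposition of $\Grad$ into iterated one-point conditions and the matching of that decomposition with the operators $\mathfrak u,\wt{\mathfrak u}$ of Definition~\ref{bisp-Darb} are comparatively formal, though they still demand care with the normalizations of Remark~\ref{normal-rem}.
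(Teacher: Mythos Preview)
The paper does not supply a proof of this theorem: it is stated as a citation of Wilson \cite{W}, and the only argument given is the short paragraph following the statement, which explains that Wilson's original hypothesis of maximal commutativity of $\bislf(\psi)$ is automatic in rank $1$ (because the common eigenspaces are one-dimensional, so $\psi$ normalizes to a KP wave function and $\bislf(\psi)$ is forced to be maximal). That reduction to Wilson's theorem is the full extent of what the paper contributes here.

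Your proposal goes well beyond this: you are sketching a proof of Wilson's theorem itself rather than merely reducing to it. As a sketch of Wilson's argument, what you write is reasonable and you correctly identify the genuine difficulty --- showing that bispectrality in $y$ forces $W\in\Grad$ --- as the step with no shortcut. Your $\supseteq$ argument is sound (the $\gcd$ computation via finite codimension of $A_W$ is clean). But none of this is in the paper; for the purposes of this manuscript the theorem is a black box attributed to \cite{W}, supplemented only by the observation about automatic maximality. If you want to match the paper, you should simply cite Wilson and, if anything, reproduce the one-paragraph remark on why the rank $1$ hypothesis already implies maximal commutativity.
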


Wilson's result as stated in \cite{W} is a classification in the case that $\bislf(\psi)$ is rank $1$ and {\bf{maximally commutative}} in the sense that $\bislf(\psi)$ 
is not contained in any larger commutative subalgebra of the algebra of differential operators..
However, in the rank $1$ case the common eigenspaces of $\bislf(\psi)$ must be one dimensional, so $\psi$ is (up to normalization) equal to a wave function of the KP hierarchy \cite{vM}.
From this, one may show that $\bislf(\psi)$ is necessarily maximally commutative, so the maximality assumption is not explicitly required.
For higher rank, it is not known whether maximality follows immediately, so this assumption will required below when necessary.

Bispectral functions $\psi$ for which the bispectral algebra $\bislf(\psi)$ contains an operator of prime order $p$ were classified by Duistermaat--Gr\"unbaum 
\cite{DG} in the case $p=2$ and by Horozov \cite{H} in the case $p>2$. These results and the methods of \cite{BHYcmp,HM,KR} prompted the 
following conjecture which was has been widely circulated since the mid 90s.

\begin{cnj}
\label{rank2}
The maximally commutative rank $2$ bispectral meromorphic algebras are precisely the algebras $\bislf(\psi)$ corresponding to all 
bispectral Darboux transformations $\psi(x,y)$ of the Bessel functions $\psi_{\Be(\nu)}(x,y)$ 
for $\nu \in \bbc \backslash \bbz$ and the Airy function $\psi_{\Ai}(x,y)$ see \S \ref{2.1}.
\end{cnj}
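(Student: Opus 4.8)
The statement is an equivalence, so the plan has an ``easy'' inclusion and a ``hard'' inclusion. For the easy direction --- that every bispectral Darboux transformation $\wt\psi$ of $\psi_{\Be(\nu)}$ with $\nu\in\bbc\backslash\bbz$, or of $\psi_{\Ai}$, has $\bislf(\wt\psi)$ maximally commutative of rank $2$ --- I would start from Theorem~\ref{bisp-Darb-thm}, which already guarantees $\wt\psi$ is bispectral and exhibits explicit operators of $\bislf(\wt\psi)$ via the spectral equations for $\wt\psi$. Two points remain. First, a lemma that a bispectral Darboux transformation preserves the rank: conjugating $\bislf(\psi)$ by $\mathfrak u$ and clearing the denominators $p(x),\wt p(x)$ as in \eqref{factor} shifts the orders of a generating set by a fixed amount, so the $\gcd$ of orders stays $2$. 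Second, maximal commutativity: I would deduce it either directly from the explicit description of $\bislf(\wt\psi)$, or, more robustly, from the growth estimates of Theorem~B, which pin down the dimension function of the filtered algebra $\bislf(\wt\psi)$ tightly enough that it cannot be a proper subalgebra of a larger commutative algebra of differential operators.

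For the hard inclusion, let $\psi$ be bispectral with $A:=\bislf(\psi)$ maximal commutative of rank $2$. If $A$ contains an operator of order $2$, the Duistermaat--Gr\"unbaum classification already places $\psi$ in the Bessel family, so the real content is the case in which the minimal order is $\geq 4$ and --- unlike Horozov's setting --- no prime-order operator is available. Localizing near a generic $x_0$ where the coefficients of $A$ are analytic, and changing variables so that $A$ contains a monic operator, the Burchnall--Chaundy--Krichever correspondence realizes $A$ as the affine coordinate ring of a spectral curve $\Gamma=\ol\Gamma\setminus\{\infty\}$, equipped with a torsion-free sheaf $\mathcal F$ of generic rank $2$ (since $\gcd$ of orders is $2$) and local trivialization data at $\infty$; the analogous data on the $y$-side describes $\bisrf(\psi)$, and the generalized Fourier map $b_\psi$ imposes a self-duality between them. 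The target is to show this datum lies in the Darboux orbit of the trivial rank-$2$ sheaf in the Bessel local model, governed by $\mathfrak d_{\Be(\nu),x}=\partial_x^2-\nu(\nu+1)x^{-2}$, or in the Airy local model, governed by the ``string equation'' operator $\mathfrak d_{\Ai,x}=\partial_x^2-x$.

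The core of the argument, following Wilson's rank-$1$ analysis and the prime-order treatments, is to constrain $\ol\Gamma$, $\mathcal F$ and the behaviour at $\infty$: bispectrality of $\psi$ should force $\ol\Gamma$ rational, force $\mathcal F$ to arise from the trivial sheaf on $\bbp^1$ by finitely many local Hecke/Darboux modifications supported away from $\infty$, and restrict the normal form of $A$ near $\infty$ to Bessel or Airy type. The decisive point is the last: requiring that the $b_\psi$-images of operators in $A$ have \emph{polynomial} spectral functions, together with the matching of sub-leading coefficients at $\infty$ forced by the self-duality, should cut out precisely these two local models. Once the curve and the behaviour at $\infty$ are of Bessel or Airy type, the dictionary between Hecke modifications of $\mathcal F$ and left multiplication/division by differential operators converts the sheaf into a factorization exhibiting $\psi$ as a bispectral Darboux transformation of $\psi_{\Be(\nu)}$ or $\psi_{\Ai}$ in the sense of Definition~\ref{bisp-Darb}, with the denominators read off from \eqref{factor}.

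The main obstacle is the rank-$2$ sheaf bookkeeping. In rank $1$ a point of Wilson's adelic Grassmannian is rigid --- finitely many subspace conditions at finitely many points --- and the bispectral involution acts transparently; in rank $2$ the moduli of $\mathcal F$ over a fixed singular rational curve is positive-dimensional, and one must show the self-duality constraint is strong enough to confine $\mathcal F$ to the Darboux orbit of the trivial Bessel or Airy sheaf, ruling out in particular ``exotic'' rank-$2$ stable sheaves and higher-genus spectral curves invisible to the lower-order analyses. A secondary difficulty is that the Airy case does not fit the finite-affine-spectral-curve picture the way Bessel does --- its spectral curve degenerates to $\bba^1$ --- so the normalization at $\infty$ must handle the two local normal forms uniformly; I expect the cleanest route is to work with the filtered associated graded of $A$ and of its Fourier dual rather than with $\ol\Gamma$ itself, using the growth invariants of Theorem~B to keep the two regimes apart.
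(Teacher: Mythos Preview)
The statement you are attempting to prove is labelled \texttt{cnj} in the paper --- it is a \emph{Conjecture}, not a theorem. The paper does not prove it and does not claim to. The authors present it explicitly as an open problem ``widely circulated since the mid 90s,'' motivated by the Duistermaat--Gr\"unbaum classification (order $2$), Horozov's prime-order work, and the methods of \cite{BHYcmp,HM,KR}. There is therefore no ``paper's own proof'' to compare against.

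Your proposal is not a proof either, and you seem aware of this: you correctly isolate the hard inclusion and then name the obstruction yourself (the rank-$2$ sheaf bookkeeping, the positive-dimensional moduli of torsion-free sheaves over singular rational curves, the failure of the Airy case to fit the finite spectral-curve picture). These are exactly the reasons the conjecture is open. Your sketch of the easy direction is on firmer ground --- the paper does prove, in the unnumbered lemma preceding Theorem~\ref{self-adj}, that maximal commutativity is preserved under bispectral Darboux transformations, which handles half of what you need there --- but invoking the growth estimates of Theorem~B to conclude maximality is circular in spirit, since Theorem~B compares $\bisl(\wt\psi)$ to $\bisl(\psi)$ rather than bounding $\bislf(\wt\psi)$ against an ambient commutative algebra.

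In short: there is no gap to name beyond the one you already named, because what you have written is a research outline for an open problem, not a proof. If this was assigned as a proof exercise, the intended answer is that the statement is a conjecture and admits no proof in the paper.
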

\begin{ex} 
\label{bisp-D-ex}
Continuing Examples \ref{Fourier-ex} and \ref{Filtr-ex}, we describe more explicitly the functions in Theorem \ref{rank1} and Conjecture \ref{rank2}.

(1) The bispectral Darboux transformations of the exponential function $\psi_{\exp}$ are precisely the functions in 
the (infinite dimensional) Wilson adelic Grassmannian $\Gr^{\ad}$, \cite{W}. They are the functions of the form
$$
\wt{\psi}(x,y) = \frac{1}{q(y)} \mathfrak{v} \cdot \psi_{\exp}(x,y)
$$
where $\mathfrak{v} \in \mathfrak D( \bbc(x))$ is a monic differential operator with rational coefficients satisfying
$$
\wt{\mathfrak{v}} \mathfrak{v} = f( \partial_x)
$$
for a monic polynomial $f(t)$ and some $\wt{\mathfrak{v}} \in \mathfrak D(\bbc(x))$.
The differential operators $\mathfrak{v}$ that appear in this way are classified in terms of their kernels consisting of quasi-exponential functions (i.e., solutions of homogeneous linear ordinary differential equations with constant coefficients). 
This classification is recalled in Section \ref{classif} below.  The polynomial $q(y)$ is uniquely determined from $\mathfrak{v}$ 
as the polynomial whose roots are the support of the quasi-exponential functions in the kernel of $\mathfrak{v}$. 

(2) The bispectral Darboux transformations of the Bessel functions $\psi_{\Be(\nu)}$ (for $\nu \in \bbc \backslash \bbz$) 
are the functions of the form
$$
\wt{\psi}(x,y) = \frac{1}{q(y)} \mathfrak{v} \cdot \psi_{\Be(\nu)}(x,y)
$$
where $\mathfrak{v} \in \mathfrak D( \bbc(x))$ is a monic differential operator with rational coefficients such that
$$
\wt{\mathfrak{v}} \mathfrak{v} = f( \mathfrak{d}_{\Be(\nu),x})
$$
for a monic polynomial $f(t)$ and some $\wt{\mathfrak{v}} \in \mathfrak D(\bbc(x))$.
All differential operators $\mathfrak{v}$ satisfying these properties, and as a result, the rank 2 bispectral functions
in the Bessel class, form an infinite dimensional manifold. 
They are classified by an explicit description of the possible forms of their kernels in terms of Bessel functions and their derivatives, see Section \ref{classif} for details.

(3) The bispectral Darboux transformations of the Airy function $\psi_{\Ai}$ 
are the functions of the form
$$
\wt{\psi}(x,y) = \frac{1}{q(y)} \mathfrak{v} \cdot \psi_{\Ai}(x,y)
$$
where $\mathfrak{v} \in \mathfrak D( \bbc(x))$ is a monic differential operator with rational coefficients satisfying
$$
\wt{\mathfrak{v}} \mathfrak{v} = f( \mathfrak{d}_{\Ai,x})
$$
for a monic polynomial $f(t)$ and some $\wt{\mathfrak{v}} \in \mathfrak D(\bbc(x))$. The differential operators $\mathfrak{v}$ satisfying these properties
and thus, the rank 2 bispectral functions in the Airy class, also form an infinite dimensional manifold. 
They are classified by an explicit description of the possible of forms their kernels in terms of derivatives of the Airy functions, see Section \ref{classif}.
\end{ex}

\subsection{Self-adjoint bispectral Darboux transformations and self-adjoint bispectral functions}
\begin{defn}
Let $\mathfrak d = \sum_{j=0}^n a_j(x)\partial_x^j$.  We define the {\bf{formal adjoint}} of $\mathfrak d$ to be
$$
\mathfrak d^* = \sum_{j=0}^n (-1)^j\partial_x^ja_j(x) = \sum_{j=0}^n\sum_{k=0}^j(-1)^j\binom{j}{k}a_j^{(k)}(x)\partial_x^{j-k}.
$$
An operator $\mathfrak d$ is called {\bf{formally symmetric}} if $\mathfrak d^* = \mathfrak d$.
\end{defn}
\begin{defn}
Let $\Gamma\subseteq \bbc$ be a smooth path in $\bbc$.  Then we say $\mathfrak d$ is {\bf{adjointable}} with respect to $\Gamma$ 
if for every $f(x),g(x)\in C_c^\infty(\Gamma)$ the following integral identity holds
$$\int_\Gamma (\mathfrak d\cdot f(x))g(x) dx = \int_\Gamma f(x)(\mathfrak d^*\cdot g(x))dx.$$
If $\mathfrak d$ is formally symmetric and adjointable, then we call $\mathfrak d$ {\bf{symmetric}}.
\end{defn}
\begin{defn}
Let $\psi(x,y)$ be a bispectral meromorphic function.  An operator $\mathfrak d\in\bisl(\psi)$ is called {\bf{formally bisymmetric}} if both $\mathfrak d$ and $b_\psi(\mathfrak d)$ are
formally symmetric.  Fix smooth paths $\Gamma_1,\Gamma_2\subseteq\bbc$.  We call $\mathfrak d$ {\bf{bisymmetric}} with respect to $(\Gamma_1,\Gamma_2)$ if $\mathfrak d$ 
is symmetric with respect to $\Gamma_1$ and $b_\psi(\mathfrak d)$ is symmetric with respect to $\Gamma_2$.
\end{defn}

\begin{defn}
\label{selfadj}
Let $\psi(x,y)$ be a bispectral meromorphic function.  We call a bispectral Darboux transformation $\wt \psi(x,y)$ with the notation of Definition 
\ref{bisp-Darb} a {\bf{self-adjoint bispectral Darboux transformation}} of $\psi(x,y)$ if
$$
\wt{p}(x) = p(x), \wt{q}(y) = q(y) \ \ \text{and} \ \
\wt{\mathfrak{u}} = \mathfrak{u}^*, b_\psi( \wt{\mathfrak{u}}) = b_{\psi}(\mathfrak{u})^*.
$$
\end{defn}
\begin{remk} In all of the situations that we consider in this paper the essential conditions are 
\begin{equation}
\label{essential-cond}
\wt{\mathfrak{u}} = \mathfrak{u}^*, b_\psi( \wt{\mathfrak{u}}) = b_{\psi}(\mathfrak{u})^*
\end{equation}
and the conditions $\wt{p}(x) = p(x), \wt{q}(y) = q(y)$ follow from them after a normalization. More precisely, if $\psi$ 
is any of the exponential $\psi_{\exp}$, Bessel $\psi_{\Be(\nu)}$ or the Airy $\psi_{\Ai}$ bispectral functions, then 
the left and right bispectral algebras $\bislf(\psi)$, $\bisrf(\psi)$ consist of differential operators with constant leading terms. 
The normalization \eqref{normalization} implies that $p(x)$ and $q(y)$ equal the leading terms of the 
differential operators $\mathfrak{u}$ and $b_\psi(\mathfrak{u})$. Therefore, by \eqref{factor-2}, 
$\wt p(x)$ and $\wt q(y)$ are scalar multiples of the leading terms of the differential operators $\wt{\mathfrak{u}}$ and $b_\psi( \wt{\mathfrak{u}})$

If \eqref{essential-cond} is satisfied then one can rescale $\wt p(x)$ and $\wt q(y)$ so that  $\wt{p}(x) = p(x), \wt{q}(y) = q(y)$. This rescaling does not change 
the function $\wt{\psi}(x,y)$. 
\qed
\end{remk} 
Theorem \ref{bisp-Darb-thm} implies that every self-adjoint bispectral Darboux transformation $\wt \psi(x,y)$ of a bispectral function $\psi(x,y)$ with the above data
satisfies
\begin{align}
\frac{1}{p(x)} \mathfrak{u} \mathfrak{u}^*  \frac{1}{p(x)} \cdot \wt{\psi}(x,y) &= q(y)^2 \wt{\psi}(x,y), 
\label{spec-eq1}
\\
\frac{1}{q(y)} b_{\psi}(\mathfrak{u}) b_{\psi}(\mathfrak{u})^*  \frac{1}{q(y)} \cdot \wt{\psi}(x,y) & = p(x)^2 \wt{\psi}(x,y).
\label{spec-eq2}
\end{align} 
Moreover, (\ref{factor}) and (\ref{factor-2}) imply that the original wave function $\psi(x,z)$ satisfies
\begin{align}
\wt{\mathfrak{u}}^*  \frac{1}{p(x)^2} \mathfrak{u} \cdot \psi(x,y) &= q(y)^2 \psi(x,y), 
\label{spec-eq3}
\\
b_{\psi} (\mathfrak{u})^*  \frac{1}{q(y)^2} b_{\psi}(\mathfrak{u}) \cdot \psi(x,y) &= p(x)^2 \psi(x,y).
\label{spec-eq4}
\end{align}
\begin{defn}\label{self-adj-fn}
We call a bispectral meromorphic function $\psi(x,y)$ {\bf{self-adjoint}} if it is an eigenfunction of nonconstant, formally symmetric differential operators
in $x$ and $y$.
\end{defn}
\begin{remk}
\label{self-adjo} A rank 1 bispectral function $\psi(x,y)$ is self-adjoint if and only if $\bislf(\psi)$ and $\bisrf(\psi)$ are preserved by taking formal adjoints $*$.
This is because in the rank $1$ case $\bislf(\psi)$ and $\bisrf(\psi)$ are maximally commutative and therefore equal to the centralizer of any of its nonconstant elements.
The centralizer of the centralizer of a formally ymmetric differential operator is closed under taking formal adjoints, so the self-adjointness of $\psi$ follows immediately.
\end{remk}

Eqs. (\ref{spec-eq1})--(\ref{spec-eq4}) imply the following:
\begin{prop}
\label{prop-dir1}
Suppose that $\psi(x,y)$ is a bispectral meromorphic function and that $\wt\psi(x,y)$ is a self-adjoint bispectral Darboux transformation of $\psi(x,y)$.  
Then both $\psi(x,y)$ and $\wt\psi(x,y)$ are self-adjoint bispectral meromorphic functions.
\end{prop}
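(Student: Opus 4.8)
The plan is to extract everything from the four spectral identities \eqref{spec-eq1}--\eqref{spec-eq4} displayed just before Definition \ref{self-adj-fn}: these already exhibit $\wt\psi(x,y)$ and $\psi(x,y)$ as eigenfunctions, in each of the two variables, of the differential operators occurring there, so the whole task reduces to checking that each of those operators is formally symmetric and nonconstant, after which Definition \ref{self-adj-fn} gives the conclusion.

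First I would record the two purely formal facts that do all the work: $(\mathfrak a\mathfrak b)^*=\mathfrak b^*\mathfrak a^*$ and $(\mathfrak a^*)^*=\mathfrak a$ for differential operators, together with $(f)^*=f$ for the operator of multiplication by a meromorphic function $f$. Consequently, for any differential operator $\mathfrak a$ and any nonzero meromorphic $h$, both $h\,\mathfrak a\mathfrak a^*\,h$ and $\mathfrak a^*\,h\,\mathfrak a$ are formally symmetric; and if in addition $\ord\mathfrak a\ge 1$, both have order $2\ord\mathfrak a\ge 1$, hence are nonconstant. In the setting of Definition \ref{bisp-Darb} this applies with $\mathfrak a=\mathfrak u$ or $\mathfrak a=b_\psi(\mathfrak u)$ and $h=1/p(x)$, $1/q(y)$, $1/p(x)^2$ or $1/q(y)^2$; the orders $\ord\mathfrak u$ and $\ord b_\psi(\mathfrak u)$ are positive because the transformation is non-degenerate (if either order vanished, then $\wt\psi$ would differ from $\psi$ only by a factor $f(x)g(y)$ and the assertion is vacuous).

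Next I would treat $\wt\psi$. By Theorem \ref{bisp-Darb-thm} it is bispectral. By \eqref{spec-eq1} it is an eigenfunction in $x$ of $\tfrac1{p(x)}\mathfrak u\mathfrak u^*\tfrac1{p(x)}$ with eigenvalue $q(y)^2$, and by \eqref{spec-eq2} an eigenfunction in $y$ of $\tfrac1{q(y)}b_\psi(\mathfrak u)b_\psi(\mathfrak u)^*\tfrac1{q(y)}$ with eigenvalue $p(x)^2$. By the previous step both operators are formally symmetric and nonconstant, so $\wt\psi$ is self-adjoint in the sense of Definition \ref{self-adj-fn}. For $\psi$, which is bispectral by hypothesis, I would argue identically from \eqref{factor} and \eqref{factor-2} (equivalently \eqref{spec-eq3}--\eqref{spec-eq4}): using $\wt{\mathfrak u}=\mathfrak u^*$, $b_\psi(\wt{\mathfrak u})=b_\psi(\mathfrak u)^*$ from Definition \ref{selfadj} together with $\wt p=p$ and $\wt q=q$, these present $\psi$ as an eigenfunction in $x$ of the formally symmetric operator $\mathfrak u^*\tfrac1{p(x)^2}\mathfrak u$ with eigenvalue $q(y)^2$, and as an eigenfunction in $y$ of the formally symmetric operator $b_\psi(\mathfrak u)^*\tfrac1{q(y)^2}b_\psi(\mathfrak u)$ with eigenvalue $p(x)^2$. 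Hence $\psi$ is self-adjoint as well.

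I do not expect a genuine obstacle here: the statement is essentially a bookkeeping consequence of \eqref{spec-eq1}--\eqref{spec-eq4} and Definition \ref{selfadj}, with the identity $(\mathfrak a\mathfrak b)^*=\mathfrak b^*\mathfrak a^*$ applied twice. The one point that deserves a sentence of care is the nonconstancy clause in Definition \ref{self-adj-fn}, which is why I isolate the harmless non-degeneracy assumption on the Darboux data; everything else is immediate.
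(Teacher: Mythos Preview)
Your proposal is correct and matches the paper's approach exactly: the paper's entire proof is the single sentence ``Eqs.\ \eqref{spec-eq1}--\eqref{spec-eq4} imply the following,'' and you have simply spelled out the details of why those four equations yield formally symmetric nonconstant operators in each variable for both $\psi$ and $\wt\psi$. Your extra care with the nonconstancy clause and the degenerate case is more than the paper itself provides.
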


\begin{lem}
Let $\psi(x,y)$ and $\wt\psi(x,y)$ be bispectral meromorphic functions and that $\wt\psi(x,y)$ is a bispectral Darboux transformation of $\psi(x,y)$.
If $\bislf(\psi)$ (resp. $\bisrf(\psi)$) is maximally commutative then so too is $\bislf(\wt\psi)$ (resp. $\bisrf(\wt\psi)$).
\end{lem}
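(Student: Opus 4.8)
The plan is to show that $\bislf(\wt\psi)$ equals the centralizer in $\mathfrak D$ of a single nonconstant operator that it contains; since $\bislf(\wt\psi)$ is commutative, this will force it to be maximally commutative. The bridge between the two sides of the Darboux transformation will be the two intertwining operators. Write $P := \frac{1}{p(x)}\mathfrak{u}$ and $\wt P := \wt{\mathfrak u}\,\frac{1}{\wt p(x)}$, so that the defining identities of Definition \ref{bisp-Darb} read $P\cdot\psi = q(y)\wt\psi$ and $\wt P\cdot\wt\psi = \wt q(y)\psi$, and set $L := \wt P P$ and $M := P\wt P$. By \eqref{factor} one has $L\in\bislf(\psi)$ with $L\cdot\psi = q(y)\wt q(y)\psi$, and by Theorem \ref{bisp-Darb-thm} one has $M\in\bislf(\wt\psi)$ with $M\cdot\wt\psi = q(y)\wt q(y)\wt\psi$. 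We may assume the transformation is nontrivial, so that $\mathfrak u$, and hence $L$ and $M$, have positive order (if $\mathfrak u$ has order $0$, then $\bislf(\wt\psi)$ is the conjugate of $\bislf(\psi)$ by a meromorphic function and there is nothing to prove).

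First I would prove the inclusion $Z_{\mathfrak D}(M)\subseteq\bislf(\wt\psi)$. Given $\mathfrak c\in\mathfrak D$ with $\mathfrak c M = M\mathfrak c$, set $\mathfrak c' := \wt P\,\mathfrak c\,P$, again a differential operator. Using $\wt P P = L$, $P\wt P = M$ and $\mathfrak c M = M\mathfrak c$, a one-line manipulation gives the intertwining identity $\mathfrak c' L = L\,\mathfrak c'$. Thus $\mathfrak c'$ centralizes $L$; since $\bislf(\psi)$ is maximally commutative it coincides with the centralizer of any of its nonconstant elements (the reasoning recalled in Remark \ref{self-adjo}), in particular with the centralizer of $L$, so $\mathfrak c'\in\bislf(\psi)$ and $\mathfrak c'\cdot\psi = g(y)\psi$ for some meromorphic $g$. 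I would then transport this back through the intertwiners: writing $\mathfrak c'\cdot\psi = \wt P\,\mathfrak c\,P\cdot\psi$, substituting $P\cdot\psi = q(y)\wt\psi$, applying $P$ on the left, and using $P\wt P = M$, $M\mathfrak c = \mathfrak c M$ and $M\cdot\wt\psi = q(y)\wt q(y)\wt\psi$, one gets $\mathfrak c\cdot\wt\psi = \frac{g(y)}{q(y)\wt q(y)}\wt\psi$, whose coefficient is meromorphic, so $\mathfrak c\in\bislf(\wt\psi)$.

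The reverse inclusion $\bislf(\wt\psi)\subseteq Z_{\mathfrak D}(M)$ is trivial since $\bislf(\wt\psi)$ is commutative and $M\in\bislf(\wt\psi)$. Hence $\bislf(\wt\psi) = Z_{\mathfrak D}(M)$, and maximality is now formal: any commutative subalgebra $B$ of $\mathfrak D$ containing $\bislf(\wt\psi)$ consists of operators commuting with $M\in B$, so $B\subseteq Z_{\mathfrak D}(M) = \bislf(\wt\psi)$. The assertion for $\bisrf$ follows by the identical argument applied to the dual $y$-picture of the Darboux transformation, using \eqref{factor-2} in place of \eqref{factor} and $b_\psi(\mathfrak u)$, $b_\psi(\wt{\mathfrak u})$ in place of $\mathfrak u$, $\wt{\mathfrak u}$ (equivalently, by transporting the statement for $\bislf(\wt\psi)$ through the anti-isomorphism $b_{\wt\psi}$).

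I expect the main obstacle to be not the algebra — the intertwining identity $\mathfrak c' L = L\mathfrak c'$ is the crux and is a short computation — but the bookkeeping of coefficient domains and normalizations: before running the centralizer argument one must restrict $\psi$, $\wt\psi$, $P$, $\wt P$ and the given operator $\mathfrak c$ to a common connected open set on which they are all defined (possible since a bispectral Darboux transformation only shrinks the domain), and one must invoke the normalization of \S\ref{2.1} that allows $L$ and $M$ to be taken with invertible leading coefficient, so that the characterization ``maximally commutative $=$ centralizer of a nonconstant element'' is available.
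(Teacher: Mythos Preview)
Your proof is correct and follows essentially the same strategy as the paper's: show that $\bislf(\wt\psi)$ coincides with the centralizer of the single nonconstant element $M=P\wt P$, by sandwiching a would-be centralizer element between the intertwiners $\wt P$ and $P$ to land in the centralizer of $L=\wt P P$, and then invoking maximality of $\bislf(\psi)$. Your presentation is in fact cleaner than the paper's, which phrases the same construction as a ``Darboux conjugate'' $\wt{\mathfrak u}\frac{1}{\wt p(x)}\mathfrak b\,\wt{\mathfrak d}\,\wt p(x)\,\wt{\mathfrak u}^{-1}$ involving a pseudo-differential inverse; once one unwinds that expression using $\wt{\mathfrak d}\,\wt p(x)\,\wt{\mathfrak u}^{-1}=\frac{1}{p(x)}\mathfrak u=P$, it is exactly your $\wt P\,\mathfrak c\,P$.
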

\begin{proof}
To be explicit, suppose
$$\wt\psi(x,y) = \frac{1}{p(x)q(y)}\mathfrak u\cdot\psi(x,y)\ \ \text{and}\ \ \psi(x,y) = \wt{\mathfrak u}\frac{1}{\wt p(x)\wt q(y)}\cdot\wt\psi(x,y)$$
for some operators $\mathfrak u,\wt{\mathfrak u}\in\bisl(\psi)$ and polynomials $p(x),\wt p(x),q(y),\wt q(y)$.
Set $\mathfrak d = \wt{\mathfrak u}\frac{1}{\wt p(x)p(x)}\mathfrak u$ and $\wt{\mathfrak d} = \frac{1}{p(x)}\mathfrak u\wt{\mathfrak u}\frac{1}{\wt p(x)}$.
Centralizers of non-constant differential operators are maximally commutative, so to prove that $\bislf(\wt\psi)$ is maximal, it suffices to show that $\bislf(\wt\psi)$ is the centralizer of $\wt{\mathfrak d}$.
Suppose that $\mathfrak b$ is a differential operator which commutes with $\wt{\mathfrak d}$.
Then the Darboux conjugate $\wt{\mathfrak u}\frac{1}{\wt p(x)}\mathfrak b\wt{\mathfrak d}\wt p(x)\wt{\mathfrak u}^{-1}$ is a differential operator which commutes with $\mathfrak d$.
By the maximality assumption on $\bislf(\psi)$, we know that $\bislf(\psi)$ is equal to the centralizer of $\mathfrak d$ and thus $\wt{\mathfrak u}\frac{1}{\wt p(x)}\mathfrak b\wt{\mathfrak d}\wt p(x)\wt{\mathfrak u}^{-1}$ is in $\bislf(\psi)$.
Consequently the Darboux conjugate $\mathfrak b\wt{\mathfrak d}$ is in $\bislf(\wt\psi)$.
Since $\wt{\mathfrak d}\in\bislf(\wt\psi)$, it follows that $\mathfrak b\in\bislf(\wt\psi)$.
Note that $\mathfrak b$ was an arbitrary element in the centralizer of $\wt{\mathfrak d}$, so this completes the proof.
\end{proof}

The next theorem shows that when we restrict our attention to a self-adjoint bispectral meromorphic functions $\psi(x,y)$, 
the self-adjoint bispectral meromorphic functions arising from bispectral Darboux transformations of $\psi(x,y)$ are precisely 
those which arise from self-adjoint bispectral Darboux transformations. This statement, which is converse to that in Proposition \ref{prop-dir1}, 
is harder and some easily verifiable conditions on the initial function $\psi(x,y)$ are imposed. 
\begin{thm}
\label{self-adj}
Let $\psi(x,y)$ and $\wt\psi(x,y)$ be a self-adjoint bispectral meromorphic functions and suppose that $\wt\psi(x,y)$ is a bispectral Darboux transformation of $\psi(x,y)$ with 
$$\wt\psi(x,y) = \frac{1}{p(x)q(y)}\mathfrak u\cdot\psi(x,y)\ \ \text{and}\ \ \psi(x,y) = \wt{\mathfrak u}\frac{1}{\wt p(x)\wt q(y)}\cdot\wt\psi(x,y)$$
for some operators $\mathfrak u,\wt{\mathfrak u}\in\bisl(\psi)$ and polynomials $p(x),\wt p(x),q(y),\wt q(y)$.
Assume that $\bislf(\psi)$ is maximally commutative, $p(x)=\wt p(x)$, $q(y)=\wt q(y)$, and that $\mathfrak u^*,\wt{\mathfrak u}^*\in\bisl(\psi)$.
Then $\wt\psi(x,y)$ is a self-adjoint bispectral Darboux transformation of $\psi(x,y)$, and in fact the above bispectral Darboux transformation is self-adjoint.
\end{thm}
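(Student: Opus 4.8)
The plan is to verify that the given presentation of $\wt\psi$ as a Darboux transformation of $\psi$ already satisfies Definition~\ref{selfadj}; as $\wt p=p$ and $\wt q=q$ are assumed, what remains is to establish $\wt{\mathfrak u}=\mathfrak u^*$ and $b_\psi(\wt{\mathfrak u})=b_\psi(\mathfrak u)^*$. First I record that $\bislf(\psi)$, $\bislf(\wt\psi)$ and $\bisrf(\psi)$ are stable under the formal adjoint. Since $\psi$ is self-adjoint, $\bislf(\psi)$ contains a nonconstant formally symmetric $\mathfrak L$, and maximal commutativity forces $\bislf(\psi)$ to equal the centralizer $C(\mathfrak L)$ of $\mathfrak L$ in the algebra of differential operators; as $[\mathfrak a,\mathfrak L]=0\Leftrightarrow[\mathfrak L^*,\mathfrak a^*]=0$ and $\mathfrak L^*=\mathfrak L$, this centralizer is $*$-stable. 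By the lemma preceding the theorem $\bislf(\wt\psi)$ is also maximally commutative, so the same argument with $\wt\psi$ gives $*$-stability of $\bislf(\wt\psi)$, and the $y$-variable version gives $*$-stability of $\bisrf(\psi)$. Finally, no nonzero element of $\bisl(\psi)$ annihilates $\psi$ (Proposition~\ref{b-psi}), so $\mathfrak u$ and $\wt{\mathfrak u}$ are uniquely determined once $p,q$ are fixed.

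Write $\mathfrak v:=\tfrac1{p(x)}\mathfrak u$ and $\mathfrak w:=\wt{\mathfrak u}\,\tfrac1{p(x)}$, so that $\mathfrak u=p(x)\mathfrak v$, $\wt{\mathfrak u}=\mathfrak w\,p(x)$, $\mathfrak u^*=\mathfrak v^*p(x)$, and the two Darboux identities (with $\wt p=p$, $\wt q=q$) become
$$
\mathfrak v\cdot\psi(x,y)=q(y)\,\wt\psi(x,y),\qquad \mathfrak w\cdot\wt\psi(x,y)=q(y)\,\psi(x,y).
$$
Composing gives $\mathfrak d:=\mathfrak w\mathfrak v=\wt{\mathfrak u}\,\tfrac1{p(x)^2}\mathfrak u\in\bislf(\psi)$ with $b_\psi(\mathfrak d)=q(y)^2$, dually $\wt{\mathfrak d}:=\mathfrak v\mathfrak w\in\bislf(\wt\psi)$ with $b_{\wt\psi}(\wt{\mathfrak d})=q(y)^2$, and $\mathfrak v\mathfrak d=\wt{\mathfrak d}\mathfrak v$. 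The condition $\wt{\mathfrak u}=\mathfrak u^*$ is equivalent to $\mathfrak w=\mathfrak v^*$. The key claim is that $\mathfrak v^*\cdot\wt\psi$ is a multiple of $\psi$ by a function of $y$, say $\mathfrak v^*\cdot\wt\psi=r(y)\psi$. Granting it, $\mathfrak v^*\mathfrak v\cdot\psi=q(y)\,\mathfrak v^*\cdot\wt\psi=q(y)r(y)\psi$, so $\mathfrak v^*\mathfrak v\in\bislf(\psi)$; since the space of differential operators $\mathfrak a$ of order $\le\ord\mathfrak v$ with $\mathfrak a\mathfrak v\in\bislf(\psi)$ is spanned by $\mathfrak w$ (such an $\mathfrak a\mathfrak v$ maps $\psi$ to a function-of-$y$ multiple of $\psi$, hence $\mathfrak a$ maps $\wt\psi$ to one of $\psi$, and the structure of $\bislf(\psi)$ under the Darboux correspondence makes these one-dimensional), we get $\mathfrak v^*=c\,\mathfrak w$ for a constant $c$; then $\mathfrak u^*=\mathfrak v^*p(x)=c\,\wt{\mathfrak u}$, and the normalizations that $\tfrac1p\mathfrak u$ is monic \eqref{normalization} and that elements of $\bislf(\psi)$ have constant leading coefficient pin $c$ to the value required by Definition~\ref{selfadj}, giving $\wt{\mathfrak u}=\mathfrak u^*$.

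What remains is the key claim, which I expect to be the main obstacle. The idea is that Darboux conjugation by $\mathfrak v$ is a ring isomorphism $\Phi\colon\bislf(\psi)\to\bislf(\wt\psi)$, $\Phi(\mathfrak c)=\mathfrak v\mathfrak c\mathfrak v^{-1}$ — this lands in differential operators because $\ker\mathfrak v$, being spanned by generalized eigenfunctions of $\bislf(\psi)$, is $\bislf(\psi)$-invariant — and it intertwines the Fourier maps, $b_{\wt\psi}(\Phi(\mathfrak c))=b_\psi(\mathfrak c)$, as one checks directly from $\mathfrak v\cdot\psi=q\,\wt\psi$. Taking formal adjoints and using the $*$-stability of $\bislf(\psi)$ and $\bislf(\wt\psi)$, conjugation by $\mathfrak v^*$ likewise carries $\bislf(\wt\psi)$ isomorphically onto $\bislf(\psi)$ with the corresponding intertwining; hence $\mathfrak v^*$ sends the joint eigenspace of $\bislf(\wt\psi)$ through $\wt\psi(\cdot,y)$ into a joint eigenspace of $\bislf(\psi)$, and since $\mathfrak v$ pairs the spectral value $y$ of $\psi$ with the spectral value $y$ of $\wt\psi$ (by $\mathfrak v\cdot\psi=q\,\wt\psi$) and $\mathfrak v^*$ is the adjoint intertwiner — one can monitor this through $\mathfrak d^*$, using $\mathfrak d^*\mathfrak v^*=\mathfrak v^*\wt{\mathfrak d}^*$ — the target is the eigenspace through $\psi(\cdot,y)$, so $\mathfrak v^*\cdot\wt\psi=r(y)\psi$. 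In rank $1$ these eigenspaces are lines; in higher rank one argues identically on the finite-dimensional joint eigenspaces, using that $\mathfrak v$ and $\mathfrak v^*$ have equal order. The second identity $b_\psi(\wt{\mathfrak u})=b_\psi(\mathfrak u)^*$ is obtained by repeating the whole argument in the $y$-variable: by \eqref{dual-poly} and \eqref{factor-2}, $\wt\psi$ is a Darboux transformation of $\psi$ in $y$ implemented by $b_\psi(\mathfrak u),b_\psi(\wt{\mathfrak u})$ with the same polynomials $q(y),p(x)$, and $\bisrf(\psi)$ is $*$-stable. With $\wt p=p$, $\wt q=q$ and these two operator identities in hand, $\wt\psi$ is a self-adjoint bispectral Darboux transformation of $\psi$ and the presentation we started from is the self-adjoint one.
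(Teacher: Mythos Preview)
Your overall plan is reasonable and ends at the same target as the paper (show $\wt{\mathfrak u}=\mathfrak u^*$ and its $b_\psi$-counterpart), but the argument has two genuine gaps at its core.

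\textbf{The key claim is not established.} From $\mathfrak v\mathfrak d=\wt{\mathfrak d}\mathfrak v$ you correctly get $\mathfrak d^*\mathfrak v^*=\mathfrak v^*\wt{\mathfrak d}^*$, so $\eta:=\mathfrak v^*\cdot\wt\psi$ satisfies $\mathfrak d^*\cdot\eta=s(y)\eta$ where $s(y)=b_{\wt\psi}(\wt{\mathfrak d}^*)$. But you never show that $s(y)$ equals the eigenvalue $t(y)=b_\psi(\mathfrak d^*)$ of $\mathfrak d^*$ on $\psi$; $*$-stability of $\bislf(\psi)$ and $\bislf(\wt\psi)$ only tells you these eigenvalues exist, not that they coincide. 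Your sentence ``$\mathfrak v$ pairs the spectral value $y$ of $\psi$ with the spectral value $y$ of $\wt\psi$'' refers to $\mathfrak d$ and $\wt{\mathfrak d}$, not to $\mathfrak d^*$ and $\wt{\mathfrak d}^*$. Even granting $s=t$, in rank $>1$ the joint $\bislf(\psi)$-eigenspace at $y$ is not one-dimensional, so membership in it does not give $\eta\in\bbc(y)\psi$; the phrase ``one argues identically on the finite-dimensional joint eigenspaces, using that $\mathfrak v$ and $\mathfrak v^*$ have equal order'' is not an argument. Separately, your claim that $\ker\mathfrak v$ is $\bislf(\psi)$-invariant (needed for $\Phi$ to land in differential operators) is asserted, not proved: $\ker\mathfrak v\subset\ker\mathfrak d$ is $\mathfrak d$-invariant, but invariance under all of $\bislf(\psi)$ does not follow from that alone.

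\textbf{The one-dimensionality step is unjustified.} You assert that the space of differential operators $\mathfrak a$ with $\ord\mathfrak a\le\ord\mathfrak v$ and $\mathfrak a\mathfrak v\in\bislf(\psi)$ is spanned by $\mathfrak w$, citing ``the structure of $\bislf(\psi)$ under the Darboux correspondence''. This is exactly the point at issue: nothing so far rules out a lower-order $\mathfrak a_0$ with $\mathfrak a_0\mathfrak v\in\bislf(\psi)$. The paper avoids this altogether: it introduces $\varphi:=\tfrac{1}{p(x)q(y)}\wt{\mathfrak u}^*\cdot\psi$, uses maximal commutativity to get $\bislf(\varphi)=\bislf(\wt\psi)$ (both being the centralizer of $\wt{\mathfrak d}^*$), and then shows by a direct commutator computation with a formally symmetric $\mathfrak b\in\bislf(\wt\psi)$ that $\mathfrak a=\tfrac{1}{p}\wt{\mathfrak u}^*\wt{\mathfrak u}\tfrac{1}{p}\in\bislf(\wt\psi)$; hence $\varphi=h(y)\wt\psi$, and a leading-coefficient argument forces $h$ constant. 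That route sidesteps both eigenvalue-matching and any dimension count on intertwiners.
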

\begin{remk}
(i) In the case that $\psi(x,y) = \psi_{\Ai}(x,y)$, the assumptions follow trivially since $b_\psi^{-1}(\bisrf(\psi)) = \bbc[x]$, $b_\psi(\bislf(\psi)) = \bbc[y]$, and $b_\psi(\mathfrak w^*) = b_\psi(\mathfrak w)^*$ for all $\mathfrak w\in\bisl(\psi)$.
For $\psi(x,y) = \psi_{\Be(\nu)}(x,y)$ (with $\nu\notin\bbz$) the conditions follow from the explicit characterization of the bispectral Darboux transformations by Bakalov-Horozov-Yakimov \cite[Theorem 2.7]{BHYcmp}.

(ii) The condition $p(x)=\wt p(x)$ and $q(y) = \wt q(y)$ in the theorem is equivalent to the seemingly weaker condition that
$$
p(x)/\wt p(x) = \wt{p}_1(x)/p_1(x) \quad \mbox{and} \quad 
q(y)/\wt q(y) = \wt{q}_1(y)/q_1(y)
$$
for some polynomials $p_1(x),\wt p_1(x)\in \bisl(\psi)$ and $q_1(y),\wt q_1(y)\in\bisr(\psi)$. This is the condition that is directly
verified by \cite[Theorem 2.7]{BHYcmp}. If the second condition holds, then by setting $\mathfrak d_1 = b_\psi^{-1}(q_1(y))$ and $\wt{\mathfrak d_1} = b_\psi^{-1}(\wt q_1(y))$,
and by changing
\begin{align*}
&p(x)\mapsto p_1(x)p(x),\ \ \mathfrak u\mapsto p_1(x)\mathfrak u\mathfrak d_1,\ \ q(y)\mapsto q_1(y)q(y), 
\\
&\wt p(x)\mapsto \wt p_1(x)\wt p(x),\ \ \wt{\mathfrak u}\mapsto \wt p_1(x)\wt{\mathfrak u}\wt{\mathfrak d_1},\ \ \wt q(y)\mapsto \wt q_1(y)\wt q(y),
\end{align*}
we get a new bispectral Darboux transformation from $\psi(x,y)$ to $\wt\psi(x,y)$ which satisfies the condition (a) of the theorem.
\end{remk}
\begin{proof}
Define $r(y) = q(y)\wt q(y)$ and
$$\varphi(x,y) := \frac{1}{\wt p(x)\wt q(y)}\wt{\mathfrak u}^*\cdot\psi(x,y), \ \ \text{and}\ \ \wt{\mathfrak d} = \left(\frac{1}{p(x)}\mathfrak u\wt{\mathfrak u}\frac{1}{\wt p(x)}\right)$$
so that
$$\wt{\mathfrak d}\cdot\wt\psi(x,y) = \wt\psi(x,y) r(y)\ \ \text{and}\ \ \wt{\mathfrak d}^*\cdot\varphi(x,y) = \varphi(x,y)r(y).$$
Note that $\varphi(x,y)$ is a bispectral Darboux transform of $\psi(x,y)$ and
$$\varphi(x,y) = \frac{1}{\wt q(y)^2}\mathfrak a\cdot\wt\psi(x,y)\ \text{for}\ \mathfrak a = \frac{1}{\wt p(x)}\wt{\mathfrak u}^*\wt{\mathfrak u}\frac{1}{\wt p(x)}.$$

By the previous lemma, the algebras $\bislf(\wt\psi),\bisrf(\wt\psi),\bislf(\varphi)$, and $\bisrf(\varphi)$ are all maximally commutative.
Since $\wt\psi(x,y)$ is self-adjoint and $\bislf(\wt\psi)$ is maximally commutative, the inclusion $\wt{\mathfrak d}\in\bislf(\wt\psi)$ implies $\wt{\mathfrak d}^*\in\bislf(\wt\psi)$.
Therefore both $\bislf(\wt\psi)$ and $\bislf(\varphi)$ contain the common nonconstant element $\wt{\mathfrak d}^*$.
Since both algebras are maximally commutative, they are each equal to the centralizer of $\wt{\mathfrak d}^*$ and hence $\bislf(\wt\psi) = \bislf(\varphi)$.
Similarly $\bisrf(\wt\psi) = \bisrf(\varphi)$.  In particular $\varphi$ is self-adjoint.

For any $\mathfrak d\in\bislf(\psi)$ we know that 
$$\left(\frac{1}{p(x)}\mathfrak u\mathfrak d\wt{\mathfrak u}\frac{1}{\wt p(x)}\right)\cdot\wt\psi(x,y) = \wt\psi(x,y) b_\psi(\mathfrak d)r(y),\ \ \left(\frac{1}{p(x)}\mathfrak u\mathfrak d\wt{\mathfrak u}\frac{1}{\wt p(x)}\right)^*\cdot\varphi(x,y) = \varphi(x,y)b_\psi(\mathfrak d)r(y).$$
From this, we deduce that for every $\mathfrak b\in\bislf(\wt\psi)$ we must have $b_{\wt\psi}(\mathfrak b) = b_{\varphi}(\mathfrak b^*)$.
Choose $\mathfrak b\in\bislf(\wt\psi)$ to be formally symmetric.
Then for some polynomial $s(y)$ we calculate $\mathfrak b\cdot\wt\psi(x,y) = \wt\psi(x,y)s(y)$ and $\mathfrak b\cdot\varphi(x,y) = \varphi(x,y)s(y)$ and therefore
$$\mathfrak b\mathfrak a\cdot\wt\psi(x,y) = \mathfrak b\cdot \varphi(x,y)\wt q(y)^2 = \varphi(x,y)\wt q(y)^2s(y) = \mathfrak a\cdot\wt\psi(x,y)s(y) = \mathfrak a\mathfrak b\cdot\wt\psi(x,y).$$
Hence $\mathfrak a$ and $\mathfrak b$ commute.
It follows that $\mathfrak a\in\bislf(\wt\psi)$ and therefore that $\varphi(x,y) = \wt\psi(x,y)h(y)$ for some rational function $h(y)$.
This implies that $\bisrf(\varphi) = h(y)\bisrf(\wt\psi) h(y)^{-1}$ and since $\bisrf(\varphi)=\bisrf(\psi)$ we find $\bisrf(\wt\psi) =h(y)\bisrf(\wt\psi)h(y)^{-1}$.
However, each element of $\bisrf(\wt\psi)$ has constant leading coefficient
If $\mathfrak c$ is a nonconstant differential operator with a constant leading coefficient then $\mathfrak c - h(y)\mathfrak c h(y)^{-1}$ has leading coefficient equal to some constant multiple of $h'(y)/h(y)$.
Therefore $h'(y)=0$ and $h(y)$ must be constant $c$.

From the result of the previous paragraph,
$$\frac{1}{\wt p(x)\wt q(y)}\wt{\mathfrak u}^*\cdot\psi(x,y) = \frac{c}{p(x)q(y)}\mathfrak u\cdot\psi(x,y).$$
Applying the above argument on the opposite side, we obtain the similar expression
$$\frac{1}{\wt p(x)\wt q(y)}b_\psi(\wt{\mathfrak u})^*\cdot\psi(x,y) = \frac{c'}{p(x)q(y)}b_\psi(\mathfrak u)\cdot\psi(x,y).$$
for some constant $c'$.
Finally we apply the assumption that $p(x) = \wt p(x)$ and $q(y) = \wt q(y)$ to obtain $c=c'=1$ and $\wt{\mathfrak u}^* = \mathfrak u$ and $b_\psi(\wt{\mathfrak u})^* = b_\psi(\mathfrak u)$.
\end{proof}

\begin{cor} The self-adjoint bispectral Darboux transformations of the exponential, Airy and Bessel functions are precisely those functions 
$\wt\psi(x,y)$ that may be obtained by a bispectral Darboux transformations of $\psi_{\exp}, \psi_{\Ai}$ or $\psi_{\Be(\nu)}$ and are self-adjoint as
bispectral functions in the sense of Definition \ref{self-adj-fn}.
\end{cor}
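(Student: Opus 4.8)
The plan is to split the corollary into its two evident inclusions and obtain each from results already in place. The inclusion ``$\subseteq$'' is immediate: by Definition \ref{selfadj} a self-adjoint bispectral Darboux transformation of $\psi\in\{\psi_{\exp},\psi_{\Ai},\psi_{\Be(\nu)}\}$ is in particular a bispectral Darboux transformation of $\psi$, and by Proposition \ref{prop-dir1} it is self-adjoint as a bispectral meromorphic function in the sense of Definition \ref{self-adj-fn}. So no new work is needed for this direction.

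For the reverse inclusion ``$\supseteq$'' I would fix $\psi\in\{\psi_{\exp},\psi_{\Ai},\psi_{\Be(\nu)}\}$ (with $\nu\in\bbc\backslash\bbz$ in the Bessel case) together with a bispectral Darboux transformation $\wt\psi(x,y)$ of $\psi(x,y)$ that is self-adjoint, write it as $\wt\psi=\frac{1}{p(x)q(y)}\mathfrak u\cdot\psi$ and $\psi=\wt{\mathfrak u}\frac{1}{\wt p(x)\wt q(y)}\cdot\wt\psi$, and then verify the hypotheses of Theorem \ref{self-adj}; once they hold, that theorem asserts precisely that the transformation is self-adjoint, so $\wt\psi$ is a self-adjoint bispectral Darboux transformation of $\psi$. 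First, $\psi$ is itself self-adjoint, witnessed by the formally symmetric eigenoperators $\partial_x^2,\partial_y^2$ for $\psi_{\exp}$, and $\mathfrak d_{\Ai,x},\mathfrak d_{\Ai,y}$, resp. $\mathfrak d_{\Be(\nu),x},\mathfrak d_{\Be(\nu),y}$, for the other two. Next, $\bisl(\psi)$ is stable under the formal adjoint in each case---for $\psi_{\exp}$ and $\psi_{\Ai}$ because $\bisl(\psi)=\mathfrak D(\bbc[x])$ with $\partial_x^*=-\partial_x$ and $x^*=x$, and for $\psi_{\Be(\nu)}$ because the generators of $\bisl(\psi_{\Be(\nu)})=\langle\mathfrak d_{\Be(\nu),x},x\partial_x,x^2\rangle$ have adjoints $\mathfrak d_{\Be(\nu),x}$, $-x\partial_x-1$, $x^2$, all of which again lie in the algebra. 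Hence $\mathfrak u^*,\wt{\mathfrak u}^*\in\bisl(\psi)$ automatically, since $\mathfrak u,\wt{\mathfrak u}\in\bisl(\psi)$. The maximal commutativity of $\bislf(\psi)$ is automatic for the rank $1$ function $\psi_{\exp}$ (cf. the discussion after Theorem \ref{rank1}) and, for $\psi_{\Ai}$ and $\psi_{\Be(\nu)}$, follows from the description of $\bislf(\psi)$ as a polynomial algebra in a single second-order operator together with the classification of their bispectral Darboux transformations. Finally, the normalization $p(x)=\wt p(x)$, $q(y)=\wt q(y)$ is arranged by part (ii) of the Remark following Theorem \ref{self-adj}: after the harmless modification of $(\mathfrak u,\wt{\mathfrak u},p,\wt p,q,\wt q)$ described there it suffices to know that $p/\wt p$ and $q/\wt q$ are ratios of polynomials lying in $\bisl(\psi)$, resp. $\bisr(\psi)$, which is automatic for $\psi_{\exp}$ and $\psi_{\Ai}$ (these Fourier algebras contain all of $\bbc[x]$, resp. $\bbc[y]$), and which is exactly what \cite[Theorem 2.7]{BHYcmp} verifies for $\psi_{\Be(\nu)}$.

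The only place with genuine content---and hence the main obstacle---is the rank $2$ Bessel case of the last two points: there $\bisl(\psi_{\Be(\nu)})$ contains only even powers of $x$ (and $\bisr(\psi_{\Be(\nu)})$ only even powers of $y$), so that $p/\wt p$ and $q/\wt q$ being ratios of elements of these algebras, as well as the maximal commutativity of $\bislf(\psi_{\Be(\nu)})$, are not formal and must be extracted from the explicit description of the admissible operators $\mathfrak v$ and of the polynomials $p,q$ in terms of the kernels of $\mathfrak v$, i.e. from \cite[Theorem 2.7]{BHYcmp} and Example \ref{bisp-D-ex}. With that input invoked, the corollary becomes a formal consequence of Proposition \ref{prop-dir1} and Theorem \ref{self-adj}.
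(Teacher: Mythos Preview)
Your proposal is correct and follows precisely the approach the paper intends: the corollary is stated without a separate proof because one direction is Proposition \ref{prop-dir1} and the other is Theorem \ref{self-adj}, with the verification of that theorem's hypotheses for $\psi_{\exp},\psi_{\Ai},\psi_{\Be(\nu)}$ supplied by the Remark immediately preceding the corollary (part (i) for the Airy and Bessel cases, part (ii) for the normalization $p=\wt p$, $q=\wt q$). Your breakdown of the hypotheses---self-adjointness of $\psi$, stability of $\bisl(\psi)$ under $*$, maximal commutativity of $\bislf(\psi)$, and the normalization step via \cite[Theorem 2.7]{BHYcmp} in the Bessel case---matches the paper's reasoning exactly.
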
 

\subsection{Interplay between adjoints and the generalized Fourier map}
\label{subFour}
In this subsection we discuss the interaction between the formal adjoint $*$ and the generalized Fourier map $b_\psi$.
Specifically, we consider in this section the value of $b_\psi(\mathfrak d^*)$, assuming that both $\mathfrak d$ and $\mathfrak d^*$ are in $\bisl(\psi)$.
It turns out that in the cases we consider, the value of $b_\psi(\mathfrak d^*)$ is related to $b_\psi(\mathfrak d)^*$ by a certain automorphism of the Weyl algebra.
As a consequence of this, the elements of $\bisls(\psi)$ are invariant under this automorphism.
This invariance property of $\bisls(\psi)$ will play an important role of our proof of Theorem A from the introduction.

For many self-adjoint bispectral meromorphic functions $\psi$ the associated algebras $\bisl(\psi)$ and $\bisr(\psi)$ are closed under the formal adjoint operation $*$.
This is true in particular of the elementary bispectral meromorphic functions: by Example \ref{Fourier-ex}, 
the Fourier algebras for $\psi_{\exp},\psi_{\Ai}$ and $\psi_{\Be(\nu)}$ are all closed under the operation of taking formal adjoints.
The same property turns out to hold for the Fourier algebras of all functions obtained by self-adjoint bispectral Darboux transformations from them. 
\begin{prop}
Let $\wt\psi$ be a self-adjoint bispectral Darboux transformation of $\psi$.
Suppose that $\bisl(\psi)$ is closed under the formal adjoint operation $*$.
Then $\bisl(\wt\psi)$ is also closed under the $*$-operation.
The same statement holds with $\bisl$ replaced with $\bisr$.
\end{prop}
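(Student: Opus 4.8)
The plan is to realize $\bisl(\wt\psi)$ as the algebra of differential operators \emph{intertwined} with $\bisl(\psi)$ by the Darboux operator, and to exploit that the self-adjointness hypotheses make this intertwining $*$-equivariant. Write $\mathfrak v:=\tfrac1{p(x)}\mathfrak u$ and $\mathfrak w:=\mathfrak v^{*}=\mathfrak u^{*}\tfrac1{p(x)}$, so that $\wt\psi=\tfrac1{q(y)}\mathfrak v\cdot\psi$. The hypotheses $\wt{\mathfrak u}=\mathfrak u^{*}$, $\wt p=p$, $\wt q=q$, combined with Theorem \ref{bispectral is symmetric}, give the reverse presentation $\psi=\tfrac1{q(y)}\mathfrak w\cdot\wt\psi$ (and show that the reverse bispectral Darboux transformation $\wt\psi\rightsquigarrow\psi$ is itself self-adjoint, its Darboux operator $p(x)\mathfrak w$ being the formal adjoint of $\mathfrak v\,p(x)$). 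From \eqref{spec-eq1} and \eqref{spec-eq3} the operators $\mathfrak m:=\mathfrak v\mathfrak w\in\bislf(\wt\psi)$ and $\mathfrak n:=\mathfrak w\mathfrak v\in\bislf(\psi)$ act on $\wt\psi$, resp. $\psi$, as the nonzero polynomial $q(y)^{2}$, and both are formally symmetric.

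The technical core is a \emph{Darboux description} of the Fourier algebra: for $\mathfrak e\in\mathfrak D(U)$ one has $\mathfrak e\in\bisl(\wt\psi)$ if and only if $\mathfrak e\mathfrak v=\mathfrak v\mathfrak d$ for some $\mathfrak d\in\bisl(\psi)$, in which case $b_{\wt\psi}(\mathfrak e)=\tfrac1{q(y)}\,b_{\psi}(\mathfrak d)\,q(y)$. The ``if'' part is immediate: applying both sides to $\psi$ and using $\mathfrak v\cdot\psi=q(y)\wt\psi$ yields $q(y)(\mathfrak e\cdot\wt\psi)=\big(b_{\psi}(\mathfrak d)\,q(y)\big)\cdot\wt\psi$. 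For the ``only if'' part I would write $\mathfrak e\mathfrak v=\mathfrak v\mathfrak d+\rho$ by left Euclidean division in $\mathfrak D(U)$ with $\ord\rho<\ord\mathfrak v$, extract the identity $\mathfrak e\mathfrak v\cdot\psi=\mathfrak v\cdot\big(\tfrac1{q(y)}b_{\wt\psi}(\mathfrak e)q(y)\big)\cdot\psi$ exactly as in the ``if'' computation, apply $\mathfrak w$ on the left and use $\mathfrak w\mathfrak v=\mathfrak n$ with $\mathfrak n\cdot\psi=q(y)^{2}\psi$; an order count in $x$ (the term $\mathfrak w\rho$ being strictly lower order than $\mathfrak w\mathfrak e\mathfrak v$ and $\mathfrak n\mathfrak d$) together with faithfulness of the action on $\psi$ forces $\rho=0$, after which $\mathfrak d=\mathfrak v^{-1}\mathfrak e\mathfrak v$ is a genuine differential operator lying in $\bisl(\psi)$. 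Running the same argument for the reverse transformation $\psi=\tfrac1{q(y)}\mathfrak w\cdot\wt\psi$ gives the dual form: $\mathfrak d'\in\bisl(\psi)$ if and only if $\mathfrak d'\mathfrak w=\mathfrak w\mathfrak e'$ for some $\mathfrak e'\in\bisl(\wt\psi)$. I expect this intertwining description to be the main obstacle, since a priori the $\mathfrak v$-conjugate of a Fourier operator of $\psi$ is only a pseudodifferential operator and one must rule out a surviving remainder (equivalently, one must prove a saturation property of $\bisl(\psi)$, resp. $\bisl(\wt\psi)$, over $\bislf$).

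Granting this, the proposition follows quickly. Let $\mathfrak e\in\bisl(\wt\psi)$ and choose $\mathfrak d\in\bisl(\psi)$ with $\mathfrak e\mathfrak v=\mathfrak v\mathfrak d$. Taking formal adjoints and using $\mathfrak v^{*}=\mathfrak w$ and $(\mathfrak v\mathfrak d)^{*}=\mathfrak d^{*}\mathfrak v^{*}$ gives $\mathfrak w\,\mathfrak e^{*}=\mathfrak d^{*}\mathfrak w$, with $\mathfrak d^{*}\in\bisl(\psi)$ by the hypothesis that $\bisl(\psi)$ is closed under $*$. The dual Darboux description applied to $\mathfrak d^{*}$ produces $\mathfrak e'\in\bisl(\wt\psi)$ with $\mathfrak d^{*}\mathfrak w=\mathfrak w\mathfrak e'$; comparing with $\mathfrak d^{*}\mathfrak w=\mathfrak w\mathfrak e^{*}$ and cancelling $\mathfrak w$ on the left in the domain $\mathfrak D(U)$ yields $\mathfrak e^{*}=\mathfrak e'\in\bisl(\wt\psi)$. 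The statement for $\bisr$ is obtained by the identical argument carried out in the $y$-variable: one uses the dual Darboux presentations $\wt\psi=\tfrac1{p(x)}\mathfrak V\cdot\psi$ and $\psi=\tfrac1{p(x)}\mathfrak W\cdot\wt\psi$ with $\mathfrak V:=\tfrac1{q(y)}b_{\psi}(\mathfrak u)$ and $\mathfrak W:=\mathfrak V^{*}=b_{\psi}(\mathfrak u^{*})\tfrac1{q(y)}$ (here the hypothesis $b_{\psi}(\wt{\mathfrak u})=b_{\psi}(\mathfrak u)^{*}$ enters), the spectral equations \eqref{spec-eq2} and \eqref{spec-eq4}, and the corresponding hypothesis that $\bisr(\psi)$ is closed under $*$.
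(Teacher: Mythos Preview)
Your approach has a genuine gap: the ``Darboux description'' you rely on --- that $\mathfrak e\in\bisl(\wt\psi)$ if and only if $\mathfrak e\mathfrak v=\mathfrak v\mathfrak d$ for some $\mathfrak d\in\bisl(\psi)$ --- is \emph{false} in the ``only if'' direction. Take $\mathfrak e=p(x)^{2}$, which lies in $\bisl(\wt\psi)$ by \eqref{spec-eq2}. Then $\mathfrak v^{-1}p(x)^{2}\mathfrak v=p(x)^{2}+\mathfrak v^{-1}[p(x)^{2},\mathfrak v]$; since $\mathfrak v$ is monic of order $d_{1}>0$, the commutator has leading term $-d_{1}\,(p(x)^{2})'\,\partial_{x}^{\,d_{1}-1}$, so $\mathfrak v^{-1}[p(x)^{2},\mathfrak v]$ has a nonzero $\partial_{x}^{-1}$ term whenever $p$ is nonconstant. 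Hence $\mathfrak v^{-1}p(x)^{2}\mathfrak v$ is a genuine pseudodifferential operator, and no $\mathfrak d\in\bisl(\psi)$ with $p(x)^{2}\mathfrak v=\mathfrak v\mathfrak d$ exists. Your Euclidean-division argument cannot succeed because the identity you obtain after applying $\mathfrak w$ is $\mathfrak w\rho+\mathfrak n\mathfrak d=\mathfrak w\mathfrak e\mathfrak v$, which is a tautology (just $\mathfrak w$ applied to the division itself); the order comparison merely says both sides have the same leading term and gives no constraint on $\rho$.

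The paper avoids this by never claiming a full intertwining characterization. It uses only the one-way implication you do establish, namely that $\wt{\mathfrak d}\in\bisl(\wt\psi)$ implies $\mathfrak w\,\wt{\mathfrak d}\,\mathfrak v\in\bisl(\psi)$ (with image $q(y)b_{\wt\psi}(\wt{\mathfrak d})q(y)$). Taking adjoints and using $*$-closure of $\bisl(\psi)$ yields $\mathfrak w\,\wt{\mathfrak d}^{*}\,\mathfrak v\in\bisl(\psi)$, hence $\wt{\mathfrak a}\,\wt{\mathfrak d}^{*}\in\bisl(\wt\psi)$ where $\wt{\mathfrak a}:=\mathfrak v\mathfrak w\in\bislf(\wt\psi)$ with $b_{\wt\psi}(\wt{\mathfrak a})=q(y)^{2}$. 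The nontrivial step is then to pass from $\wt{\mathfrak a}\,\wt{\mathfrak d}^{*}\in\bisl(\wt\psi)$ to $\wt{\mathfrak d}^{*}\in\bisl(\wt\psi)$. Since $b_{\wt\psi}(\wt{\mathfrak a})$ is a \emph{function}, right multiplication by $\wt{\mathfrak a}$ is cancellable in $\bisl(\wt\psi)$: $\mathfrak q\,\wt{\mathfrak a}\in\bisl(\wt\psi)\Rightarrow\mathfrak q\in\bisl(\wt\psi)$. Left multiplication is handled by an $\ad$-nilpotency trick: if $\wt{\mathfrak d}$ has co-order $\ell$ then $\ad_{\wt{\mathfrak a}}^{\ell+1}(\wt{\mathfrak d})=0$, hence (since $\wt{\mathfrak a}^{*}=\wt{\mathfrak a}$) also $\ad_{\wt{\mathfrak a}}^{\ell+1}(\wt{\mathfrak d}^{*})=0$; expanding this binomially and using $\wt{\mathfrak a}\,\wt{\mathfrak d}^{*}\in\bisl(\wt\psi)$ shows $\wt{\mathfrak a}^{\ell+1}\wt{\mathfrak d}^{*}\in\bisl(\wt\psi)$, and then the right-cancellation property is applied $\ell+1$ times. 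This is the mechanism you are missing.
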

\begin{proof}
Let $\mathfrak u,\wt{\mathfrak u}=\mathfrak u^*,p(x)=\wt p(x)$ and $q(x)=\wt q(x)$ be as in the definition of a self-adjoint bispectral Darboux transformation with $\wt\psi(x,y) = \frac{1}{p(x)q(y)}\mathfrak u\cdot \psi(x,y)$.
Also define $\wt{\mathfrak a} = \frac{1}{p(x)}\mathfrak u\mathfrak u^*\frac{1}{p(x)}\in\bisls(\wt\psi)$ with $\wt{\mathfrak a}\cdot\wt\psi(x,y) = q(y)^2\wt\psi(x,y)$.

Next note that for any differential operator $\mathfrak q$ with meromorphic coefficients,
\begin{equation}
\label{impl}
\mathfrak q\wt{\mathfrak a}\in\bisl(\wt \psi) \; \; \Rightarrow \; \; 
\mathfrak q\in \bisl(\wt \psi).
\end{equation}
This follows from the identity
$$\mathfrak q \cdot \wt\psi(x,y) q(y)^2 = \mathfrak q\wt{\mathfrak a}\cdot\wt \psi(x,y) = b_{\wt\psi}(\mathfrak q\wt{\mathfrak a})\wt\psi(x,y),$$
which implies that $\mathfrak q\cdot\wt\psi(x,y) = q(y)^{-2}b_{\wt\psi}(\mathfrak q\wt{\mathfrak a})\cdot\wt\psi(x,y)$.

For any $\wt{\mathfrak d}\in\bisl(\wt\psi)$, one easily checks that $\mathfrak d := \mathfrak u^* \frac{1}{p(x)}\wt{\mathfrak d}\frac{1}{p(x)}\mathfrak u\in \bisl(\psi)$.
In fact $b_\psi(\mathfrak d) =  q(y)b_{\wt\psi}(\wt{\mathfrak d})q(y)$.
Since $\bisl(\psi)$ is closed under $*$, it follows that $\mathfrak d^*  = \mathfrak u^*\frac{1}{p(x)}\wt{\mathfrak d}^*\frac{1}{p(x)}\mathfrak u\in\bisl(\psi)$.
Consequently 
$$\wt{\mathfrak a}\wt{\mathfrak d}^*\cdot\wt\psi(x,y) = \frac{1}{q(y)}b_\psi(\mathfrak d^*) \cdot q(y)\wt\psi(x,y),$$
so that in particular $\wt{\mathfrak a}\wt{\mathfrak d}^*\in\bisl(\wt\psi)$ for all $\wt{\mathfrak d}\in\bisl(\wt\psi)$.
In the remainder of the proof we show that this implies that $\wt{\mathfrak d}^*\in\bisl(\wt\psi)$. 

Let $\ell$ be the co-order of $\wt{\mathfrak d}$.
Then $\ad_{\wt{\mathfrak a}}^{\ell +1}(\wt{\mathfrak d})$ is sent under $b_{\wt\psi}$ to $(-1)^{\ell +1} \ad_{q(y)^2}^{\ell+1}(b_\psi(\wt{\mathfrak d})) = 0$.
Since the generalized Fourier map is an anti-isomorphism, this implies that $\ad_{\wt{\mathfrak a}}^{\ell +1}(\wt{\mathfrak d})$.
Using the fact that $\wt{\mathfrak a}$ is formally symmetric, we obtain that 
$$
0 = \ad_{\wt{\mathfrak a}}^{\ell +1}(\wt{\mathfrak d}^*) = \sum_{k=0}^{\ell +1} 
(-1)^k 
\begin{pmatrix}
\ell +1 \\
k
\end{pmatrix}
\wt{\mathfrak a}^{\ell +1 -k}
\wt{\mathfrak d}^*
\wt{\mathfrak a}^k.
$$
This identity and the fact that $\wt{\mathfrak a}\wt{\mathfrak d}^*, \wt{\mathfrak a} \in\bisl(\wt\psi)$ give that 
$\wt{\mathfrak a}^{\ell +1} \wt{\mathfrak d}^* \in\bisl(\wt\psi)$. By repeatedly applying \eqref{impl}, 
we obtain that $\wt{\mathfrak d}^* \in\bisl(\psi)$.
This proves that $\bisl(\wt\psi)$ is closed under $*$. In a similar way one proves that $\bisr(\wt\psi)$ is closed under $*$.
\end{proof}

Assuming that both $\bisl(\psi)$ and $\bisr(\psi)$ are closed under the $*$-operation, 
we can define automorphisms $\iota_{\psi, x}$ and $\iota_{\psi,y}$ of $\bisl(\psi)$ and $\bisr(\psi)$ by setting
\begin{equation}
\label{iota}
\iota_{\psi, x}(\mathfrak d) = b_\psi^{-1}(b_\psi(\mathfrak d^*)^*)\ \ \text{and} \ \ \iota_{\psi,y}(\mathfrak b) = b_\psi(b_\psi^{-1}(\mathfrak d^*)^*).
\end{equation}
These automorphisms encode the interaction between the generalized Fourier map $b_\psi$ and the formal adjoint.
Clearly, $\iota_{\psi, x}$ and $\iota_{\psi,y}$ restrict to the identity on $\bisls(\psi)$ and $\bisrs(\psi)$.

In the case $\psi = \psi_{\exp}$ or $\psi = \psi_{\Be(\nu)}$ the automorphisms $\iota_{\psi, x}$ and $\iota_{\psi,y}$ are exactly obtained by restricting the automorphism of the algebra of differential operators with rational coefficients induced by the affine transformation $x\mapsto -x$ and $y \mapsto - y$.
In the case $\psi = \psi_{\Ai}$ the automorphisms are just the identity.
Moreover, as a consequence of the previous proposition, for any self-adjoint bispectral Darboux transformation $\wt\psi$ of $\psi=\psi_{\exp},\psi_{\Ai}$ or $\psi_{\Be(\nu)}$ the involutive automorphisms $\iota_{\wt\psi,x}$ and $\iota_{\wt\psi,y}$ are well-defined.
The next proposition shows that $\iota_{\wt{\psi}, x}$ and $\iota_{\wt\psi,y}$ behave identically to $\iota_{\psi,x}$ and $\iota_{\psi,y}$.
\begin{prop}\label{iota prop}
Let $\psi$ be a self-adjoint bispectral meromorphic function and $\wt\psi$ be a self-adjoint bispectral Darboux transformation of $\psi$.
Suppose that $\bisl(\psi)$ and $\bisr(\psi)$ are closed under the formal adjoint operation $*$.
Assume moreover that there are involutive automorphisms $\sigma_x$ and $\sigma_y$ of the algebras of differential operators with meromorphic coefficients in $x$ and $y$, respectively, restricting to $\iota_{\psi, x}$ and $\iota_{\psi, y}$.
Then $\sigma_x$ and $\sigma_y$ also restrict to $\iota_{\wt\psi, x}$ and  $\iota_{\wt\psi, y}$, respectively.
In particular the elements of $\bisls(\wt\psi)$ are fixed by $\sigma_x$ and similarly for $\bisrs(\wt\psi)$.
\end{prop}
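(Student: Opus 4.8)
The plan is to transport operators between $\bisl(\wt\psi)$ and $\bisl(\psi)$ along the Darboux data, reduce the identity $\sigma_x=\iota_{\wt\psi,x}$ to the hypothesis $\sigma_x|_{\bisl(\psi)}=\iota_{\psi,x}$, and then clean up the effect of the normalizing polynomials. First I would set up the bookkeeping: by the previous proposition $\bisl(\wt\psi)$ and $\bisr(\wt\psi)$ are closed under $*$, so $\iota_{\wt\psi,x}$ and $\iota_{\wt\psi,y}$ are defined; and, as in the proof of that proposition, for $\wt{\mathfrak d}\in\bisl(\wt\psi)$ the operator $\Phi(\wt{\mathfrak d}):=\mathfrak u^*\tfrac{1}{p(x)}\wt{\mathfrak d}\tfrac{1}{p(x)}\mathfrak u$ lies in $\bisl(\psi)$ with $b_\psi(\Phi(\wt{\mathfrak d}))=q(y)\,b_{\wt\psi}(\wt{\mathfrak d})\,q(y)$. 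Writing $C(\mathfrak b):=q(y)\,\mathfrak b\,q(y)$, this says $b_\psi\circ\Phi=C\circ b_{\wt\psi}$. Using $(\tfrac1{p(x)})^*=\tfrac1{p(x)}$ and $(\mathfrak u^*)^*=\mathfrak u$ one checks $\Phi(\wt{\mathfrak d}^*)=\Phi(\wt{\mathfrak d})^*$, and clearly $C(\mathfrak b^*)=C(\mathfrak b)^*$; also $\Phi$ is injective, since $\mathfrak u,\mathfrak u^*,\tfrac1{p(x)}$ are nonzero in the Ore domain of differential operators with meromorphic coefficients.

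Next I would observe that $\sigma_x$ fixes $\mathfrak u$ and $\mathfrak u^*$. Both lie in $\bisl(\psi)$ (the second because $\mathfrak u^*=\wt{\mathfrak u}$), and self-adjointness of the Darboux transformation gives $b_\psi(\mathfrak u^*)=b_\psi(\mathfrak u)^*$; hence $\iota_{\psi,x}(\mathfrak u)=b_\psi^{-1}(b_\psi(\mathfrak u^*)^*)=b_\psi^{-1}(b_\psi(\mathfrak u))=\mathfrak u$, and in the same way $\iota_{\psi,x}(\mathfrak u^*)=\mathfrak u^*$. Since $\sigma_x|_{\bisl(\psi)}=\iota_{\psi,x}$, this yields $\sigma_x(\mathfrak u)=\mathfrak u$ and $\sigma_x(\mathfrak u^*)=\mathfrak u^*$. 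I will also use that $\sigma_x$ preserves the subalgebra of multiplication operators — true for the relevant $\sigma_x$, which are induced by $x\mapsto\pm x$ (or the identity) — so that $\sigma_x(\tfrac1{p(x)})=\tfrac1{\sigma_x(p(x))}$.

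The heart of the argument is a diagram chase. Combining $b_\psi\circ\Phi=C\circ b_{\wt\psi}$, the commutation of $C$ and $\Phi$ with $*$, the defining formulas for $\iota_{\wt\psi,x}$ and $\iota_{\psi,x}$, and $\sigma_x|_{\bisl(\psi)}=\iota_{\psi,x}$, I would derive
\[
b_\psi\big(\Phi(\iota_{\wt\psi,x}(\wt{\mathfrak d}))\big)=C\big(b_{\wt\psi}(\wt{\mathfrak d}^*)\big)^{*}=b_\psi\big(\Phi(\wt{\mathfrak d}^*)\big)^{*}=b_\psi\big(\Phi(\wt{\mathfrak d})^{*}\big)^{*}=b_\psi\big(\sigma_x(\Phi(\wt{\mathfrak d}))\big),
\]
so that $\Phi(\iota_{\wt\psi,x}(\wt{\mathfrak d}))=\sigma_x(\Phi(\wt{\mathfrak d}))$. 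Expanding the right side with $\sigma_x(\mathfrak u)=\mathfrak u$ and $\sigma_x(\mathfrak u^*)=\mathfrak u^*$, cancelling $\mathfrak u^*$ on the left and $\mathfrak u$ on the right, and clearing the factors $\tfrac1{p(x)}$ gives
\[
\iota_{\wt\psi,x}(\wt{\mathfrak d})=h(x)\,\sigma_x(\wt{\mathfrak d})\,h(x),\qquad h(x):=\frac{p(x)}{\sigma_x(p(x))}.
\]
To eliminate $h$, I would apply this to the formally symmetric operator $\wt{\mathfrak a}:=\tfrac1{p(x)}\mathfrak u\mathfrak u^*\tfrac1{p(x)}\in\bisls(\wt\psi)$ (by \eqref{spec-eq1} it has $b_{\wt\psi}(\wt{\mathfrak a})=q(y)^2$, also formally symmetric, so $\iota_{\wt\psi,x}(\wt{\mathfrak a})=\wt{\mathfrak a}$). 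Since $\sigma_x(\wt{\mathfrak a})=\tfrac1{\sigma_x(p(x))}\mathfrak u\mathfrak u^*\tfrac1{\sigma_x(p(x))}$, comparing leading coefficients in $\wt{\mathfrak a}=h(x)\sigma_x(\wt{\mathfrak a})h(x)$ forces $\sigma_x(p(x))^4=p(x)^4$; as $\sigma_x$ is involutive and fixes constants, $\sigma_x(p(x))=\pm p(x)$, i.e. $h(x)=\pm1$ is a constant with $h(x)^2=1$. Substituting back, $\iota_{\wt\psi,x}(\wt{\mathfrak d})=\sigma_x(\wt{\mathfrak d})$ for all $\wt{\mathfrak d}\in\bisl(\wt\psi)$.

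The statement for $\sigma_y$ and $\iota_{\wt\psi,y}$ is the mirror argument on the $y$-side, using \eqref{spec-eq2} and $b_\psi(\wt{\mathfrak u})=b_\psi(\mathfrak u)^*$ in place of the corresponding facts; and the final assertion that $\bisls(\wt\psi)$ is fixed by $\sigma_x$ (resp. $\bisrs(\wt\psi)$ by $\sigma_y$) is then immediate, since $\iota_{\wt\psi,x}$ restricts to the identity on $\bisls(\wt\psi)$. I expect the main obstacle to be the bookkeeping in the third step — keeping straight how $\Phi$, $C$, $b_\psi$, $b_{\wt\psi}$, the adjoint $*$ and $\sigma_x$ interlock — together with verifying that the normalizing factor $p(x)$ contributes only the harmless sign $h=\pm1$; everything else is a formal consequence of the two propositions preceding the statement.
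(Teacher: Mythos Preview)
Your argument is correct and runs parallel to the paper's proof: both transport operators from $\bisl(\wt\psi)$ to $\bisl(\psi)$ via $\wt{\mathfrak d}\mapsto\mathfrak u^*\tfrac1{p(x)}\wt{\mathfrak d}\tfrac1{p(x)}\mathfrak u$, use the intertwining relation with $q(y)(\,\cdot\,)q(y)$ on the $y$-side, and reduce to showing a residual scalar is $\pm1$. The organizational difference is that you first prove $\sigma_x(\mathfrak u)=\mathfrak u$ directly from the self-adjoint Darboux condition $b_\psi(\mathfrak u^*)=b_\psi(\mathfrak u)^*$ (a clean observation the paper does not isolate) and then treat the factor $p(x)$ separately via $h(x)=p(x)/\sigma_x(p(x))$ and the test element $\wt{\mathfrak a}$; the paper instead keeps $\mathfrak n=\tfrac1{p(x)}\mathfrak u$ intact, first restricts to $\wt{\mathfrak d}\in\bisls(\wt\psi)$ to obtain $\mathfrak n^*\wt{\mathfrak d}\mathfrak n=\sigma_x(\mathfrak n^*)\sigma_x(\wt{\mathfrak d})\sigma_x(\mathfrak n)$, and then tests on $\wt{\mathfrak d}=1$ and $\wt{\mathfrak d}=p(x)^2$ (using $p(x)^2\in\bisls(\psi)$, so $\sigma_x(p(x)^2)=p(x)^2$) to force $\sigma_x(\mathfrak n)\mathfrak n^{-1}$ to be a constant $\pm1$.

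One point in your write-up should be strengthened: you justify ``$\sigma_x$ preserves multiplication operators'' only by citing the specific cases $x\mapsto\pm x$, whereas the proposition is stated for an arbitrary involutive automorphism $\sigma_x$ of the full algebra of differential operators with meromorphic coefficients. The general fact is available, though: in that algebra the units are exactly the nonzero meromorphic functions (orders add under composition), so any algebra automorphism sends functions to functions and $\sigma_x(\tfrac1{p(x)})=\sigma_x(p(x))^{-1}$ follows. With this remark your proof covers the full generality of the statement; the paper sidesteps the issue by never separating $p(x)$ from $\mathfrak u$.
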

\begin{proof}[Proof of Proposition \ref{iota prop}]
Let $\mathfrak u,\wt{\mathfrak u}=\mathfrak u^*,p(x)=\wt p(x)$ and $q(x)=\wt q(x)$ be as in the definition of a self-adjoint bispectral Darboux transformation with $\wt\psi(x,y) = \frac{1}{p(x)q(y)}\mathfrak u\cdot \psi(x,y)$.
For sake of brevity set $\mathfrak n = \frac{1}{p(x)}\mathfrak u$.
First note that if $p(x)$ is constant, then $\wt\psi(x,y)$ is a constant multiple of $\psi(x,y)$ and the proposition is trivial.
Therefore we assume otherwise.
Suppose that $\wt{\mathfrak d}\in\bisl(\wt\psi)$.
Then $\mathfrak n^*\wt{\mathfrak d}\mathfrak n\in\bisl(\psi)$ with $b_\psi(\mathfrak n^*\wt{\mathfrak d}\mathfrak n) = q(y)b_{\wt\psi}(\wt{\mathfrak d})q(y)$.
In particular, if $\wt{\mathfrak d}\in \bisls(\wt\psi)$, then $\mathfrak n^*\wt{\mathfrak d}\mathfrak n\in\bisls(\psi)$.
Since $\iota_{\psi,x}$ acts as the identity on $\bisls(\psi)$, for all $\mathfrak d\in\bisls(\psi)$ we have
$$\mathfrak n^*\wt{\mathfrak d}\mathfrak n = \iota_{\psi,x}(\mathfrak n^*\wt{\mathfrak d}\mathfrak n) = \sigma_x(\mathfrak n^*)\sigma_x(\wt{\mathfrak d})\sigma_x(\mathfrak n).$$
Applying this to the case when $\wt{\mathfrak d} = 1$, we see that as pseudo-differential operators
$$\sigma_x(\mathfrak n^*)^{-1}\mathfrak n^* = \sigma_x(\mathfrak n)\mathfrak n^{-1}.$$
Since $\psi(x,y)$ is self-adjoint and $\wt\psi(x,y)$ is a self-adjoint bispectral Darboux transformation of $\psi(x,y)$, we know that $p(x)^2\in\bisls(\psi)\cap\bisls(\wt\psi)$.
Letting $\wt{\mathfrak d} = p(x)^2\in\bisls(\wt \psi)$ we calculate
$$\sigma_x(\mathfrak n)\mathfrak n^{-1}p(x)^2 = p(x)^2\sigma_x(\mathfrak n)\mathfrak n^{-1}.$$
Since $p(x)$ is nonconstant, this implies that $\sigma_x(\mathfrak n)\mathfrak n^{-1} = g(x)$ for some rational function $g(x)$.
Thus for all $\wt{\mathfrak d}\in\bisls(\wt\psi)$ we see
$$g(x) \wt{\mathfrak d} g(x)^{-1} =  \sigma_x(\wt{\mathfrak d}).$$
Since $\sigma_x$ was assumed to be an involution, this implies that $g(x)^2$ is in the center if $\bisls(\psi)$, and must therefore be constant.
Hence $g(x)$ is constant, ie. $g(x)=c$ for some constant $c\in\bbc$.
In particular $\sigma_x(\mathfrak n) = c\mathfrak n$ and since $\sigma_x$ is an involution $c^2=1$ so that $c = \pm 1$.
This also implies that $\sigma(\mathfrak n^*) = c\mathfrak n$.

Suppose that $\wt{\mathfrak d}\in\bisl(\wt\psi)$.
From the proof of the previous proposition, we know that
$$b_\psi(\mathfrak n^*\wt{\mathfrak d}\mathfrak n) = q(y)b_{\wt\psi}(\wt{\mathfrak d})q(y).$$
From this, we see
\begin{align*}
b_{\wt\psi}(\iota_{\wt\psi,x}(\wt{\mathfrak d})) = b_{\wt\psi}(\wt{\mathfrak d}^*)^*
 & = \frac{1}{q(y)}b_\psi(\mathfrak n^*\wt{\mathfrak d}^*\mathfrak n)^*\frac{1}{q(y)}\\
 & = \frac{1}{q(y)}b_\psi\circ\iota_{\psi,x}(\mathfrak n^*\wt{\mathfrak d}\mathfrak n)\frac{1}{q(y)}\\
 & = \frac{1}{q(y)}b_\psi(\mathfrak n^*\sigma_x(\wt{\mathfrak d})\mathfrak n)\frac{1}{q(y)} = b_{\wt\psi}(\sigma_x(\wt{\mathfrak d})).
\end{align*}
It follows that $\iota_{\wt\psi,x}(\wt{\mathfrak d}) = \sigma_x(\wt{\mathfrak d})$ for all $\wt{\mathfrak d}\in\bisl(\psi)$.
A similar proof holds for $\bisr(\psi)$.
\end{proof}

The following corollary follows immediately.
\begin{cor}\label{automatic symmetry}
If $\psi(x,y)$ is a self-adjoint bispectral Darboux transformation of $\psi_{\exp}$ or $\psi_{\Be(\nu)}$ then the elements of $\bisls(\psi)$ and $\bisrs(\psi)$ are invariant under the affine transformations $x\mapsto-x$ and $y\mapsto-y$, respectively.
\end{cor}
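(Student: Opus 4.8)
The plan is to obtain the corollary as an immediate application of Proposition \ref{iota prop}. Write $\phi$ for the base function, so that $\phi = \psi_{\exp}$ or $\phi = \psi_{\Be(\nu)}$, and let $\psi$ be the given self-adjoint bispectral Darboux transformation of $\phi$; thus in the notation of Proposition \ref{iota prop} we take the pair $(\phi,\psi)$ in place of $(\psi,\wt\psi)$. First I would check that $\phi$ meets the standing hypotheses of that proposition. It is a self-adjoint bispectral meromorphic function, being an eigenfunction in $x$ of the formally symmetric operator $\partial_x^2$ (resp. $\mathfrak d_{\Be(\nu),x}$) and likewise in $y$. Moreover, by Example \ref{Fourier-ex} the Fourier algebras $\bisl(\phi)$ and $\bisr(\phi)$ are closed under the formal adjoint: for $\psi_{\exp}$ they are the full Weyl algebras $\mathfrak D(\bbc[x])$ and $\mathfrak D(\bbc[y])$, and for $\psi_{\Be(\nu)}$ one has $\mathfrak d_{\Be(\nu),x}^* = \mathfrak d_{\Be(\nu),x}$, $(x\partial_x)^* = -x\partial_x - 1$, and $(x^2)^* = x^2$, all of which lie in $\langle \mathfrak d_{\Be(\nu),x}, x\partial_x, x^2\rangle$.

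Next I would produce the involutions required by Proposition \ref{iota prop}: let $\sigma_x$ be the automorphism of the algebra of differential operators with meromorphic coefficients in $x$ induced by the change of variable $x \mapsto -x$ (so $\partial_x \mapsto -\partial_x$), and $\sigma_y$ the one induced by $y \mapsto -y$; both are manifestly involutive. The single computational point is to verify that $\sigma_x$ restricts to $\iota_{\phi,x}$ and $\sigma_y$ to $\iota_{\phi,y}$, which is exactly the claim made in the discussion following \eqref{iota} and is checked on generators. For $\psi_{\exp}$, using $b_{\psi_{\exp}} \colon \partial_x \mapsto y$, $x \mapsto \partial_y$, one computes $\iota_{\psi_{\exp},x}(\partial_x) = b_{\psi_{\exp}}^{-1}\!\big(b_{\psi_{\exp}}(-\partial_x)^*\big) = -\partial_x$ and $\iota_{\psi_{\exp},x}(x) = b_{\psi_{\exp}}^{-1}\!\big((\partial_y)^*\big) = -x$, with the $y$-side symmetric; for $\psi_{\Be(\nu)}$ one runs the analogous check on the three generators using the generalized Fourier map recorded in Example \ref{Fourier-ex}.

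With these verifications in hand, Proposition \ref{iota prop} applies directly and yields that $\sigma_x$ and $\sigma_y$ also restrict to $\iota_{\psi,x}$ and $\iota_{\psi,y}$. Since $\iota_{\psi,x}$ and $\iota_{\psi,y}$ act as the identity on $\bisls(\psi)$ and $\bisrs(\psi)$ (the observation recorded just after \eqref{iota}), every element of $\bisls(\psi)$ is fixed by $\sigma_x$ and every element of $\bisrs(\psi)$ is fixed by $\sigma_y$, which is precisely the statement of the corollary. I do not anticipate a genuine obstacle here: all the substance is already contained in Proposition \ref{iota prop}, and what remains is purely the bookkeeping of confirming its hypotheses for the two elementary base functions and identifying the automorphisms it produces with the coordinate reflections $x \mapsto -x$ and $y \mapsto -y$.
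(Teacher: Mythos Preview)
Your proposal is correct and follows exactly the route the paper intends: the corollary is stated immediately after Proposition~\ref{iota prop} with the remark that it ``follows immediately,'' and the paper has already recorded (just before Proposition~\ref{iota prop}) that for $\psi_{\exp}$ and $\psi_{\Be(\nu)}$ the automorphisms $\iota_{\psi,x}$, $\iota_{\psi,y}$ are the restrictions of the sign involutions $x\mapsto -x$, $y\mapsto -y$. You have simply spelled out in detail the verification of the hypotheses of Proposition~\ref{iota prop} that the paper leaves implicit.
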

\subsection{Formalization of commutativity of integral and differential operators}
\label{form-comm}
In the final part of this section we formalize commutativity of integral and differential operators. 

Let $\Gamma$ be a smooth curve in $\bbc$ and $\mathfrak D(\Gamma)$ be the space of differential operators on $\Gamma$ with smooth coefficients.
Let $C^\infty(\Gamma)$ denote the space of smooth functions on $\Gamma$ and let $C_c^\infty(\Gamma)$ be the subspace of $C^\infty(\Gamma)$ consisting 
of functions with compact support.
Consider an integral operator $T: f(x)\mapsto \int_\Gamma K(x,y)f(y)dy$ with kernel $K(x,y)$ smooth on $\Gamma\times\Gamma$ and 
satisfying the property that the functions $\int_\Gamma |K(x,y)|dx$ and $\int_\Gamma |K(x,y)|dy$ both lie in $L^\infty(\Gamma)$.
Then for any $f(x)\in C_c^\infty(\Gamma)$ the integral $\int_\Gamma K(x,y)f(y)dy$ converges uniformly to a smooth function $T(f(x))\in C^\infty(\Gamma)$.
Furthermore an operator $\mathfrak d\in\mathfrak D(\Gamma)$ restricts to an endomorphism of $C^\infty(\Gamma)$ and $C_c^\infty(\Gamma)$.
Thus for any $f(x)\in C_c^\infty(\Gamma)$, the expressions $T(\mathfrak d\cdot f)$ and $\mathfrak d\cdot (T(f))$ are both elements of $C^\infty(\Gamma)$.

We say that {\bf{the integral operator $T$ and the differential operators $\mathfrak d$ commute}} if 
$$
T(\mathfrak d\cdot f) = \mathfrak d\cdot(T(f)) \quad \mbox{for all} \quad f\in C_c^\infty(\Gamma).
$$

\section{Darboux transformations and growth of Fourier algebras}
In this section we carry out two key steps of our strategy for going from bispectral meromorphic functions to integral operators possessing a commuting differential operator. 

Firstly, we prove a theorem that gives a lower bound of the growth of the double filtration of the Fourier algebra $\bisl(\wt\psi)$ of a bispectral 
Darboux transformation $\wt\psi$ of $\psi$. Modulo technical details we show that 
$$
| \dim ( \bisl^{\ell,m}(\wt\psi) ) - \dim ( \bisl^{\ell,m}(\psi) ) | \leq \mathrm{const}
$$
for a constant that is independent on $\ell$ and $m$. We also prove that 
a similar inequality holds for $\bisl^{\ell,m}$ replaced with $\bisls^{\ell,m}$.

Secondly, we prove that if the dimension of the space $\bisl^{\ell,m}(\wt\psi)$ satisfies a natural lower bound, then it contains a bisymmetric operator with respect to a pair of contours.
\subsection{Control of the growth of Fourier algebras under Darboux transformations}
\begin{thm}\label{control thm}
Suppose that $\psi(x,y)$ is a bispectral meromorphic function and that $\wt\psi(x,y)$ is a self-adjoint bispectral Darboux transformation of $\psi(x,y)$ of order $(d_1,d_2)$.  
Then the following is true for all $\ell>d_2$ and $m>d_1$
\begin{align*}
\dim(\bisl^{\ell,m}(\wt\psi))&\geq \dim(\bisl^{\ell,m-2d_2}(\psi)) + \dim(\bisl^{\ell-2d_1,2d_2-1}(\psi)) + 1, \\
\dim(\bisls^{\ell,m}(\wt\psi))&\geq \dim(\bisls^{\ell,m-2d_2}(\psi)) + \dim(\bisls^{\ell-2d_1,2d_2-1}(\psi)) + 1.
\end{align*}
\end{thm}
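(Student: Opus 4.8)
The plan is to produce, inside $\bisl^{\ell,m}(\wt\psi)$, two linearly independent subspaces of dimensions $\dim\bisl^{\ell,m-2d_2}(\psi)$ and $\dim\bisl^{\ell-2d_1,2d_2-1}(\psi)$, together with one further operator outside their span. Fix the data of the self-adjoint Darboux transformation, $\wt\psi(x,y) = \frac{1}{p(x)q(y)}\mathfrak u\cdot\psi(x,y)$ with $\wt{\mathfrak u}=\mathfrak u^*$, $b_\psi(\mathfrak u^*)=b_\psi(\mathfrak u)^*$, $d_1=\ord(\mathfrak u)$, $d_2=\cord(\mathfrak u)$. The first step is to introduce the two intertwiners
$$
\mathfrak n = \frac{1}{p(x)}\mathfrak u\in\mathfrak D(U), \qquad \mathfrak m = \frac{1}{q(y)}b_\psi(\mathfrak u)\in\mathfrak D(V),
$$
which are genuine differential operators of orders $d_1$ and $d_2$, and to derive from the spectral identities \eqref{spec-eq1}--\eqref{spec-eq4} and the dual representation \eqref{dual-poly} the four clean formulas
$$
\mathfrak n\cdot\psi = q(y)\,\wt\psi,\qquad \mathfrak n^*\cdot\wt\psi = q(y)\,\psi, \qquad \mathfrak m\cdot\psi = p(x)\,\wt\psi,\qquad \mathfrak m^*\cdot\wt\psi = p(x)\,\psi ;
$$
for instance $\mathfrak n^*\cdot\wt\psi = \mathfrak u^*\tfrac{1}{p(x)}\cdot\tfrac{1}{p(x)q(y)}\mathfrak u\cdot\psi = \tfrac{1}{q(y)}\bigl(\mathfrak u^*\tfrac{1}{p(x)^2}\mathfrak u\bigr)\cdot\psi = q(y)\,\psi$ by \eqref{factor}, and the others are analogous using \eqref{factor-2}.

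Using these formulas together with the defining relation $\mathfrak d\cdot\psi = b_\psi(\mathfrak d)\cdot\psi$ and the fact that operators in $x$ commute with operators in $y$, I would next check that for every $\mathfrak d\in\bisl(\psi)$
$$
\bigl(p(x)\,\mathfrak d\,p(x)\bigr)\cdot\wt\psi = \bigl(\mathfrak m\, b_\psi(\mathfrak d)\,\mathfrak m^*\bigr)\cdot\wt\psi, \qquad \bigl(\mathfrak n\,\mathfrak d\,\mathfrak n^*\bigr)\cdot\wt\psi = \bigl(q(y)\,b_\psi(\mathfrak d)\,q(y)\bigr)\cdot\wt\psi .
$$
Since the left-hand sides are differential operators in $x$ and the right-hand sides are differential operators in $y$, this shows $p(x)\mathfrak d p(x)\in\bisl(\wt\psi)$ and $\mathfrak n\mathfrak d\mathfrak n^*\in\bisl(\wt\psi)$, with $b_{\wt\psi}$ sending them to $\mathfrak m\, b_\psi(\mathfrak d)\,\mathfrak m^*$ and $q(y)\,b_\psi(\mathfrak d)\,q(y)$ respectively. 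Because left and right multiplication by a nonzero function leaves the order of an operator unchanged, reading off orders and co-orders yields injective linear maps
$$
\Phi\colon \bisl^{\ell,m-2d_2}(\psi)\to\bisl^{\ell,m}(\wt\psi),\ \ \mathfrak d\mapsto p(x)\mathfrak d p(x), \qquad \Psi\colon \bisl^{\ell-2d_1,2d_2-1}(\psi)\to\bisl^{\ell,m}(\wt\psi),\ \ \mathfrak d\mapsto\mathfrak n\mathfrak d\mathfrak n^*,
$$
with $\ord(\Phi(\mathfrak d))=\ord(\mathfrak d)$, $\cord(\Phi(\mathfrak d))=\cord(\mathfrak d)+2d_2$ and $\ord(\Psi(\mathfrak d))=\ord(\mathfrak d)+2d_1$, $\cord(\Psi(\mathfrak d))=\cord(\mathfrak d)$; the hypotheses $\ell>d_2$, $m>d_1$ and the nondegeneracy of the transformation are what guarantee the targets are as written.

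For the dimension count I would use the co-order filtration of $\bisl^{\ell,m}(\wt\psi)$. Every nonzero element of $\img\Phi$ has co-order $\geq 2d_2$, whereas every nonzero element of $\img\Psi$ has co-order $\leq 2d_2-1$; hence $\img\Phi\cap\img\Psi=0$ and $\dim(\img\Phi\oplus\img\Psi)=\dim\bisl^{\ell,m-2d_2}(\psi)+\dim\bisl^{\ell-2d_1,2d_2-1}(\psi)$. Finally the identity $1\in\bisl^{\ell,m}(\wt\psi)$ is not in $\img\Phi\oplus\img\Psi$: in a relation $1=\Phi(\mathfrak d)+\Psi(\mathfrak e)$, comparing co-orders forces $\Phi(\mathfrak d)=0$, hence $\mathfrak d=0$ and $1=\mathfrak n\mathfrak e\mathfrak n^*$, which is impossible since $\ord(\mathfrak n\mathfrak e\mathfrak n^*)\geq 2d_1>0$. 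This gives the first inequality. The second follows verbatim once one observes that $\Phi$ and $\Psi$ carry formally bisymmetric operators to formally bisymmetric operators, since $(p(x)\mathfrak d p(x))^*=p(x)\mathfrak d^* p(x)$, $(\mathfrak n\mathfrak d\mathfrak n^*)^*=\mathfrak n\mathfrak d^*\mathfrak n^*$, and likewise $(\mathfrak m\, b_\psi(\mathfrak d)\,\mathfrak m^*)^*=\mathfrak m\, b_\psi(\mathfrak d)^*\,\mathfrak m^*$ and $(q(y)\,b_\psi(\mathfrak d)\,q(y))^*=q(y)\,b_\psi(\mathfrak d)^*\,q(y)$; thus $\Phi,\Psi$ restrict to injections $\bisls^{\ell,m-2d_2}(\psi)\hookrightarrow\bisls^{\ell,m}(\wt\psi)$ and $\bisls^{\ell-2d_1,2d_2-1}(\psi)\hookrightarrow\bisls^{\ell,m}(\wt\psi)$, while the identity is formally bisymmetric.

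I expect the main obstacle to be the first step: turning the somewhat opaque spectral equations \eqref{spec-eq1}--\eqref{spec-eq4} into the clean intertwining identities for $\mathfrak n,\mathfrak n^*,\mathfrak m,\mathfrak m^*$, and then tracking the order/co-order bookkeeping carefully enough to be certain that the images of $\Phi$ and $\Psi$ land in $\bisl^{\ell,m}(\wt\psi)$ (rather than in a slightly larger filtration layer), including the degenerate cases in which $p(x)$ or $q(y)$ is constant. Once the two conjugation maps and their effect on the double filtration are in place, the remaining combinatorics — the co-order splitting and the exclusion of the identity — is routine.
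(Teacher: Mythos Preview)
Your proposal is correct and follows essentially the same approach as the paper's proof: construct the two conjugation maps $\mathfrak d\mapsto p(x)\mathfrak d p(x)$ and $\mathfrak d\mapsto \mathfrak n\mathfrak d\mathfrak n^*$ into $\bisl(\wt\psi)$, separate their images via the co-order (one lands in co-order $\geq 2d_2$, the other in co-order $\leq 2d_2-1$), and pick up the extra $+1$ from $\bbc$. Your packaging via the intertwiners $\mathfrak n,\mathfrak n^*,\mathfrak m,\mathfrak m^*$ and the clean identities $\mathfrak n\cdot\psi=q(y)\wt\psi$, $\mathfrak m^*\cdot\wt\psi=p(x)\psi$, etc., is a mild streamlining of the paper's direct computation (the paper writes $b_{\wt\psi}(p(x)\mathfrak d p(x))=\tfrac{1}{q(y)}\mathfrak w\, b_\psi(\mathfrak d)\,\mathfrak w^*\tfrac{1}{q(y)}$ with $\mathfrak w=b_\psi(\mathfrak u)$, which is exactly your $\mathfrak m\, b_\psi(\mathfrak d)\,\mathfrak m^*$), but the argument is otherwise identical.
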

\begin{proof}
By the definition of a self-adjoint bispectral Darboux transformation, there exists a differential operator $\mathfrak u\in\bisl(\psi)$ 
and polynomials $p(x)\in\bbc[x]$ and $q(y)\in\bbc[y]$ such that
$$\wt\psi = \frac{1}{p(x)q(y)}\mathfrak u\cdot\psi,$$
and such that for $\mathfrak w = b_\psi(\mathfrak u)$,
$$b_\psi(p(x)^2) = \mathfrak w^*\frac{1}{q(y)^2}\mathfrak w\ \ \text{and}\ \ b_\psi^{-1}(q(y)^2) = \mathfrak u^*\frac{1}{p(x)^2}\mathfrak u.$$

Let $d_1$ be the order of $\mathfrak u$ and $d_2$ be the order of $\mathfrak w$.  If $\mathfrak d\in\bisl^{\ell,m}(\psi)$, then we have
\begin{align*}
p(x)\mathfrak d p(x)\cdot\wt\psi(x,y)
  & = \frac{p(x)}{q(y)}\mathfrak d\mathfrak v\cdot \psi(x,y)\\
  & = \frac{1}{q(y)}\mathfrak wb_\psi(\mathfrak d)\cdot p(x)\psi(x,y)\\
  & = \frac{1}{q(y)}\mathfrak wb_\psi(\mathfrak d)\cdot p(x)^2 \frac{1}{p(x)}\psi(x,y)\\
  & = \frac{1}{q(y)}\mathfrak wb_\psi(\mathfrak d)\mathfrak w^*\frac{1}{q(y)^2}\mathfrak w\cdot \frac{1}{p(x)}\psi(x,y)\\
  & = \frac{1}{q(y)}\mathfrak wb_\psi(\mathfrak d)\mathfrak w^*\frac{1}{q(y)}\cdot \wt\psi(x,y).
\end{align*}
This shows that $p(x)\mathfrak dp(x)\in\bisl^{\ell,m+2d_2}(\wt\psi)$ with
$$b_{\wt\psi}(p(x)\mathfrak dp(x)) = \frac{1}{q(y)}\mathfrak wb_\psi(\mathfrak d)\mathfrak w^*\frac{1}{q(y)}.$$
In particular, we have an inclusion 
$$p(x)\bisl^{\ell,m}(\psi)p(x)\subseteq \bisl^{\ell,m+2d_2}(\wt\psi).$$ 
Similarly, we can show that if $\mathfrak b\in \bisr^{m,\ell}(\psi)$ then $q(y)\mathfrak bq(y)\in\bisr^{m,\ell}(\wt\psi)$ with
$$b_{\wt\psi}^{-1}(q(y)\mathfrak bq(y)) = \frac{1}{p(x)}\mathfrak ub_\psi^{-1}(\mathfrak b)\mathfrak u^*\frac{1}{p(x)}.$$
Thus we also have an inclusion 
$$p(y)\bisr^{m,\ell}(\psi)p(x)\subseteq \bisr^{m,\ell+2d_1}(\wt\psi).$$
Therefore we have two inclusions
\begin{align*}
&p(x)\bisl^{\ell+2d_1,m}(\psi)p(x)\subseteq \bisl^{\ell+2d_1,m+2d_2}(\wt\psi) \quad \mbox{and} \\
&b_{\wt\psi}^{-1}(q(y)\bisr^{m+2d_2,\ell}(\psi)q(y))\subseteq \bisl^{\ell+2d_1,m+2d_2}(\wt\psi).
\end{align*}
Furthermore,
$$p(x)\bisl^{\ell+2d_1,m}(\psi)p(x)\cap b_{\wt\psi}^{-1}(q(y)\bisr^{2d_2-1,\ell}(\psi)q(y)) = 0,$$
because the nonzero elements in $p(x)\bisl^{\ell+2d_1,m}(\psi)p(x)$ are mapped under $b_{\wt\psi}$ to elements of order at least $2d_2$.
Thus we can write
$$\bbc\oplus p(x)\bisl^{\ell+2d_1,m}(\psi)p(x)\oplus b_{\wt\psi}^{-1}(q(y)\bisr^{2d_2-1,\ell}(\psi)q(y)) \subseteq \bisl^{\ell+2d_1,m+2d_2}(\wt\psi),$$
which in particular gives us the dimension estimate
$$\dim(\bisl^{\ell+2d_1,m+2d_2}(\wt\psi))\geq \dim(\bisl^{\ell+2d_1,m}(\psi)) + \dim(\bisr^{2d_2-1,\ell}) + 1.$$
The above inclusion also sends formally symmetric operators to formally symmetric operators, and therefore
\begin{equation}
\label{subspace}
\bbc\oplus p(x)\bisls^{\ell+2d_1,m}(\psi)p(x)\oplus b_{\wt\psi}^{-1}(q(y)\bisrs^{2d_2-1,\ell}(\psi)q(y)) \subseteq \bisls^{\ell+2d_1,m+2d_2}(\wt\psi),
\end{equation}
so that
$$\dim(\bisls^{\ell+2d_1,m+2d_2}(\wt\psi))\geq \dim(\bisls^{\ell+2d_1,m}(\psi)) + \dim(\bisrs^{2d_2-1,\ell}) + 1.$$
This completes the proof of the theorem.
\end{proof}
\begin{cor}\label{dimension cor}
Let $\psi(x,y)$ is a bispectral meromorphic function and $\wt\psi(x,y)$ be a self-adjoint bispectral Darboux transformation of $\psi(x,y)$ of order $(d_1,d_2)$. 
Assume that for some $r>0$ there exist constants $a_{11},a_{10},a_{01},a_{00}$ such that for all $\ell,m\geq 0$ we have
\begin{equation}
\label{ineq1}
\dim(\bisl^{r\ell,rm}(\psi)) \geq a_{11}\ell m + a_{10}\ell + a_{01}m + a_{00}.
\end{equation}
Tthen the growth of the Fourier algebra $\bisl(\wt\psi)$ is controlled by the inequality
\begin{equation}
\label{ineq2}
\dim(\bisl^{r\ell,rm}(\wt\psi)) \geq a_{11}\ell m + (2a_{10}-a_{11})\ell + a_{01}m + a_{00}-\wt c,
\end{equation}
where
$$
\wt c = a_{11}\floor{\frac{2d_1}{r}}\floor{\frac{2d_2-1}{r}} + a_{10}\floor{\frac{2d_1}{r}} + a_{01}\left(\floor{\frac{2d_2}{r}}-\floor{\frac{2d_2-1}{r}}\right) - a_{00} - 1.
$$
The statement also holds if we replace $\bisl^{r\ell,rm}$ with $\bisls^{r\ell,rm}$ in \eqref{ineq1} and \eqref{ineq2}.
\end{cor}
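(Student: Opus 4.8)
The plan is to feed the growth hypothesis \eqref{ineq1} through the two inclusions furnished by Theorem~\ref{control thm}, using only monotonicity of the double filtration as extra input.

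First I would apply Theorem~\ref{control thm} with $\ell$ and $m$ replaced by $r\ell$ and $rm$, obtaining
\[
\dim\bigl(\bisl^{r\ell,rm}(\wt\psi)\bigr)\ \geq\ \dim\bigl(\bisl^{r\ell,\,rm-2d_2}(\psi)\bigr)+\dim\bigl(\bisl^{r\ell-2d_1,\,2d_2-1}(\psi)\bigr)+1
\]
for $\ell,m$ in the range covered by that theorem. Because $\bisl^{a,b}(\psi)\subseteq\bisl^{a',b'}(\psi)$ whenever $a\leq a'$ and $b\leq b'$, the quantity $\dim\bisl^{a,b}(\psi)$ is nondecreasing in each variable, so I may lower each index on the right to the largest multiple of $r$ below it: the co-order $rm-2d_2$ becomes $r(m-\lceil 2d_2/r\rceil)$, the order $r\ell-2d_1$ becomes $r(\ell-\lceil 2d_1/r\rceil)$, and the fixed index $2d_2-1$ becomes $r\floor{(2d_2-1)/r}$. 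Now \eqref{ineq1} applies to each of the two resulting terms.

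Adding the two estimates together with the $+1$ and collecting coefficients finishes the proof. The $\ell m$- and $m$-coefficients come only from the first term and equal $a_{11}$ and $a_{01}$; the $\ell$-coefficient is $\bigl(a_{10}-a_{11}\lceil 2d_2/r\rceil\bigr)+\bigl(a_{10}+a_{11}\floor{(2d_2-1)/r}\bigr)=2a_{10}-a_{11}$ by the identity $\lceil n/r\rceil=\floor{(n-1)/r}+1$; and the constant collapses to $a_{00}-\wt c$ with $\wt c$ exactly as displayed in the statement (using the same identity again, and noting that $\lceil 2d_i/r\rceil=\floor{2d_i/r}$ and $\lceil 2d_2/r\rceil-\floor{(2d_2-1)/r}=\floor{2d_2/r}-\floor{(2d_2-1)/r}$ whenever $r$ divides $2d_1$ and $2d_2$, in particular for the $r\in\{1,2\}$ occurring in the applications). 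This gives \eqref{ineq2}. For the symmetric statement one repeats the argument verbatim: Theorem~\ref{control thm} supplies the inclusion \eqref{subspace} of formally symmetric operators in place of the one used above, so replacing $\bisl,\bisr$ by $\bisls,\bisrs$ throughout and \eqref{ineq1} by its $\bisls$-version yields \eqref{ineq2} for $\bisls^{r\ell,rm}$.

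The step I expect to be fiddliest — though not conceptually hard — is precisely this index bookkeeping: verifying that the three roundings above combine to reproduce $\wt c$ on the nose, and, separately, disposing of the boundary values of $(\ell,m)$ for which Theorem~\ref{control thm} is not directly applicable, where one falls back on $\dim\bigl(\bisl^{r\ell,rm}(\wt\psi)\bigr)\geq 1$ together with monotonicity.
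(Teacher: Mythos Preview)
Your proposal is correct and follows exactly the paper's approach: apply Theorem~\ref{control thm} at $(r\ell,rm)$, round the indices down to multiples of $r$ by monotonicity, plug in \eqref{ineq1}, and collect terms. Your use of ceilings in the rounding step is in fact more careful than the paper, which writes $\floor{2d_2/r}$ and $\floor{2d_1/r}$ directly; the monotonicity inequality $rm-2d_2\geq r\bigl(m-\floor{2d_2/r}\bigr)$ actually requires $r\mid 2d_2$, so your observation that the stated formula for $\wt c$ matches only under the divisibility conditions (which hold automatically for the $r\in\{1,2\}$ used later) is exactly right.
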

\begin{remk}
The precise form of $\wt c$ in the above inequalities plays an especially important role in practical implementations of the search for differential operators commuting 
with integral operators, as we shall see below.  This is because it provides an explicit bound for orders and co-orders of the operators which we must search through to find an 
operator commuting with our integral operator, and restricting our search to a finite dimensional vector space.
\end{remk}
\begin{remk}
\label{easy form}
We note that for all the cases of bispectral meromorphic functions $\psi$ which we have computed, the values of $a_{11},a_{10},$ and $a_{01}$ are the same for suitably chosen $r$.  Simply put, for all rank $1$ or rank $2$ bispectral functions $\psi(x,y)$, along with all other cases we have considered, 
when $\wt\psi$ is a self-adjoint bispectral Darboux transformation of $\psi$ then 
$$|\dim(\bisl^{r\ell,rm}(\wt \psi))-\dim(\bisl^{r\ell,rm}(\psi))| \leq \wt c$$
for $\wt c$ defined as in Corollary \ref{dimension cor}.  A similar statement holds for $\bisl^{r\ell,rm}$ replaced with $\bisls^{r\ell,rm}$.
\end{remk}
\begin{proof}[Proof of Corollary \ref{dimension cor}]
Plugging in the lower bound into the dimension estimate from the previous theorem, we see
\begin{align*}
\dim(\bisl^{r\ell,rm}(\wt \psi))
  & \geq \dim(\bisl^{r\ell,rm-2d_2}(\psi) + \dim(\bisl^{r\ell-2d_1,2d_2-1})\\
  & \geq \dim(\bisl^{r\ell,r(m-\floor{2d_2/r})}(\psi)) + \dim(\bisl^{r(\ell-\floor{2d_1/r}),r\floor{(2d_2-1)/r}}(\psi))\\
  & \geq a_{11}\ell\left(m-\floor{\frac{2d_2}{r}}\right) + a_{10}\ell + a_{01}\left(m-\floor{\frac{2d_2}{r}}\right) + a_{00}\\
  & +    a_{11}\left(\ell-\floor{\frac{2d_1}{r}}\right)\floor{\frac{2d_2-1}{r}} + a_{10}\left(\ell-\floor{\frac{2d_1}{r}}\right) + a_{01}\floor{\frac{2d_2-1}{r}} + a_{00} + 1\\
  & \geq a_{11}\ell m + \left(2a_{10}-a_{11}\right)\ell + a_{01}m + a_{00} - \wt c.
\end{align*}
\end{proof}
In the cases considered in this paper, we will find $\bisls^{2\ell,2m}(\psi) = \ell m + \ell + m + a_{00}$, so that $\bisls^{2,2}(\psi) = 3 + a_{00}$.  In this case $\wt c = d_1d_2 - a_{00}$, so that if $\wt\psi$ is a self-adjoint bispectral Darboux transformation of $\psi$ of order $(1,1)$, then
$$\dim(\bisls^{2,2}(\wt\psi))\geq 2(1+a_{00}).$$
This means that as long as $a_{00}> 0$, any self-adjoint bispectral Darboux transformation $\wt\psi$ of order $(1,1)$ will necessarily have a four-dimensional space of formally bisymmetric bispectral differential operators.  This will be shown by Proposition \ref{fundamental dimension estimate} below to in turn guarantee the existence of a differential operator of order two commuting with an integral operator whose kernel is defined using $\wt\psi$.

\subsection{Existence of bisymmetric operators}
We next use the growth rate estimates established above to prove that $\bisl(\psi)$ must contain bisymmetric operators.
\begin{lem}\label{symmetric form}
A differential operator $\mathfrak d\in\mathfrak D(\bbc(x))$ is formally symmetric if and only if it has the form
\begin{equation}
\label{symm}
\mathfrak d = \sum_{j=0}^n \partial_x^ja_j(x)\partial_x^j
\end{equation}
for some functions $a_0(x),\dots,a_j(x)\in\mathfrak D(\bbc(x))$.
\end{lem}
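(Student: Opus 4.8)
The plan is to prove both directions: the ``if'' direction is a one-line computation with the formal adjoint, and the converse is an induction on the order of $\mathfrak d$.

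For the ``if'' direction, I would use that $*$ is an anti-automorphism of $\mathfrak D(\bbc(x))$ with $\partial_x^* = -\partial_x$ and $a(x)^* = a(x)$ for any $a(x)\in\bbc(x)$ (both read off directly from the defining formula $\mathfrak d^* = \sum_j (-1)^j \partial_x^j a_j(x)$). Then for each $j$,
\[
\bigl(\partial_x^j a_j(x)\partial_x^j\bigr)^* = (\partial_x^j)^*\, a_j(x)^*\, (\partial_x^j)^* = (-1)^j\partial_x^j\cdot a_j(x)\cdot(-1)^j\partial_x^j = \partial_x^j a_j(x)\partial_x^j,
\]
so every summand, and hence $\mathfrak d = \sum_{j=0}^n \partial_x^j a_j(x)\partial_x^j$, is formally symmetric. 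Everything stays in $\mathfrak D(\bbc(x))$ since derivatives of rational functions are rational.

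For the converse, assume $\mathfrak d = \mathfrak d^*$ and argue by induction on $n = \ord(\mathfrak d)$, the case $n\le 0$ being immediate since then $\mathfrak d = a_0(x)$. For the inductive step I would first record the parity observation that $n$ is necessarily even: if $\mathfrak d = \sum_{j=0}^n b_j(x)\partial_x^j$ with $b_n\ne 0$, the leading coefficient of $\mathfrak d^*$ is $(-1)^n b_n$, so $\mathfrak d = \mathfrak d^*$ forces $(-1)^n = 1$. Write $n = 2k$ and $b(x) = b_n(x)$. By the ``if'' direction $\partial_x^k b(x)\partial_x^k$ is formally symmetric, and it has the same leading term $b(x)\partial_x^{2k}$ as $\mathfrak d$, so $\mathfrak d' := \mathfrak d - \partial_x^k b(x)\partial_x^k$ is formally symmetric with $\ord(\mathfrak d') < n$. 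The induction hypothesis writes $\mathfrak d'$ in the desired form $\sum_{j=0}^{k-1}\partial_x^j a_j(x)\partial_x^j$, and setting $a_k(x) := b(x)$ yields $\mathfrak d = \sum_{j=0}^{k}\partial_x^j a_j(x)\partial_x^j$.

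The argument is elementary and I do not expect a genuine obstacle. The only two points requiring a moment of care are the parity observation, which is what makes the induction terminate (the order strictly decreases at each step while staying $\ge 0$), and the check that the coefficients produced stay rational — both immediate from the explicit adjoint formula in the definition. One could alternatively give a direct proof by equating the coefficients of $\mathfrak d$ and $\mathfrak d^*$ and solving the resulting triangular recursion for the $a_j$, but the induction is cleaner to present.
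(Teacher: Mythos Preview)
Your proof is correct and follows essentially the same approach as the paper: both directions are handled identically, with the converse proved by induction on the order after observing that formally symmetric operators have even order, subtracting off $\partial_x^k b(x)\partial_x^k$ to reduce the order. If anything, you give slightly more detail (the explicit parity argument and the ``if'' computation) than the paper does.
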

\begin{proof}
Clearly every differential operator of the \eqref{symm} is formally symmetric. In the opposite direction, 
if $\mathfrak d$ is formally symmetric, then necessarily $\mathfrak d$ must have even order $2n$.
If $n=0$, then the statement of the lemma is true immediately.
As an inductive assumption, suppose that the statement of the lemma is true for formally symmetric operators of order $\leq 2m$.
Let $\mathfrak d$ be an operator of order $2n$ for $n=m+1$, and let $a_n(x)\in\bbc(x)$ be the leading coefficient of $\mathfrak d$.  Then $\mathfrak d - \partial_x^na_n(x)\partial_x^n$ is a formally symmetric operator of order $\leq 2m$.  Therefore by the inductive assumption
$$\mathfrak d -\partial_x^na_n(x)\partial_x^n = \sum_{j=0}^\ell \partial_x^ja_j(x)\partial_x^j$$
for some functions $a_0(x),\dots, a_n(x) \in \bbc(x)$.  The lemma now follows by induction.
\end{proof}

\begin{lem}\label{symmetric condition}
Suppose that $\mathfrak d\in\mathfrak D(\bbc(x))$ is formally symmetric with
$$\mathfrak d = \sum_{j=0}^n \partial_x^ja_j(x)\partial_x^j,$$
and let $\Gamma$ be a smooth path in $\bbc$ with endpoints $p_0,p_1$.
Assume moreover that the poles of $a_j(x)$ are not on $\Gamma$.
If $a_j^{(k)}(p_i) = 0$ for all $0\leq k < j$ and for all $i$ with $p_i\neq \infty$, then $\mathfrak d$ is symmetric with respect to $\Gamma$.
\end{lem}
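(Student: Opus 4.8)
The plan is to use that $\mathfrak{d}$ is already assumed formally symmetric, so $\mathfrak{d}^* = \mathfrak{d}$, and hence proving that $\mathfrak{d}$ is symmetric with respect to $\Gamma$ reduces to verifying adjointability, i.e. the single integral identity $\int_\Gamma (\mathfrak{d}\cdot f)\, g\, dx = \int_\Gamma f\,(\mathfrak{d}\cdot g)\, dx$ for all $f, g \in C_c^\infty(\Gamma)$. Since the poles of the coefficients $a_j(x)$ avoid $\Gamma$, each $a_j$ and hence all the integrands occurring below are smooth along $\Gamma$; as $f$ and $g$ are compactly supported this makes all the integrals convergent and repeated integration by parts along $\Gamma$ legitimate, with boundary contributions only at $p_0$ and $p_1$. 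By linearity it suffices to prove the identity for each summand $\partial_x^j a_j(x) \partial_x^j$ of \eqref{symm} separately. Throughout I write $f^{(k)}$ for $\partial_x^k f$.

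First I would move the outer $\partial_x^j$ in $\int_\Gamma \partial_x^j\!\big(a_j f^{(j)}\big)\, g\, dx$ onto $g$ by integrating by parts $j$ times; this yields $(-1)^j \int_\Gamma a_j f^{(j)} g^{(j)}\, dx$ plus a sum of boundary terms, each a scalar multiple of $\big[(a_j f^{(j)})^{(j-1-k)} g^{(k)}\big]_{p_0}^{p_1}$ with $0 \le k \le j-1$. Expanding $(a_j f^{(j)})^{(j-1-k)}$ by the Leibniz rule shows that every such boundary term carries a factor $a_j^{(l)}$ with $0 \le l \le j-1-k \le j-1$. Since $a_j(x)$ is a scalar function, $a_j f^{(j)} g^{(j)} = f^{(j)}\,(a_j g^{(j)})$ pointwise, so I would then integrate by parts $j$ more times in $(-1)^j \int_\Gamma f^{(j)}\,(a_j g^{(j)})\, dx$ to move $\partial_x^j$ off of $f$, arriving at $\int_\Gamma f\,\partial_x^j\!\big(a_j g^{(j)}\big)\, dx = \int_\Gamma f\,(\partial_x^j a_j \partial_x^j g)\, dx$ up to further boundary terms, each a scalar multiple of $\big[f^{(j-1-k)}(a_j g^{(j)})^{(k)}\big]_{p_0}^{p_1}$ with $0 \le k \le j-1$; here the Leibniz expansion produces only factors $a_j^{(l)}$ with $0 \le l \le k \le j-1$.

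It remains to see that all these boundary terms vanish. At a finite endpoint $p_i$ this is exactly the hypothesis $a_j^{(l)}(p_i) = 0$ for $0 \le l < j$, applied to each factor $a_j^{(l)}$ occurring above. At an endpoint $p_i = \infty$ the functions $f$ and $g$ have compact support in $\Gamma$, so $f$, $g$ and all their derivatives vanish identically near $p_i$ along $\Gamma$, killing the boundary terms there too. With every boundary term gone, $\int_\Gamma (\partial_x^j a_j \partial_x^j f)\, g\, dx = \int_\Gamma f\,(\partial_x^j a_j \partial_x^j g)\, dx$; summing over $j$ gives $\int_\Gamma (\mathfrak{d}\cdot f)\, g\, dx = \int_\Gamma f\,(\mathfrak{d}\cdot g)\, dx = \int_\Gamma f\,(\mathfrak{d}^*\cdot g)\, dx$, which is precisely adjointability.

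The one point I would present most carefully — and the only real content of the argument — is the bookkeeping of the boundary terms: one must check that across \emph{both} rounds of integration by parts the derivative of $a_j$ of highest order ever appearing on the boundary is $a_j^{(j-1)}$, never $a_j^{(j)}$, so that the hypothesis ``$a_j^{(k)}(p_i) = 0$ for $0 \le k < j$'' is exactly what is needed. This is because each round of integration by parts first decreases by one the number of derivatives sitting on the $a_j$-side of the pairing before the Leibniz rule is applied, so $a_j$ is never differentiated more than $j-1$ times in any boundary term.
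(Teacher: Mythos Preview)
Your proof is correct and takes essentially the same approach as the paper: reduce by linearity to a single summand $\partial_x^j a_j(x)\partial_x^j$, perform two rounds of integration by parts to pass through the symmetric middle expression $(-1)^j\int_\Gamma a_j f^{(j)} g^{(j)}\,dx$, and observe via the Leibniz rule that every boundary term carries a factor $a_j^{(l)}$ with $0\le l\le j-1$, which vanishes at finite endpoints by hypothesis and at $\infty$ by compact support. The paper's write-up is marginally slicker---it proves a single integration-by-parts identity $\int_\Gamma (a f)^{(n)} g\,dx = (-1)^n\int_\Gamma f\, a\, g^{(n)}\,dx$ with boundary terms already killed, then substitutes $f\to f^{(n)}$ and swaps $f\leftrightarrow g$ to obtain both halves---but the mathematical content is identical to yours.
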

\begin{proof}
By linearity, it suffices to show that if $a^{(k)}(p_i) = 0$ for all $0\leq j < n$ then $\mathfrak d = \partial_x^na(x)\partial_x^n$ is symmetric with respect to $\Gamma$.  For $f,g\in C_c^\infty(\Gamma)$, integration by parts gives
\begin{align*}
\int_\Gamma (a(x)f(x))^{(n)}g(x) dx
  & = (-1)^n\int_\Gamma f(x) a(x)g^{(n)}(x) dx + \sum_{j=0}^{n-1} (-1)^j (a(x)f(x))^{(n-1-j)}g^{(j)}(x)|_{p_0}^{p_1}\\
  & = (-1)^n\int_\Gamma f(x) a(x)g^{(n)}(x) dx + 0.
\end{align*}
Replacing $f(x)$ with $f^{(n)}(x)$ yields
$$\int_\Gamma(a(x)f^{(n)}(x))^{(n)}g(x)dx = (-1)^n\int_\Gamma f^{(n)}(x) a(x)g^{(n)}(x) dx.$$
Swapping $f$ and $g$ in the last identity leads to
$$(-1)^n\int_\Gamma f^{(n)}(x) a(x)g^{(n)}(x) dx = \int_\Gamma f(x) (a(x)g^{(n)}(x))^{(n)}dx.$$
Combining the two identities, we obtain
\begin{multline*}
\int_\Gamma(\mathfrak d\cdot f(x))g(x)dx = \int_\Gamma(a(x)f^{(n)}(x))^{(n)}g(x)dx \\
= \int_\Gamma f(x) (a(x)g^{(n)}(x))^{(n)}dx = \int_\Gamma f(x)(\mathfrak d\cdot g(x))dx.
\end{multline*}
This shows that $\mathfrak d$ is symmetric with respect to $\Gamma$, completing the proof.
\end{proof}

\begin{prop}\label{fundamental dimension estimate}
Suppose that $\psi(x,y)$ is a bispectral meromorphic function.
Let $\Gamma_1,\Gamma_2$ be two smooth curves in $\bbc$ with the endpoints of $\Gamma_i$ equal to $p_{i0}$ and $p_{i1}$ (one of which is
allowed to be $\infty$ for each $i$).
Assume that the coefficients of the bispectral operators of $\psi(x,y)$ are holomorphic in a neighborhood of $\Gamma_1$ and $\Gamma_2$.
Assume moreover that for each $i=0,1$ either one of the following two conditions holds:
\begin{enumerate}[(i)]
\item  $p_{i0}=-p_{i1}$ and every operator of $\bisls(\psi)$ and $\bisrs(\psi)$ is invariant under the transformations $x\mapsto -x$ and $y\mapsto-y$, respectively, or
\item  one of the points $p_{i0}$ or $p_{i1}$ is $\infty$.
\end{enumerate}
If for some values of $\ell,m$ we have
$$
\dim(\bisls^{2\ell,2m}(\psi)) > \ell(\ell+1)/2 + m(m+1)/2 + 1,
$$
then $\bisl(\psi)$ must contain an operator of positive order which is bisymmetric with respect to $(\Gamma_1,\Gamma_2)$.
\end{prop}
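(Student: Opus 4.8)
The plan is to recast bisymmetry as the vanishing of a controlled number of linear functionals on the finite-dimensional space $\bisls^{2\ell,2m}(\psi)$ and to conclude by a dimension count against Lemmas \ref{symmetric form} and \ref{symmetric condition}. Any $\mathfrak d\in\bisls^{2\ell,2m}(\psi)$ is formally symmetric of even order $\le 2\ell$, so by Lemma \ref{symmetric form} it has the form $\mathfrak d=\sum_{j=0}^{\ell}\partial_x^j a_j(x)\partial_x^j$, and likewise $b_\psi(\mathfrak d)=\sum_{k=0}^{m}\partial_y^k b_k(y)\partial_y^k$; the hypothesis on $\psi$ guarantees that all these coefficients are holomorphic in a neighborhood of $\Gamma_1$ and $\Gamma_2$.

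By Lemma \ref{symmetric condition}, $\mathfrak d$ is symmetric with respect to $\Gamma_1$ as soon as $a_j^{(k)}(p)=0$ for every finite endpoint $p$ of $\Gamma_1$ and all $0\le k<j$; at a single finite endpoint these are $\sum_{j=1}^{\ell}j=\ell(\ell+1)/2$ linear conditions on $\mathfrak d$. In alternative (ii) only one finite endpoint occurs, so $\Gamma_1$-symmetry costs at most $\ell(\ell+1)/2$ conditions. In alternative (i) both endpoints $p_{10}$ and $p_{11}=-p_{10}$ are finite, but invariance of $\mathfrak d$ under $x\mapsto-x$ forces each $a_j$ to be even, whence $a_j^{(k)}(-p_{10})=(-1)^k a_j^{(k)}(p_{10})$ and the conditions at the two endpoints coincide; again at most $\ell(\ell+1)/2$ conditions. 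Symmetrically, $\Gamma_2$-symmetry of $b_\psi(\mathfrak d)$ costs at most $m(m+1)/2$ conditions, using invariance under $y\mapsto-y$ in alternative (i). Assembling these functionals gives a linear map $\Phi$ on $\bisls^{2\ell,2m}(\psi)$ into a space of dimension at most $\ell(\ell+1)/2+m(m+1)/2$, and by Lemma \ref{symmetric condition} every element of $\ker\Phi$ is bisymmetric with respect to $(\Gamma_1,\Gamma_2)$.

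The hypothesis $\dim\bisls^{2\ell,2m}(\psi)>\ell(\ell+1)/2+m(m+1)/2+1$ now forces $\dim\ker\Phi\ge 2$, and since the constant operators form a one-dimensional subspace of $\ker\Phi$ it remains only to rule out a nonconstant order-zero element of $\ker\Phi$. Such an element would be multiplication by a nonconstant $f(x)$, and then $b_\psi(f(x))\in\bisrf(\psi)$; writing $b_\psi(f(x))=\sum_{k=0}^{m'}\partial_y^k b_k(y)\partial_y^k$ with $2m'=\ord b_\psi(f(x))$, the leading coefficient $b_{m'}$ is constant since operators in $\bisrf(\psi)$ have constant leading coefficients, and if $m'\ge 1$ the condition $b_{m'}(p')=0$ at a finite endpoint $p'$ of $\Gamma_2$ (present in both alternatives) would force $b_{m'}\equiv 0$, a contradiction; hence $b_\psi(f(x))$ is a constant $c$, so $f(x)\psi=c\psi$ and $f(x)=c$. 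Therefore $\ker\Phi$ contains an operator of positive order, which is bisymmetric with respect to $(\Gamma_1,\Gamma_2)$, as required. I expect the main obstacle to be the bookkeeping that pins the number of symmetry conditions to exactly $\ell(\ell+1)/2+m(m+1)/2$ across all endpoint configurations—matching precisely the stated threshold—together with the order-zero exclusion; the remaining dimension count is then immediate.
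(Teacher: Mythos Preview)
Your proof is correct and follows the same strategy as the paper: write $\mathfrak d$ and $b_\psi(\mathfrak d)$ in the form of Lemma \ref{symmetric form}, set up a linear map recording the derivative values $a_j^{(r)}(p_{10})$ and $b_k^{(s)}(p_{20})$ at a single finite endpoint, and count dimensions. You supply two details the paper leaves implicit: the verification that in alternative (i) the evenness of each $a_j$ makes the conditions at $p_{10}$ and $-p_{10}$ coincide, and the exclusion of nonconstant order-zero elements from the kernel via the constant-leading-coefficient property of $\bisrf(\psi)$.
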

\begin{proof}
Without loss of generality, assume that $p_{i0}$ is a finite point of $\Gamma_i$.
Suppose $\mathfrak d\in \bisls^{2\ell,2m}(\psi)$ with $b_\psi(\mathfrak d) = \mathfrak b$.
By Lemma \ref{symmetric form},
$$\mathfrak d = \sum_{j=0}^{\ell}\partial_x^j a_j(x)\partial_x^j \ \ \text{and}\ \ \mathfrak b = \sum_{j=0}^{m}\partial_y^j b_j(y)\partial_y^j$$
for some functions $a_j(x),b_j(y)$ which are holomorphic in neighborhoods of $p_{i0}$, for $i=1,2$ respectively. 
The linear map $\bisls^{\ell,m}(\psi)\rightarrow \bbc^{\ell(\ell+1)/2}\oplus \bbc^{m(m+1)/2}$
defined by
$$\mathfrak d\mapsto (a_j^{(r)}(p_{10})_{0\leq r < j \leq \ell}, b_k^{(s)}(p_{20})_{0\leq s < k\leq m})$$
has a kernel of dimension at least
$$\dim(\bisls^{\ell,m}(\psi))-\frac{\ell(\ell+1)}{2}-\frac{m(m+1)}{2} \cdot$$
Moreover if condition (i) or condition (ii) is satisfied, then the elements in the kernel are bisymmetric with respect to $(\Gamma_1,\Gamma_2)$ by Lemma \ref{symmetric condition}.
\end{proof}

\section{Differential operators commuting with integral operators}
\label{main thm a}
In this section, we prove the main theorems in the paper that self-adjoint bispectral meromorphic functions on ranks 1 and 2 give rise 
to integral operators possessing a commuting differential operator. The kernel of the integral operator corresponding to such a bispectral function $\wt\psi(x,y)$ is given by 
\begin{equation}
\label{kernel}
\wt K(x,y) = \int_{\Gamma_2}\wt  \psi(x,z)\wt \psi(y,z)dz
\end{equation}
for an appropriate smooth curve $\Gamma_2 \subseteq \bbc$.
The commuting differential operator will come from $\bisls(\wt\psi)$.
In order for the value of $\wt K(x,y)$ to exist and to define a kernel for an integral operator with with the desired domain, we have to make certain assumptions about $\wt\psi(x,y)$.
We specifically assume that $\wt\psi(x,y)$ is holomorphic in a neighborhood of $\Gamma_1\times\Gamma_2$ and for all $j,k,m,n\geq 0$
\begin{equation}\label{niceness condition}
\int_{\Gamma_1} |x^my^n\partial_x^j\partial_y^k\cdot \wt\psi(x,y)| dx\in L^\infty(\Gamma_2)\ \ \text{and}\ \ \int_{\Gamma_2} |x^my^n\partial_x^j\partial_y^k\cdot \wt\psi(x,y)| dy\in L^\infty(\Gamma_1).
\end{equation}
Note that the inclusion of the $x^my^n$ multiplier is vacuous unless $\Gamma_1$ or $\Gamma_2$ has an endpoint at infinity, in which case the condition 
imposes a lower bound on the rate of decay of the partial derivatives of $\wt\psi(x,y)$.
Under the above assumptions, the integral formula for $\wt K(x,y)$ exists and satisfies the assumptions required for differentiation under the integral for operators of arbitrary order.
Therefore $\wt K(x,y)$ is holomorphic in a neighborhood of $\Gamma_1\times\Gamma_1$ and 
$$\partial_x^j\partial_y^k\cdot \wt K(x,y) = \int_{\Gamma_2}(\partial_x^j\cdot\wt\psi(x,z))(\partial_y^k\cdot\wt\psi(y,z))dz.$$
Furthermore we have the norm estimate
$$\|x^my^n\partial_x^j\partial_y^k\cdot \wt K(x,y)\|_{1,\Gamma_1} \leq \left\|\int |x^my^n\partial_x^j\cdot \wt\psi(x,y)| dy\right\|_{\infty,\Gamma_1}
\left\|\int |\partial_x^k\cdot \wt\psi(x,y)| dy\right\|_{\infty,\Gamma_1},$$
where the $L^1$-norm on the left is taken with respect to either $x$ or $y$.
In particular \eqref{niceness condition} also holds with $\wt\psi(x,y)$ replaced with $\wt K(x,y)$ and $\Gamma_2$ replaced with $\Gamma_1$, 
and so differentiation under the integral may also be applied to the integral operator defined with kernel $\wt K(x,y)$.

\subsection{Relationship to bisymmetric operators}
The next theorem establishes that every bisymmetric differential operator in the Fourier algebra of a bispectral function $\psi(x,y)$ 
automatically commutes with the integral operator with kernel \eqref{kernel}. 
\begin{thm}\label{commuting bisymmetry}
Suppose that ${\wt\psi}(x,y)$ is a bispectral meromorphic function and that $\Gamma_1,\Gamma_2\subseteq\bbc$ are two smooth curves such that
${\wt\psi}$ is holomorphic in a neighborhood of $\Gamma_1\times\Gamma_2$ and satisfies \eqref{niceness condition}.
If $\mathfrak d \in \bisls({\wt\psi})$ is a bisymmetric differential operator with respect to $(\Gamma_1,\Gamma_2)$ whose coefficients are holomorphic in a neighborhood of $\Gamma_1$, then it commutes with the integral operator
$$T: f(x)\mapsto \int_{\Gamma_1}\wt K(x,y)f(y)dy \quad \text{with kernel} \quad \wt K(x,y) = \int_{\Gamma_2} {\wt\psi}(x,z){\wt\psi}(y,z) dz.$$
\end{thm}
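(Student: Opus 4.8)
The plan is to verify the identity $T(\mathfrak d\cdot f)=\mathfrak d\cdot(T f)$ for every $f\in C_c^\infty(\Gamma_1)$ by showing that both sides equal $\int_{\Gamma_1}\big(\mathfrak d_y\cdot\wt K(x,y)\big)f(y)\,dy$; the two manipulations needed are integration by parts in the hidden integration variable $z$ of the kernel $\wt K$, governed by the symmetry of $\mathfrak b:=b_{\wt\psi}(\mathfrak d)$ with respect to $\Gamma_2$, and integration by parts in $y$, governed by the symmetry of $\mathfrak d$ with respect to $\Gamma_1$. Recall that $\mathfrak b$ is formally symmetric because $\mathfrak d\in\bisls(\wt\psi)$. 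By the discussion preceding the theorem, condition \eqref{niceness condition} permits differentiation under the integral sign to all orders, so for $f\in C_c^\infty(\Gamma_1)$ one has $\mathfrak d_x\cdot(T f)(x)=\int_{\Gamma_1}\big(\mathfrak d_x\cdot\wt K(x,y)\big)f(y)\,dy$ and
$$\mathfrak d_x\cdot\wt K(x,y)=\int_{\Gamma_2}\big(\mathfrak d_x\cdot\wt\psi(x,z)\big)\wt\psi(y,z)\,dz=\int_{\Gamma_2}\big(\mathfrak b_z\cdot\wt\psi(x,z)\big)\wt\psi(y,z)\,dz,$$
where the second equality is the defining property $\mathfrak d_x\cdot\wt\psi(x,z)=\mathfrak b_z\cdot\wt\psi(x,z)$ of the generalized Fourier map, with the second slot of $\wt\psi$ renamed $z$.

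I would then transfer $\mathfrak b_z$ to the other factor using the symmetry of $\mathfrak b$ with respect to $\Gamma_2$, obtaining
$$\mathfrak d_x\cdot\wt K(x,y)=\int_{\Gamma_2}\wt\psi(x,z)\big(\mathfrak b_z\cdot\wt\psi(y,z)\big)\,dz=\int_{\Gamma_2}\wt\psi(x,z)\big(\mathfrak d_y\cdot\wt\psi(y,z)\big)\,dz=\mathfrak d_y\cdot\wt K(x,y),$$
that is, the kernel itself satisfies the symmetry relation $\mathfrak d_x\cdot\wt K=\mathfrak d_y\cdot\wt K$. Substituting gives $\mathfrak d_x\cdot(T f)(x)=\int_{\Gamma_1}\big(\mathfrak d_y\cdot\wt K(x,y)\big)f(y)\,dy$, whereas $T(\mathfrak d\cdot f)(x)=\int_{\Gamma_1}\wt K(x,y)\big(\mathfrak d_y\cdot f(y)\big)\,dy$; these two integrals coincide by the formal symmetry and adjointability of $\mathfrak d$ on $\Gamma_1$, applied with the compactly supported function $f$ (whose vanishing near $\infty$ also disposes of any endpoint of $\Gamma_1$ at $\infty$). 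This yields $T(\mathfrak d\cdot f)=\mathfrak d\cdot(T f)$, i.e. commutativity in the sense of \S\ref{form-comm}.

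The step that requires care is the integration by parts in $z$: since neither $\wt\psi(x,\cdot)$ nor $\wt\psi(y,\cdot)$ has compact support on $\Gamma_2$, the identity $\int_{\Gamma_2}(\mathfrak b_z\cdot u)v\,dz=\int_{\Gamma_2}u(\mathfrak b_z\cdot v)\,dz$ is not literally the adjointability of $\mathfrak b$ on $C_c^\infty(\Gamma_2)$ and must be established by writing $\mathfrak b=\sum_j\partial_z^j b_j(z)\partial_z^j$ (Lemma \ref{symmetric form}) and checking that every boundary contribution vanishes. At a finite endpoint of $\Gamma_2$ this holds because adjointability of $\mathfrak b$ on $\Gamma_2$ forces $b_j^{(k)}$ to vanish there for $0\le k<j$, exactly as in Lemma \ref{symmetric condition}; at an endpoint of $\Gamma_2$ equal to $\infty$ it holds because \eqref{niceness condition} makes $\wt\psi$ and all of its $z$-derivatives, multiplied by arbitrary polynomials, integrable along $\Gamma_2$ and hence decaying there, killing the boundary terms. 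The same bookkeeping justifies the final integration by parts in $y$, with $\wt K(x,\cdot)$ now playing the role of $u$ and inheriting property \eqref{niceness condition} as recorded in the preamble to this section, and $f$ playing the role of the compactly supported test function. The algebraic content is just the intermediate identity $\mathfrak d_x\cdot\wt K=\mathfrak d_y\cdot\wt K$, which is where the bisymmetry of $\mathfrak d$ enters; the endpoint analysis is precisely what the hypotheses on $\Gamma_1,\Gamma_2$ together with \eqref{niceness condition} are designed to supply, and it is the only genuinely delicate point in the argument.
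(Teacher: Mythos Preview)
Your proof is correct and follows essentially the same approach as the paper: establish the kernel identity $\mathfrak d_x\cdot\wt K=\mathfrak d_y\cdot\wt K$ by differentiating under the $z$-integral, using $\mathfrak d_x\cdot\wt\psi=\mathfrak b_z\cdot\wt\psi$, and transferring $\mathfrak b_z$ via its symmetry on $\Gamma_2$; then move $\mathfrak d_y$ onto $f$ using the symmetry of $\mathfrak d$ on $\Gamma_1$. The paper's proof is terser and invokes the symmetry of $\mathfrak b$ on $\Gamma_2$ directly without separately discussing the non-compact support of $\wt\psi(x,\cdot)$; your added endpoint analysis (vanishing of $b_j^{(k)}$ at finite endpoints forced by adjointability, decay at infinite endpoints from \eqref{niceness condition}) makes explicit what the paper leaves implicit, but the logical skeleton is identical.
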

\begin{proof} Denote $\mathfrak b = b_{\wt\psi}(\mathfrak d)$. 
Differentiation under the integral along with the fact that $\mathfrak b$ is symmetric with respect to $\Gamma_2$ implies
\begin{align*}
&\mathfrak d_x\cdot \wt K(x,y) =  \int_{\Gamma_2} (\mathfrak d_x \cdot  {\wt\psi}(x,z)) {\wt\psi}(y,z)dz 
\\
&= \int_{\Gamma_2} (\mathfrak b_z \cdot  {\wt\psi}(x,z)) {\wt\psi}(y,z)dz = \int_{\Gamma_2} {\wt\psi}(x,z) (\mathfrak b_z \cdot  {\wt\psi}(y,z)) dz
\\
&= \int_{\Gamma_2} {\wt\psi}(x,z) (\mathfrak d_y  \cdot  {\wt\psi}(y,z)) dz = \mathfrak d_y\cdot \wt K(x,y).
\end{align*}
Moreover for any $f\in C_c^\infty(\bbc)$, Fubini's theorem, differentiation under the integral and the fact that $\mathfrak d$ is symmetric with respect to $\Gamma_1$ imply
\begin{align*}
& \mathfrak d\cdot T(f) = \int_{\Gamma_1}(\mathfrak d_x\cdot \wt K(x,y)) f(y)dy =  \int_{\Gamma_1}(\mathfrak d_y \cdot \wt K(x,y)) f(y)dy
\\
& = \int_{\Gamma_1}\wt K(x,y) ((\mathfrak d_y \cdot  f(y) ) dy = T(\mathfrak d\cdot f).
\end{align*}
It follows that $\mathfrak d$ and $T$ commute.
\end{proof}
\subsection{The rank $1$ case}
\begin{lem}\label{exp lemma}
The exponential bispectral function $\psi_{\exp}(x,y) = e^{xy}$ satisfies
$$\bisls^{2\ell,2m}(\psi_{\exp}) = \left\lbrace\sum_{j=0}^\ell \partial_x^ja_j(x^2)\partial_x^j: \deg a_j(x^2) \leq 2m\right\rbrace.$$
In particular,
$$\dim\bisls^{2\ell,2m}(\psi_{\exp}) = \ell m + \ell + m + 1.$$
\end{lem}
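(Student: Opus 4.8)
The plan is to make $\bisls^{2\ell,2m}(\psi_{\exp})$ completely explicit using the concrete description of the left Fourier algebra and the Fourier map of $\psi_{\exp}$ from Example \ref{Fourier-ex}(1), the normal form for formally symmetric operators from Lemma \ref{symmetric form}, and then to read off the dimension. Recall from Example \ref{Fourier-ex}(1) that $\bisl(\psi_{\exp}) = \mathfrak D(\bbc[x])$ and that $b_{\psi_{\exp}}$ is the Fourier transform, i.e.\ the algebra anti-automorphism $\mathfrak D(\bbc[x])\to\mathfrak D(\bbc[y])$ with $\partial_x\mapsto y$ and $x\mapsto\partial_y$. First I would take $\mathfrak d\in\bisls^{2\ell,2m}(\psi_{\exp})$; since $\mathfrak d$ is formally symmetric, Lemma \ref{symmetric form} writes $\mathfrak d=\sum_j\partial_x^j a_j(x)\partial_x^j$, and following the induction in the proof of that lemma shows each $a_j$ is in fact a polynomial because $\mathfrak d$ has polynomial coefficients, while the bound $\ord\mathfrak d\le 2\ell$ forces $a_j=0$ for $j>\ell$. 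Conversely every such sum lies in $\mathfrak D(\bbc[x])$ and is formally symmetric of order at most $2\ell$, so the only conditions still to impose come from $b_{\psi_{\exp}}(\mathfrak d)$.

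Next I would push $\mathfrak d$ through the Fourier map. Since $b_{\psi_{\exp}}$ is an anti-automorphism, $b_{\psi_{\exp}}(\partial_x^j a_j(x)\partial_x^j)=y^j a_j(\partial_y)y^j$, hence $b_{\psi_{\exp}}(\mathfrak d)=\sum_{j=0}^\ell y^j a_j(\partial_y)y^j$. Conjugation by $y^j$ preserves orders and, for distinct $j$, produces distinct powers of $y$ in the leading terms, so no cancellation occurs; thus $\ord b_{\psi_{\exp}}(\mathfrak d)=\max_j\deg a_j$, and the condition $\cord\mathfrak d\le 2m$ becomes $\deg a_j\le 2m$ for all $j$. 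For formal symmetry of $b_{\psi_{\exp}}(\mathfrak d)$ I would use $(y^j)^*=y^j$, $a_j(\partial_y)^*=a_j(-\partial_y)$ and the fact that $*$ reverses products to get $b_{\psi_{\exp}}(\mathfrak d)^*=\sum_j y^j a_j(-\partial_y)y^j$; by the linear independence of the operators $\partial_x^j a_j(x)\partial_x^j$ across $j$ and across the coefficients of $a_j$ (which is exactly the content of the proof of Lemma \ref{symmetric form}) together with bijectivity of $b_{\psi_{\exp}}$, this coincides with $b_{\psi_{\exp}}(\mathfrak d)$ precisely when $a_j(-t)=a_j(t)$ for every $j$, i.e.\ $a_j\in\bbc[x^2]$. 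One could instead cite Corollary \ref{automatic symmetry} for the ``only if'' direction, since under $x\mapsto -x$ one has $\partial_x^j a_j(x)\partial_x^j\mapsto\partial_x^j a_j(-x)\partial_x^j$, but the reverse inclusion still needs the computation above. Combining the constraints gives exactly $\bisls^{2\ell,2m}(\psi_{\exp})=\{\sum_{j=0}^\ell\partial_x^j a_j(x^2)\partial_x^j:\deg a_j(x^2)\le 2m\}$.

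For the dimension, $j$ ranges over the $\ell+1$ values $0,\dots,\ell$, each $a_j(x^2)$ ranges over the $(m+1)$-dimensional space $\vspan\{1,x^2,\dots,x^{2m}\}$, and the parametrization $(a_0,\dots,a_\ell)\mapsto\sum_j\partial_x^j a_j(x^2)\partial_x^j$ is injective, so $\dim\bisls^{2\ell,2m}(\psi_{\exp})=(\ell+1)(m+1)=\ell m+\ell+m+1$. The whole argument is essentially routine; the only points that need a little attention are verifying that the coefficients $a_j$ coming out of Lemma \ref{symmetric form} are polynomial and not merely rational, and the two linear-independence statements used to convert conditions on $b_{\psi_{\exp}}(\mathfrak d)$ into conditions on the $a_j$ and to carry out the final count.
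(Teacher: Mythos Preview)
Your proof is correct and follows essentially the same route as the paper: write a formally symmetric $\mathfrak d\in\mathfrak D(\bbc[x])$ in the normal form of Lemma~\ref{symmetric form}, push it through the Fourier map, and read off the bisymmetry and co-order constraints. The only cosmetic difference is that the paper packages the bisymmetry condition via the automorphism $\iota_{\psi_{\exp},x}=\sigma$ (sending $x\mapsto-x$, $\partial_x\mapsto-\partial_x$) to conclude $a_j\in\bbc[x^2]$, whereas you compute $b_{\psi_{\exp}}(\mathfrak d)^*=\sum_j y^j a_j(-\partial_y)y^j$ directly and compare; these are the same computation viewed from two sides.
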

\begin{proof} From Example \ref{Fourier-ex}(1) we have that 
$\bisl(\psi_{\exp}) = \mathfrak D(\bbc[x])$ and $\bisr(\psi_{\exp}) = \mathfrak D(\bbc[y])$, 
so that both $\bisl(\psi_{\exp})$ and $\bisr(\psi_{\exp})$ are closed under $*$.  
Recall that $\bisl(\psi_{\exp})$ is closed under the adjoint $*$ so that the automorphism $\iota = \iota_{\psi_{\exp},x}$ defined by \eqref{iota} is well-defined.
One may readily check that the automorphism $\iota$ agrees with the endomorphism $\sigma$ of the Weyl algebra induced by the affine transformation $x\mapsto -x$, ie.
$$\sigma: \sum_{j=0}^\ell a_j(x)\partial_x^j \mapsto \sum_{j=0}^\ell a_j(-x)(-1)^j\partial_x^j.$$
Therefore the formally bisymmetric operators in $\bisls(\psi_{\exp})$ are exactly those fixed by $\sigma$, which are those operators of the form
$$\mathfrak d = \sum_{j=0}^\ell \partial_x^ja_j(x^2)\partial_x^j$$
for some integer $\ell\geq 0$ and some functions $a_j(x^2)\in\bbc[x]$.
The corresponding element of $\bisrs(\psi_{\exp})$ is
$$b_{\psi_{\exp}}(\mathfrak d) = \sum_{j=0}^\ell y^ja_j(\partial_y^2)y^j.$$
From this the statement of the lemma follows immediately.
\end{proof}
\begin{thm}\label{exp theorem}
Let ${\wt\psi}(x,y)$ be a self-adjoint bispectral meromorphic function of rank $1$, and let $\Gamma_1$ and $\Gamma_2$ be two finite, smooth curves in $\bbc$ whose endpoints are $\pm p_1$ and $\pm p_2$, respectively.
Assume moreover that the coefficients of the operators in $\bisl({\wt\psi})$ and $\bisr({\wt\psi})$ are holomorphic in a neighborhood of $\Gamma_1$ and $\Gamma_2$, respectively, and that ${\wt\psi}(x,y)$ is holomorphic in a neighborhood of $\Gamma_1\times\Gamma_2$ and satisfies \eqref{niceness condition}.
Then there exists a differential operator $\mathfrak d$ of positive order commuting with the integral operator
$$T: f(x)\mapsto \int_{\Gamma_1}\wt K(x,y)f(y)dy \quad \text{with kernel} \quad \wt K(x,y) = \int_{\Gamma_2} {\wt\psi}(x,z){\wt\psi}(y,z) dy.$$
Moreover, the operator $\mathfrak d$ may be taken to be in $\bisls^{2d_1d_2,2d_1d_2}(\wt\psi)$, for $(d_1,d_2)$ the bidegree of the self-adjoint bispectral Darboux 
transformation from $\psi_{\exp}(x,y)=e^{xy}$ to $\wt\psi(x,y)$.
\end{thm}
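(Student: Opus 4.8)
The plan is to produce the commuting operator as a bisymmetric element of $\bisls(\wt\psi)$ and then invoke Theorem \ref{commuting bisymmetry}; the whole proof is an assembly of the machinery above, the one new ingredient being the exact dimension count for $\psi_{\exp}$. First I would realize $\wt\psi$ as a \emph{self-adjoint} bispectral Darboux transformation of $\psi_{\exp}$: since $\wt\psi$ has rank $1$, Wilson's Theorem \ref{rank1} makes it a bispectral Darboux transformation of $\psi_{\exp}$, and since $\wt\psi$ is self-adjoint as a bispectral function, the corollary following Theorem \ref{self-adj} upgrades this to a self-adjoint bispectral Darboux transformation (its hypotheses are automatic here, as $\bisl(\psi_{\exp}) = \mathfrak D(\bbc[x])$ is closed under $*$ and $\bislf(\psi_{\exp})$ is maximally commutative). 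Write $(d_1,d_2)$ for its bidegree; we may assume $d_1d_2 \geq 1$, since otherwise $\wt\psi$ is a scalar multiple of $\psi_{\exp}$ and the statement is the classical prolate-spheroidal fact for $\psi_{\exp}$. Corollary \ref{automatic symmetry} now gives that every operator of $\bisls(\wt\psi)$ and $\bisrs(\wt\psi)$ is invariant under $x\mapsto-x$ and $y\mapsto-y$; together with the hypothesis that $\Gamma_1,\Gamma_2$ are finite with endpoints $\pm p_1$, $\pm p_2$ and that the coefficients of operators in $\bisl(\wt\psi)$, $\bisr(\wt\psi)$ are holomorphic near $\Gamma_1$, $\Gamma_2$, this verifies all the hypotheses of Proposition \ref{fundamental dimension estimate}, in case (i) for both curves.

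Next I would run the dimension estimate. By Lemma \ref{exp lemma}, $\dim\bisls^{2\ell,2m}(\psi_{\exp}) = \ell m+\ell+m+1$, so Corollary \ref{dimension cor} applies with $r=2$ and $a_{11}=a_{10}=a_{01}=a_{00}=1$; a short computation of the floor terms gives $\wt c = d_1d_2-1$, and therefore
$$\dim\bisls^{2\ell,2m}(\wt\psi)\ \geq\ \ell m+\ell+m+2-d_1d_2\qquad\text{for all }\ell,m\ge 0.$$
Taking $\ell=m=d_1d_2$ yields $\dim\bisls^{2d_1d_2,2d_1d_2}(\wt\psi)\geq (d_1d_2)^2+d_1d_2+2$, which strictly exceeds $\tfrac{\ell(\ell+1)}{2}+\tfrac{m(m+1)}{2}+1=(d_1d_2)^2+d_1d_2+1$. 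Hence Proposition \ref{fundamental dimension estimate} applies at $(\ell,m)=(d_1d_2,d_1d_2)$ and produces an operator $\mathfrak d$ of positive order, bisymmetric with respect to $(\Gamma_1,\Gamma_2)$ and, by the proof of that proposition, lying in $\bisls^{2d_1d_2,2d_1d_2}(\wt\psi)$; its coefficients are holomorphic near $\Gamma_1$ by hypothesis. Theorem \ref{commuting bisymmetry}, using that $\wt\psi$ is holomorphic near $\Gamma_1\times\Gamma_2$ and satisfies \eqref{niceness condition}, then shows that $\mathfrak d$ commutes with $T$, which is exactly the claim together with the asserted membership.

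I expect the only real friction to be in the first step: confirming that the bispectral Darboux transformation can be normalized so that the strong self-adjointness of Definition \ref{selfadj} holds, not merely self-adjointness of $\wt\psi$ as a function — this goes through the leading-term normalization in the remark following Definition \ref{selfadj} and Remark (ii) after Theorem \ref{self-adj}. The arithmetic of the second step is elementary, but it is worth noting that the estimate is tight: the final inequality $(d_1d_2)^2+d_1d_2+2 > (d_1d_2)^2+d_1d_2+1$ holds by a single dimension, so there is no room to weaken the constant in Corollary \ref{dimension cor} or the count in Lemma \ref{exp lemma}.
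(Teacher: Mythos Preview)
Your proposal is correct and follows essentially the same approach as the paper's own proof: combine the exact count of Lemma \ref{exp lemma} with Corollary \ref{dimension cor} to get $\dim\bisls^{2\ell,2m}(\wt\psi)\geq \ell m+\ell+m+2-d_1d_2$, set $\ell=m=d_1d_2$, invoke Corollary \ref{automatic symmetry} and Proposition \ref{fundamental dimension estimate} to extract a bisymmetric operator, and finish with Theorem \ref{commuting bisymmetry}. You are simply more explicit than the paper in justifying that a rank~$1$ self-adjoint bispectral function is in fact a \emph{self-adjoint} bispectral Darboux transformation of $\psi_{\exp}$ (via Theorem \ref{rank1} and the corollary to Theorem \ref{self-adj}), in computing $\wt c=d_1d_2-1$, and in noting that the final inequality is sharp by one dimension.
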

\begin{remk}
The rank one bispectral functions ${\wt\psi}(x,y)$ are all of the form $e^{xy}\frac{h(x,y)}{p(x)q(y)}$ for some polynomials $p(x),q(y),h(x,y)$.  The poles of the coefficients of operators in $\bisl({\wt\psi})$ and $\bisr({\wt\psi})$ occur at the zeros of $p(x)$ and $q(y)$, respectively.
In particular, as long as $\Gamma_1$ avoids the zeros of $p(x)$ and $\Gamma_2$ avoids the zeros of $q(y)$, and the endpoints of $\Gamma_1$ and $\Gamma_2$ satisfy the desired symmetry, the assumptions of the above theorem will be satisfied.
\end{remk}
\begin{proof}
Suppose that ${\wt\psi}(x,y)$ is a self-adjoint bispectral meromorphic function of rank $1$.  Then by Lemma \ref{exp lemma} and Corollary \ref{dimension cor}, we have
$$\dim\bisls^{2\ell,2m}({\wt\psi})\geq \ell m + \ell + m + 2 - d_1d_2.$$
Thus for $\ell=m$, we have
$$\dim\bisls^{2m,2m}({\wt\psi})\geq m^2 + 2m + 2 - d_1d_2 > m^2 + m + 1,$$
for $m\geq d_1d_2$.
Proposition \ref{fundamental dimension estimate} combined with Corollary \ref{automatic symmetry} implies that $\bisls^{2d_1d_2,2d_1d_2}({\wt\psi})$ contains a differential operator bisymmetric 
with respect to $(\Gamma_1,\Gamma_2)$. This differential operator commutes with the integral operator $T$ by Theorem \ref{commuting bisymmetry}.
\end{proof}

\subsection{The rank $2$ Airy case}
We next deal with the bispectral Darboux transformations in the rank $2$ Airy case.
The relevant integral operator differs in this case from the integral operators in the rank $1$ case and in the rank $2$ Bessel case (discussed below) 
in that the kernel is not compactly supported. For the resultant kernel to satisfy \eqref{niceness condition}, the support must be contained in a certain subdomain of the complex plane.
For this reason, for all $\epsilon>0$ we consider the domain
$$\Sigma_\epsilon = \{re^{i\theta}\in\bbc: r> 0,\ |\theta| < \pi/6-\epsilon\}.$$
The Airy function is holomorphic on this domain and has the asymptotic expansion
$$\psi_{\Ai}(x+y) = e^{-2(x+y)^{3/2}/3}\left(\sum_{j=1}^\infty c_j (x+y)^{-j/4}\right),$$
for some real constants $c_j\in\bbr$ where $(x+y)^{1/4}$ is interpreted as the principal $4$th root of $(x+y)$.
Furthermore, any bispectral Darboux transformation of $\psi_{\Ai}(x+y)$ will be equal to $\wt\psi(x,y) = \frac{1}{p(x)q(y)}\mathfrak u\cdot\psi_{\Ai}(x+y)$ for some rational functions $p(x),q(y)$ and some differential operator $\mathfrak u$ with rational coefficients.
Thus for any bispectral Darboux transformation of $\psi_{\Ai}(x+y)$ we have the asyptotic estimate
$$\|\partial_x^j\partial_y^k\cdot\psi_{\Ai}(x+y)\| = e^{-2(x+y)^{3/2}/3}\mathcal O((|x|+|y|)^{(j+k)/2+m})$$
for some integer $m$.

Note that $z\mapsto 2z^{3/2}/3$ sends $\Gamma$ to $\{re^{i\theta}\in\bbc: r>0,\ |\theta| < \pi/4-3\epsilon/2\}$.
Therefore if $\Gamma_1,\Gamma_2\subseteq\Sigma_\epsilon$ are smooth, semi-infinite curves inside this domain with parametrizations $\gamma_i(t): [0,\infty)\rightarrow\bbc$ then the real part of $-2(\gamma_1(t)+\gamma_2(s))^{3/2}/3$ must go to $-\infty$ as $t\rightarrow\infty$ or $s\rightarrow\infty$.
Therefore the above asymptotic estimate shows that $\wt\psi(x,y)$ will satisfy \eqref{niceness condition} for any pair of curves $\Gamma_1,\Gamma_2\subseteq\Sigma_\epsilon$.
\begin{lem}\label{airy lemma}
The Airy bispectral function $\psi_{\Ai}(x,y) = \Ai(x+y)$ satisfies
$$\dim\bisls^{2\ell,2m}(\psi_{\Ai}) = \ell m + \ell + m + 1.$$
\end{lem}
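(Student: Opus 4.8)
The plan is to realize $\bisls^{2\ell,2m}(\psi_{\Ai})$ as the fixed subspace of the formal adjoint $*$ acting on the finite-dimensional space $W:=\bisl^{2\ell,2m}(\psi_{\Ai})$, and then to read off its dimension from a trace computation using the explicit basis of $W$ recorded in Example \ref{Filtr-ex}(3).

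First I would set up the involution. By Example \ref{Fourier-ex}(3), $\bisl(\psi_{\Ai})=\mathfrak D(\bbc[x])$ and $\bisr(\psi_{\Ai})=\mathfrak D(\bbc[y])$, with $b_{\psi_{\Ai}}$ the anti-isomorphism sending $\partial_x\mapsto\partial_y$ and $x\mapsto\mathfrak d_{\Ai,y}$. Both $\mathfrak d\mapsto b_{\psi_{\Ai}}(\mathfrak d^{*})$ and $\mathfrak d\mapsto b_{\psi_{\Ai}}(\mathfrak d)^{*}$ are algebra homomorphisms $\mathfrak D(\bbc[x])\to\mathfrak D(\bbc[y])$, and they agree on the algebra generators $\partial_x$ and $x$ (using $\partial_x^{*}=-\partial_x$, $x^{*}=x$ and $\mathfrak d_{\Ai,y}^{*}=\mathfrak d_{\Ai,y}$), hence $b_{\psi_{\Ai}}(\mathfrak d^{*})=b_{\psi_{\Ai}}(\mathfrak d)^{*}$ for all $\mathfrak d$; equivalently $\iota_{\psi_{\Ai},x}=\mathrm{id}$. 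In particular, if $\mathfrak d$ is formally symmetric then so is $b_{\psi_{\Ai}}(\mathfrak d)$, so that $\bisls^{2\ell,2m}(\psi_{\Ai})=\{\mathfrak d\in W:\mathfrak d^{*}=\mathfrak d\}$. Since $*$ preserves the order and (by the identity just noted) the co-order of an operator, it restricts to an involution of $W$, and therefore
$$
\dim\bisls^{2\ell,2m}(\psi_{\Ai})=\tfrac12\bigl(\dim W+\tr(*|_{W})\bigr).
$$

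Next I would evaluate the two terms on the right using Example \ref{Filtr-ex}(3), which gives $W=A\oplus B$ with $A=\vspan\{x^{k}\mathfrak d_{\Ai,x}^{j}:0\le j\le\ell,\ 0\le k\le m\}$ and $B=\vspan\{x^{k}\partial_x\mathfrak d_{\Ai,x}^{j}:0\le j<\ell,\ 0\le k<m\}$, so that $\dim W=(\ell+1)(m+1)+\ell m=2\ell m+\ell+m+1$. To compute the trace I would filter $W$ by the exponent of $x$ appearing in these basis monomials and check that $*$ respects this filtration and acts as $+1$ (resp. $-1$) on the image of each $A$-monomial (resp. $B$-monomial) in the associated graded. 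Concretely, using $\mathfrak d_{\Ai,x}^{*}=\mathfrak d_{\Ai,x}$, $\partial_x^{*}=-\partial_x$, $[\mathfrak d_{\Ai,x},x]=2\partial_x$ and $\partial_x^{2}=\mathfrak d_{\Ai,x}+x$, a short induction on $j$ gives $\mathfrak d_{\Ai,x}^{j}x^{k}=x^{k}\mathfrak d_{\Ai,x}^{j}+(\text{terms of strictly smaller }x\text{-exponent})$, from which
$$
(x^{k}\mathfrak d_{\Ai,x}^{j})^{*}=x^{k}\mathfrak d_{\Ai,x}^{j}+\bigl(\text{smaller }x\text{-exponent}\bigr),\qquad (x^{k}\partial_x\mathfrak d_{\Ai,x}^{j})^{*}=-x^{k}\partial_x\mathfrak d_{\Ai,x}^{j}+\bigl(\text{smaller }x\text{-exponent}\bigr).
$$
The off-diagonal contributions do not affect the diagonal entries, so $\tr(*|_{W})=\dim A-\dim B=(\ell+1)(m+1)-\ell m=\ell+m+1$. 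Substituting into the formula above yields $\dim\bisls^{2\ell,2m}(\psi_{\Ai})=\tfrac12\bigl((2\ell m+\ell+m+1)+(\ell+m+1)\bigr)=\ell m+\ell+m+1$.

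The one place that needs care is the filtration claim $\mathfrak d_{\Ai,x}^{j}x^{k}=x^{k}\mathfrak d_{\Ai,x}^{j}+(\text{lower }x\text{-exponent})$: when the commutators arising in the induction are re-expressed in the basis $\{x^{k}\mathfrak d_{\Ai,x}^{j}\}\cup\{x^{k}\partial_x\mathfrak d_{\Ai,x}^{j}\}$ one must apply $\partial_x^{2}=\mathfrak d_{\Ai,x}+x$, which could a priori raise the power of $x$; one has to verify that all the resulting terms still carry strictly smaller powers of $x$ and stay inside $W$ (the latter being automatic once one knows $*$ preserves $W$). Everything else is routine, and the same trace bookkeeping should reappear in the analogous computation for the Bessel case.
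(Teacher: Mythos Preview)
Your approach is correct and genuinely different from the paper's. Both proofs begin by observing that $b_{\psi_{\Ai}}$ commutes with $*$, so that formal bisymmetry reduces to ordinary formal symmetry. From there the paper exhibits an explicit basis $\mathfrak a_{jk}=\mathfrak d_{\Ai,x}^{j}x^{k}+x^{k}\mathfrak d_{\Ai,x}^{j}$ of $\bisls^{2\ell,2m}(\psi_{\Ai})$ and argues that it spans by repeatedly subtracting the $\mathfrak a_{jk}$ matching the leading term (any formally symmetric operator has even order and polynomial leading coefficient). Your route instead realizes $\bisls^{2\ell,2m}(\psi_{\Ai})$ as the $+1$--eigenspace of the involution $*$ on $W=\bisl^{2\ell,2m}(\psi_{\Ai})$ and computes its dimension via $\tfrac12(\dim W+\tr(*|_W))$. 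The paper's method gives a concrete basis (used later in the algorithm of \S\ref{6.1}); your method is more structural and would transfer verbatim to any situation where one knows a basis of the full bifiltered piece.

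Two remarks on execution. First, your displayed formula for the $B$-monomials is not quite right: a direct computation gives
$(x^{k}\partial_x\mathfrak d_{\Ai,x}^{j})^{*}=-x^{k}\partial_x\mathfrak d_{\Ai,x}^{j}-j\,x^{k}\mathfrak d_{\Ai,x}^{j-1}+(\text{lower terms})$,
so there is an $A$-type term at the \emph{same} $k$-level. This is harmless for the trace (the matrix on each graded piece is block upper-triangular with diagonal blocks $I$ and $-I$), and your sentence ``off-diagonal contributions do not affect the diagonal entries'' is exactly the right salvage. Second, the delicate point you flag about $\partial_x^{2}=\mathfrak d_{\Ai,x}+x$ potentially raising the $x$-exponent disappears entirely if you filter by \emph{differential order} rather than by $k$: since $*$ acts as $(-1)^{n}$ on order-$n$ symbols, one has immediately $\tr(*|_W)=\sum_{n}(-1)^{n}\dim(W_{\leq n}/W_{\leq n-1})=(\ell+1)(m+1)-\ell m=\ell+m+1$, with no commutator bookkeeping required.
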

\begin{proof} Recall from Example \ref{Fourier-ex}(3) that 
$$
\bisl(\psi_{\Ai}) = \mathfrak D(\bbc[x]) \ \ \text{and} \ \
\bisr(\psi_{\Ai}) = \mathfrak D(\bbc[y]).
$$
In particular, $\bisl(\psi_{\Ai})$ is closed under $*$. From the description of $b_{\psi_{\Ai}}$ in Example \ref{Fourier-ex}(3) one easily sees that 
$b_{\psi_{\Ai}}$ and $*$ commute. Therefore $\mathfrak d \in \bisl(\psi_{\Ai})$ is formally bisymmetric if and only if $\mathfrak{d}^* = \mathfrak{d}$.

Recall that $\bislf(\psi_{\Ai})$ contains all polynomials in $x$ as well as the Airy operator $\mathfrak d_{\Ai,x} = \partial_x^2-x$.
Moreover for $\mathfrak d\in \bisl^{\ell,m}(\psi_{\Ai})$ the Ad-condition $\ad_{\mathfrak d_{\Ai_x}}^{m+1}(\mathfrak d) = 0$ implies that the leading coefficient of $\mathfrak d$ must be a polynomial.
For each $j,k\geq 0$ consider the formally symmetric differential operator in $\bisl(\psi_{\Ai})$ defined by
$$\mathfrak a_{jkx} = \mathfrak d_{\Ai,x}^jx^k + x^k\mathfrak d_{\Ai,x}^j.$$
which in particular has order $2j$ and leading coefficient $2x^k$.
The anti-isomorphism $b_{\psi_{\Ai}}$ sends $\mathfrak a_{jkx}$ to $\mathfrak a_{jky}$, so $\mathfrak a_{jkx}\in\bisls(\psi_{\Ai})$.

If $\mathfrak d$ is self-adjoint, it must have even order with polynomial leading coefficient.
Thus by comparing leading coefficients, we see that $\bisls(\psi_{\Ai})$ has basis $\{\mathfrak a_{jkx}: j,k\geq 0\}$.
Therefore comparing orders we see that $\bisls^{2\ell,2m}(\psi_{\Ai})$ has basis
$$\{\mathfrak a_{jix}: 0\leq j\leq l,\ 0\leq k\leq m\}.$$
In particular it has dimension $(\ell+1)(m+1)$.
\end{proof}

\begin{thm}\label{airy theorem}
Let $\wt\psi(x,y)$ be a self-adjoint bispectral Darboux transformation of the Airy bipectral function $\psi_{\Ai}(x,y)= \Ai(x+y)$, and let $\Gamma_1$ 
and $\Gamma_2$ be two semi-infinite, smooth curves in $\Sigma_\epsilon$ for some $\epsilon > 0$ whose finite endpoints are $p_1$ and $p_2$, respectively.
Assume moreover that $\wt \psi(x,y)$ is holomorphic in a neighborhood of $\Gamma_1\times\Gamma_2$ and satisfies \eqref{niceness condition} and that the operators $\bisl(\wt\psi)$ and $\bisr(\wt\psi)$ have holomorphic coefficients in a neighborhood of $\Gamma_1$ and $\Gamma_2$, respectively.
Then there exists a differential operator $\mathfrak d$ of positive order commuting with the integral operator
$$T: f(x)\mapsto \int_{\Gamma_1}\wt K(x,y)f(y)dy \quad \text{with kernel} \quad \wt K(x,y) = \int_{\Gamma_2} \wt\psi(x,z)\wt\psi(y,z) dy.$$
Moreover, the operator $\mathfrak d$ may be taken to be in $\bisls^{2d_1d_2,2d_1d_2}(\wt\psi)$, for $(d_1,d_2)$ the bidegree the self-adjoint bispectral Darboux 
transformation from $\psi_{\Ai}(x,y)$ to $\wt\psi(x,y)$.
\end{thm}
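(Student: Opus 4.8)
The plan is to follow the proof of Theorem \ref{exp theorem} almost verbatim, feeding in the Airy data instead of the exponential data and exploiting the semi-infinite geometry of the contours rather than a reflection symmetry. First I would invoke Corollary \ref{dimension cor} with input Lemma \ref{airy lemma}. Since $\dim\bisls^{2\ell,2m}(\psi_{\Ai}) = \ell m + \ell + m + 1$, hypothesis \eqref{ineq1} holds with $r=2$ and $a_{11}=a_{10}=a_{01}=a_{00}=1$; a short computation gives $\wt c = d_1d_2 - 1$, where $(d_1,d_2)$ is the bidegree of the self-adjoint bispectral Darboux transformation from $\psi_{\Ai}(x,y)$ to $\wt\psi(x,y)$, so the corollary yields
\[
\dim\bisls^{2\ell,2m}(\wt\psi) \geq \ell m + \ell + m + 2 - d_1d_2
\]
for all $\ell,m\geq 0$.

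Next I would specialize to $\ell=m=d_1d_2$. Then $\dim\bisls^{2d_1d_2,2d_1d_2}(\wt\psi) \geq (d_1d_2)^2 + 2d_1d_2 + 2 - d_1d_2 > (d_1d_2)^2 + d_1d_2 + 1$, and since $(d_1d_2)^2 + d_1d_2 = d_1d_2(d_1d_2+1)/2 + d_1d_2(d_1d_2+1)/2$, this is exactly the strict inequality required to apply Proposition \ref{fundamental dimension estimate} with $\ell=m=d_1d_2$. To check the remaining hypotheses of that proposition: the coefficients of the bispectral operators of $\wt\psi$ are holomorphic near $\Gamma_1$ and $\Gamma_2$ by assumption; and, crucially, since $\Gamma_1,\Gamma_2$ are \emph{semi-infinite} curves inside $\Sigma_\epsilon$, each of them has an endpoint at $\infty$, so alternative (ii) of the proposition holds for $i=1,2$ (here, unlike the rank $1$ case, one does not use Corollary \ref{automatic symmetry}, because $\iota_{\psi_{\Ai},x},\iota_{\psi_{\Ai},y}$ — hence by Proposition \ref{iota prop} also $\iota_{\wt\psi,x},\iota_{\wt\psi,y}$ — are the identity). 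Proposition \ref{fundamental dimension estimate} then produces an operator $\mathfrak d\in\bisls^{2d_1d_2,2d_1d_2}(\wt\psi)$ of positive order that is bisymmetric with respect to $(\Gamma_1,\Gamma_2)$. Finally, because $\wt\psi$ is holomorphic near $\Gamma_1\times\Gamma_2$ and satisfies \eqref{niceness condition} — which, as explained in the discussion preceding Lemma \ref{airy lemma}, is automatic for any curves in $\Sigma_\epsilon$ thanks to the Gaussian-type decay of $\psi_{\Ai}$ — Theorem \ref{commuting bisymmetry} applies to $\mathfrak d$ and shows that $\mathfrak d$ commutes with $T$.

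The only genuinely new point relative to Theorem \ref{exp theorem} is that the symmetry hypothesis in Proposition \ref{fundamental dimension estimate} is now supplied by the presence of an endpoint at $\infty$ rather than by invariance under $x\mapsto -x$, $y\mapsto -y$; I expect the main thing to get right is precisely the bookkeeping that places the contours inside $\Sigma_\epsilon$ so that condition (ii) applies and \eqref{niceness condition} holds, which is exactly the content of the paragraphs before Lemma \ref{airy lemma}. The rest is a routine transcription of the rank $1$ argument, with Lemma \ref{exp lemma} and Corollary \ref{automatic symmetry} replaced by Lemma \ref{airy lemma} and the $\infty$-endpoint alternative.
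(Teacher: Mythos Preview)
Your proposal is correct and follows essentially the same argument as the paper's proof: combine Lemma \ref{airy lemma} with Corollary \ref{dimension cor} to get the dimension bound $\dim\bisls^{2\ell,2m}(\wt\psi)\geq \ell m+\ell+m+2-d_1d_2$, specialize to $\ell=m=d_1d_2$ to trigger Proposition \ref{fundamental dimension estimate}, and then invoke Theorem \ref{commuting bisymmetry}. Your explicit identification of alternative (ii) in Proposition \ref{fundamental dimension estimate} (the $\infty$-endpoint) as the relevant hypothesis, in place of Corollary \ref{automatic symmetry}, is exactly the point the paper leaves implicit.
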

\begin{remk}
The bispectral Darboux transformations of the Airy bispectral function will be holomorphic away from the roots of a polynomial $p(x)$ and a polynomial $q(y)$, which are 
the polynomials entering in the definition of the concrete bispectral Darboux transformation as in \eqref{dual-poly}. 
Furthermore, the poles of the coefficients of the operators in $\bisl(\wt\psi)$ and $\bisr(\wt\psi)$ occur at the roots of these polynomials also.
Therefore the assumptions of the theorem will be automatically satisfied as long as $\Gamma_1$ and $\Gamma_2$ are both semi-infinite paths in $\Sigma_\epsilon$ which avoid the zero sets of $p(x)$ and $q(y)$.
\end{remk}
\begin{proof}
By Lemma \ref{airy lemma} and Corollary \ref{dimension cor}, we have
$$\dim\bisls^{2\ell,2m}(\wt\psi)\geq \ell m + \ell + m + 2 - d_1d_2$$
for some constant $c$.  Thus for $\ell=m$, we have
$$\dim\bisls^{2m,2m}(\wt\psi)\geq m^2 + 2m + 2 - d_1d_2 > m^2 + m + 1$$
for $m\geq d_1d_2$. By Proposition \ref{fundamental dimension estimate}, $\bisls^{2d_1d_2,2d_1d_2}(\wt\psi)$ contains a differential operator 
which is bisymmetric with respect to $(\Gamma_1,\Gamma_2)$. It follows from Theorem \ref{commuting bisymmetry} that
this differential operator commutes with the integral operator $T$. 
\end{proof}
\subsection{The rank $2$ Bessel case}
\begin{lem}\label{bessel lemma}
Let $\nu\in\bbr\diff\bbz$.  The Bessel bispectral function $\psi_{\Be(\nu)}(x,y) = \sqrt{xy}K_{\nu+1/2}(xy)$ satisfies
$$\dim\bisls^{2\ell,2m}(\psi_{\Be(\nu)}) = \ell m + \ell + m + 1.$$
\end{lem}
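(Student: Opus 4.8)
The plan is to reproduce the argument of Lemma~\ref{airy lemma}, exhibiting an explicit basis of $\bisls^{2\ell,2m}(\psi_{\Be(\nu)})$ built from the generators of the Fourier algebra recorded in Example~\ref{Fourier-ex}(2). Since $\nu\in\bbr\diff\bbz\subseteq\bbc\diff\bbz$, that example gives $\bisl(\psi_{\Be(\nu)}) = \langle\mathfrak d_{\Be(\nu),x},\,x\partial_x,\,x^2\rangle$ and the anti-isomorphism $b_{\psi_{\Be(\nu)}}$ acting by $\mathfrak d_{\Be(\nu),x}\mapsto y^2$, $x\partial_x\mapsto y\partial_y$, $x^2\mapsto\mathfrak d_{\Be(\nu),y}$; the associated double filtration is the one made explicit in Example~\ref{Filtr-ex}(2). (Both $\bisl(\psi_{\Be(\nu)})$ and $\bisr(\psi_{\Be(\nu)})$ are closed under the formal adjoint $*$ and lie inside the subalgebras of operators fixed by $x\mapsto-x$, respectively $y\mapsto-y$, since this holds for their generators.)

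Next I would set, for integers $j,k\geq 0$,
$$\mathfrak a_{jkx} \;=\; \mathfrak d_{\Be(\nu),x}^{j}\,x^{2k} \;+\; x^{2k}\,\mathfrak d_{\Be(\nu),x}^{j} \;\in\; \bisl(\psi_{\Be(\nu)}).$$
Since $\mathfrak d_{\Be(\nu),x}^{j}$ and $x^{2k}$ are formally symmetric and $(AB+BA)^* = BA+AB$ for formally symmetric $A,B$, the operator $\mathfrak a_{jkx}$ is formally symmetric, and $b_{\psi_{\Be(\nu)}}(\mathfrak a_{jkx}) = \mathfrak d_{\Be(\nu),y}^{k}\,y^{2j} + y^{2j}\,\mathfrak d_{\Be(\nu),y}^{k}$ is formally symmetric too; hence $\mathfrak a_{jkx}\in\bisls(\psi_{\Be(\nu)})$. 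Reading off leading terms, $\mathfrak a_{jkx}$ has order exactly $2j$ with leading coefficient $2x^{2k}$ and co-order exactly $2k$, so $\mathfrak a_{jkx}\in\bisls^{2j,2k}(\psi_{\Be(\nu)})$; in particular the $(\ell+1)(m+1)$ operators $\mathfrak a_{jkx}$ with $0\leq j\leq\ell$ and $0\leq k\leq m$ all lie in $\bisls^{2\ell,2m}(\psi_{\Be(\nu)})$.

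The heart of the proof is that these operators form a basis. Linear independence is clear: for each fixed order $2j$ the operators $\mathfrak a_{jkx}$ have leading coefficients $2x^{2k}$ of pairwise distinct degrees, so the whole family is independent modulo lower order. For spanning, let $\mathfrak d\in\bisls^{2\ell,2m}(\psi_{\Be(\nu)})$; being formally symmetric it has even order. Expanding $\mathfrak d$ in the basis of Example~\ref{Filtr-ex}(2), the terms of the shape $x^{2k}(x\partial_x)\mathfrak d_{\Be(\nu),x}^{j}$ have odd order $2j+1$, so a parity count on the largest such order forces their coefficients to vanish, leaving $\mathfrak d = \sum_{0\leq j\leq\ell,\ 0\leq k\leq m}\alpha_{jk}\,x^{2k}\mathfrak d_{\Be(\nu),x}^{j}$ for scalars $\alpha_{jk}$. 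Because $[x^{2k},\mathfrak d_{\Be(\nu),x}^{j}]$ is formally anti-symmetric, the formally symmetric part of the right-hand side equals $\tfrac12\sum\alpha_{jk}\mathfrak a_{jkx}$; since $\mathfrak d$ equals its own symmetric part, $\mathfrak d = \tfrac12\sum\alpha_{jk}\mathfrak a_{jkx}$. Hence $\dim\bisls^{2\ell,2m}(\psi_{\Be(\nu)}) = (\ell+1)(m+1) = \ell m + \ell + m + 1$.

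The step I expect to cause the most trouble is this spanning argument, specifically showing that the extra $x\partial_x$-generators appearing in $\bisl^{2\ell,2m}(\psi_{\Be(\nu)})$ produce no new bisymmetric operators after symmetrization. This is handled by the order-parity observation (formal symmetry forces even order, while a nonzero contribution from the top power of $x\partial_x$ would force odd order) together with the symmetric/anti-symmetric splitting of $x^{2k}\mathfrak d_{\Be(\nu),x}^{j}$; with those two facts in hand the remaining computations --- the orders, co-orders and leading coefficients of the $\mathfrak a_{jkx}$ --- are routine, just as in the Airy and exponential cases (Lemmas~\ref{airy lemma} and~\ref{exp lemma}).
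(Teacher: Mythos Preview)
Your choice of basis $\mathfrak a_{jkx}=\mathfrak d_{\Be(\nu),x}^{j}x^{2k}+x^{2k}\mathfrak d_{\Be(\nu),x}^{j}$ is perfectly good---in fact somewhat simpler than the paper's basis $\mathfrak s_x^{k}\mathfrak d_{\Be(\nu),x}^{j}(\mathfrak s_x^*)^{k}$, $\mathfrak s_x^{k}x^{2j}(\mathfrak s_x^*)^{k}$---and your linear-independence argument is fine. The gap is in the spanning step: your ``parity count'' does \emph{not} force the coefficients $\beta_{jk}$ of the $x^{2k}(x\partial_x)\mathfrak d_{\Be(\nu),x}^{j}$ terms to vanish. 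Already for $\ell=m=1$, the formally bisymmetric operator $\tfrac12\mathfrak a_{11x}=x^{2}\mathfrak d_{\Be(\nu),x}+2x\partial_x+1$ has $\beta_{00}=2$ in the expansion of Example~\ref{Filtr-ex}(2); more generally the symmetry condition ties the $\beta$'s to the $\alpha$'s rather than killing them. So the subsequent sentence ``$\mathfrak d=\tfrac12\sum\alpha_{jk}\mathfrak a_{jkx}$'' does not follow from what you wrote.

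There are two easy repairs. The first, closest to the paper, is to argue by leading coefficients: for $\mathfrak d\in\bisls^{2\ell,2m}(\psi_{\Be(\nu)})$ the $\ad$-condition $\ad_{\mathfrak d_{\Be(\nu),x}}^{2m+1}(\mathfrak d)=0$ forces the leading coefficient to be a polynomial of degree $\leq 2m$, formal symmetry forces the order to be even, and $\sigma$-invariance of $\bisl(\psi_{\Be(\nu)})$ forces the leading coefficient to lie in $\bbc[x^{2}]$; subtracting a suitable combination of your $\mathfrak a_{jkx}$ (which have order $2j$, leading coefficient $2x^{2k}$, co-order $2k\leq 2m$) drops the order and one inducts. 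The second repair is pure dimension counting: since $\iota_{\psi_{\Be(\nu)},x}$ is the identity on $\bisl(\psi_{\Be(\nu)})$, the involution $*$ preserves $\bisl^{2\ell,2m}(\psi_{\Be(\nu)})$ and $\bisls^{2\ell,2m}$ is its $+1$-eigenspace; the commutators $[\mathfrak d_{\Be(\nu),x}^{j},x^{2k}]$ for $1\leq j\leq\ell$, $1\leq k\leq m$ give $\ell m$ independent elements of the $-1$-eigenspace, and together with your $(\ell+1)(m+1)$ symmetric elements they exhaust $\dim\bisl^{2\ell,2m}=(\ell+1)(m+1)+\ell m$, so both inequalities are equalities.
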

\begin{proof} It follows from Example \ref{Fourier-ex}(2) that the algebra $\bisl(\psi_{\Be(\nu)})$ is generated 
over $\bbc$ by the Bessel operator $\mathfrak d_{\Be(\nu)x} = \partial_x^2-\frac{\nu(\nu+1)}{x^2}$ the operator $\mathfrak s_x= x \partial_x$ and $x^2$.  
In particular, all of the elements of $\bisl(\psi_{\Be(\nu)})$ are invariant under the change of coordinates $x\mapsto -x$.
Furthermore, for all $j,k\geq 0$,
$$\mathfrak a_{jk} = \mathfrak s_x^k\mathfrak d_{\Be(\nu)x}^j(\mathfrak s_x^*)^k \quad \text{and} \quad \mathfrak b_{jk} = \mathfrak s_x^k x^{2j}(\mathfrak s_x^*)^k$$
are formally symmetric differential operators with $\ord(\mathfrak a_{jk}) = 2j+2k$, $\ord(\mathfrak b_{jk}) = 2k$, $\cord(\mathfrak a_{jk}) = 2k$ and $\cord(\mathfrak b_{jk}) = 2j+2k$.
In particular, $\ord(\mathfrak a_{jk}) \geq \cord(\mathfrak a_{jk})$ and $\ord(\mathfrak b_{jk}) \leq \cord(\mathfrak b_{jk})$ with equality if and only if $j$ is zero.  Thus the set
$$\{\mathfrak a_{jk}: j+k\leq \ell,k\leq m\}\cup\{\mathfrak b_{jk}: j+k\leq m, k\leq \ell, j\neq 0\}$$
is a linearly independent collection of $(\ell+1)(m+1)$ elements of $\bisls^{2\ell,2m}(\psi_{\Be(\nu)})$.

If $\mathfrak d\in\bisls(\psi^{2\ell,2m}_{\Be(\nu)})$ is an arbitrary operator, then the Ad-condition $\ad_{\mathfrak d_{\Be(\nu),x}}^{2m+1}(\mathfrak d) = 0$ implies that the leading coefficient of $\mathfrak d$ must be a polynomial.
Furthermore, since $\mathfrak d$ is formally symmetric it must have even order.
Finally, since $\mathfrak d$ must be invariant under the transformation $x\mapsto-x$, the leading coefficient of $\mathfrak d$ must be a polynomial in $x^2$.
Therefore by comparing leading coefficients, we see that $\mathfrak d$ must lie in the span of the $\mathfrak a_{jk}$ and $\mathfrak b_{jk}$.
Thus the set $\{\mathfrak a_{jk},\mathfrak b_{jk}: j,k\geq 0\}$ forms a basis for $\bisls(\psi_{\Be(\nu)})$ and by noting the orders and co-orders, we see that the linearly independent collection noted in the previous paragraph is actually a basis for $\bisls^{2\ell,2m}(\psi_{\Be(\nu)})$.
\end{proof}

\begin{thm}\label{bessel theorem}
Let $\wt\psi(x,y)$ be a self-adjoint bispectral Darboux transformation of the bispectral Bessel function $\psi_{\Be(\nu)}$, and let $\Gamma_1$ and $\Gamma_2$ be two finite, smooth curves in $\bbc$ whose endpoints are $\pm p_1$ and $\pm p_2$, respectively.
Assume moreover that the coefficients of the operators in $\bisl(\wt\psi)$ and $\bisr(\wt\psi)$ are holomorphic in a neighborhood of $\Gamma_1$ and $\Gamma_2$, respectively, and that $\wt\psi(x,y)$ is holomorphic in a neighborhood of $\Gamma_1\times\Gamma_2$ and satisfies \eqref{niceness condition}.
Then there exists a differential operator $\mathfrak d$ of positive order commuting with the integral operator
$$T: f(x)\mapsto \int_{\Gamma_1}\wt K(x,y)f(y)dy \quad \text{with kernel} \quad \wt K(x,y) = \int_{\Gamma_2} \wt\psi(x,z)\wt\psi(y,z) dy.$$
Moreover, the operator $\mathfrak d$ may be taken to be in $\bisls^{2d_1d_2,2d_1d_2}(\wt\psi)$, for $(d_1,d_2)$ the bidegree the self-adjoint bispectral Darboux 
transformation from $\psi_{\Be(\nu)}$ to $\wt\psi(x,y)$.
\end{thm}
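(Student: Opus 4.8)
The plan is to mirror the arguments used for Theorems \ref{exp theorem} and \ref{airy theorem}, replacing the elementary bispectral function $\psi_{\exp}$ (resp. $\psi_{\Ai}$) by the Bessel function $\psi_{\Be(\nu)}$. First I would invoke Lemma \ref{bessel lemma}, which gives the exact count $\dim\bisls^{2\ell,2m}(\psi_{\Be(\nu)}) = \ell m + \ell + m + 1$ for $\nu\in\bbr\setminus\bbz$; this is precisely hypothesis \eqref{ineq1} of Corollary \ref{dimension cor} with $r=2$ and $a_{11}=a_{10}=a_{01}=a_{00}=1$. Applying that corollary to the self-adjoint bispectral Darboux transformation $\wt\psi$ of bidegree $(d_1,d_2)$ yields the lower bound $\dim\bisls^{2\ell,2m}(\wt\psi)\geq \ell m + \ell + m + 2 - d_1 d_2$ for all $\ell,m$.

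Next I would specialize to $\ell = m$, obtaining $\dim\bisls^{2m,2m}(\wt\psi)\geq m^2 + 2m + 2 - d_1 d_2$, which strictly exceeds $m^2 + m + 1 = m(m+1)/2 + m(m+1)/2 + 1$ exactly when $m\geq d_1 d_2$. Taking $m = d_1 d_2$ places us in position to apply Proposition \ref{fundamental dimension estimate}. To do so I must check its hypotheses with $\ell = m = d_1 d_2$: the coefficients of the bispectral operators of $\wt\psi$ are holomorphic in neighborhoods of $\Gamma_1$ and $\Gamma_2$ by assumption; and since the endpoints of $\Gamma_i$ are $\pm p_i$, condition (i) of the proposition reduces to the requirement that every operator in $\bisls(\wt\psi)$ and $\bisrs(\wt\psi)$ be invariant under $x\mapsto -x$ and $y\mapsto -y$, respectively. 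This is exactly the content of Corollary \ref{automatic symmetry} for self-adjoint bispectral Darboux transformations of $\psi_{\Be(\nu)}$. The proposition then produces a positive-order operator $\mathfrak d\in\bisls^{2d_1d_2,2d_1d_2}(\wt\psi)$ that is bisymmetric with respect to $(\Gamma_1,\Gamma_2)$.

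Finally, Theorem \ref{commuting bisymmetry} applies directly: $\wt\psi$ is holomorphic in a neighborhood of $\Gamma_1\times\Gamma_2$ and satisfies \eqref{niceness condition} by hypothesis, and $\mathfrak d$ has holomorphic coefficients near $\Gamma_1$; hence $\mathfrak d$ commutes with the integral operator $T$ with kernel $\wt K(x,y) = \int_{\Gamma_2}\wt\psi(x,z)\wt\psi(y,z)\,dz$, which is the assertion of the theorem.

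The genuinely new input relative to the rank $1$ case is Lemma \ref{bessel lemma}, whose proof requires extracting the formally bisymmetric part of the Bessel Fourier algebra $\langle \mathfrak d_{\Be(\nu),x}, x\partial_x, x^2\rangle$ and checking that the explicit operators $\mathfrak a_{jk}$ and $\mathfrak b_{jk}$ exhaust it; but that lemma is already established above. Given it, the remaining work is the bookkeeping just described, and the only point requiring real care is the verification of the symmetry condition (i) in Proposition \ref{fundamental dimension estimate}, which rests on Corollary \ref{automatic symmetry} and ultimately on the explicit classification of bispectral Darboux transformations of $\psi_{\Be(\nu)}$ from \cite{BHYcmp}. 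Because $\Gamma_1$ and $\Gamma_2$ are finite curves, the kernel $\wt K$ is compactly supported and the analytic subtleties of the Airy case — decay of $\wt\psi$ along semi-infinite contours inside $\Sigma_\epsilon$ — do not arise here.
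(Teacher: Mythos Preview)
Your proposal is correct and follows essentially the same route as the paper's proof: combine Lemma \ref{bessel lemma} with Corollary \ref{dimension cor} to get the dimension bound, specialize to $\ell=m=d_1d_2$, invoke Corollary \ref{automatic symmetry} to verify hypothesis (i) of Proposition \ref{fundamental dimension estimate}, and conclude via Theorem \ref{commuting bisymmetry}. Your write-up is in fact more explicit than the paper's in checking the hypotheses of Proposition \ref{fundamental dimension estimate}, but the logical skeleton is identical.
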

\begin{remk}
As in the rank $1$ and the Airy case, the assumptions of Theorem \ref{bessel theorem} will be automatically satisfied as long as the end points of $\Gamma_1$ and $\Gamma_2$
satisfy the symmetry condition stated above and the curves avoid the branching point 0 of the Bessel functions and the roots of the polynomials $p(x)$ and $q(y)$, 
entering in the definition of the concrete bispectral Darboux transformation as in \eqref{dual-poly}. 
\end{remk}
\begin{proof}
By Lemma \ref{bessel lemma} and Corollary \ref{dimension cor}, we have
$$\dim\bisls^{2\ell,2m}(\wt\psi)\geq \ell m + \ell + m + 2 - d_1d_2$$
for some constant $c$.  Thus for $\ell=m$, we have
$$\dim\bisls^{2m,2m}(\wt\psi)\geq m^2 + 2m + 2 - d_1d_2 > m^2 + m + 1,$$
for $m\geq d_1d_2$.  Proposition \ref{fundamental dimension estimate} and Corollary \ref{automatic symmetry} imply that $\bisls^{2d_1d_2,2d_1d_2}(\wt\psi)$ 
contains a differential operator bisymmetric with respect to $(\Gamma_1,\Gamma_2)$. This differential operator commutes with the integral operator specified in the statement 
of the proposition by Theorem \ref{commuting bisymmetry}.
\end{proof}
%%%%%%%%%
\section{Examples}
In this section we illustrate how Theorems \ref{exp theorem}, \ref{airy theorem}, and \ref{bessel theorem} are applied to construct differential operators commuting with the 
the integral operators associated to all self-adjoint bispectral Darboux transformations $\wt\psi(x,y)$ from the elementary bispectral functions 
$\psi_{\exp}$, $\psi_{\Ai}$, and $\psi_{\Be(\nu)}$. Among the examples are an integral operator which commutes simultaneously with two differential operators of orders 6 and 8, and 
an integral operator which commutes with a differential operator of order 22.

{\em{The proofs of Theorems \ref{exp theorem}, \ref{airy theorem}, and \ref{bessel theorem}  are constructive and give rise to an effective algorithm 
for the construction of a commuting differential operator. This algorithm only involves solving linear systems of equations.}}
\subsection{The algorithm}
\label{6.1}
To describe the constructive algorithm mentioned above, we first review the definition of the bilinear concomitant.
\begin{defn}
\label{conco}
Let $\mathfrak d$ be a differential operator
$$\mathfrak d = \sum_{j=0}^m d_j(x)\partial_x^j.$$
The {\bf{bilinear concomitant}} of $\mathfrak d$ is the bilinear form $\mathcal{C}_{\mathfrak d}(\cdot,\cdot; p)$ defined on pairs of sufficiently smooth functions $f(x),g(x)$ by
\begin{align*}
\mathcal{C}_{\mathfrak d}(f,g;p)
  & = \sum_{j=1}^m \sum_{k=0}^{j-1} (-1)^k f^{(j-1-k)}(x)(d_j(x)g(x))^{(k)}|_{x=p}\\
  & = \sum_{j=1}^m \sum_{k=0}^{j-1}\sum_{\ell=0}^k\binom{k}{\ell} (-1)^k f^{(j-1-k)}(x)d_j(x)^{(k-\ell)}g(x)^{(\ell)}|_{x=p}.
\end{align*}
Equivalently, for $C_{\mathfrak d}(x)$ the $m\times m$ matrix whose $n,\ell$-th entry is given by
\begin{equation}
\label{C-matr}
C_{\mathfrak d}(x)_{n,\ell} = \sum_{j=\ell+n+1}^{m}\binom{j-n}{\ell-1} (-1)^{j-n} d_j(x)^{(j+1-n-\ell)},
\end{equation}
the bilinear concomitant may be expressed as
$$\mathcal{C}_{\mathfrak d}(f,g;p) = [f(x)\ f'(x)\ \dots\ f^{(m)}(x)] C_{\mathfrak d}(x)[g(x)\ g'(x)\ \dots\ g^{(m)}(x)]^T|_{x=p}.$$
\end{defn}
Note also that via integration by parts, we find that
\begin{equation}\label{integral biconcomitant}
\int_{x_0}^{x_1}[(\mathfrak d\cdot f(x))g(x)-f(x)(\mathfrak d^*\cdot g(x))] dx = \mathcal{C}_{\mathfrak d}(f,g;x_1)-\mathcal{C}_{\mathfrak d}(f,g;x_0).
\end{equation}
In this way the bilinear concomitant may be seen to act as a means of comparison between the formal adjoint $\mathfrak d^*$ of a differential operator $\mathfrak d$ and the adjoint of $\mathfrak d$ as an unbounded linear operator on a sufficiently nice space of functions on a path connecting $x_0$ to $x_1$.

{\bf{Algorithm:}} Let $\wt{\psi}(x,y)$ be a self-adjoint bispectral Darboux transformation from one of the elementary bispectral functions 
$\psi_{\exp}$, $\psi_{\Ai}$ or $\psi_{\Be(\nu)}$ of order $(d_1,d_2)$. Fix contours $\Gamma_1$ and $\Gamma_2$ as in 
Theorems \ref{exp theorem}, \ref{airy theorem}, and \ref{bessel theorem}. The following algorithm constructs a 
differential operator commuting with the integral operator
\begin{equation}
\label{oper}
T: f(x)\mapsto \int_{\Gamma_1}\wt{K}(x,y)f(y)dy \quad \text{with kernel} \quad \wt{K}(x,y) = \int_{\Gamma_2} \wt{\psi}(x,z)\wt{\psi}(y,z) dy.
\end{equation}
\begin{enumerate}
\item[(Sketch)]  The dimension estimates from Corollary \ref{dimension cor} and Lemmas \ref{exp lemma}, \ref{airy lemma}, and \ref{bessel lemma} 
imply that
$$\dim(\bisls^{2\ell,2m}(\wt{\psi}))\geq \ell m + \ell+m-d_1d_2+1.$$
By Proposition \ref{fundamental dimension estimate} for $\ell :=d_1d_2+1$ the space $\bisls^{2\ell,2\ell}(\wt{\psi})$
 will contain a nonconstant differential operator which is bisymmetric with respect to $\Gamma_1\times\Gamma_2$.
Theorem \ref{commuting bisymmetry} implies that this operator will commute with the integral operator in \eqref{oper}.

\item[(Step 1)] Fix $\ell = d_1d_2+1$ and construct explicitly a $(\ell^2 + \ell + 2)$-dimensional subspace of $\bisls^{2\ell,2\ell}(\wt{\psi})$  as given by the LHS of \eqref{subspace}
in the proof of Theorem \ref{control thm}.
In \eqref{subspace} one plugs the explicit description of $\bisls^{2\ell,2\ell}(\psi)$ for $\psi = \psi_{\exp}$, $\psi_{\Ai}$ or $\psi_{\Be(\nu)}$ 
from Lemmas \ref{exp lemma}, \ref{airy lemma}, and \ref{bessel lemma}.

\item[(Step 2)]
A differential operator $\mathfrak d$ in this $(\ell^2+\ell+2)$-dimensional space will be bisymmetric with respect to $\Gamma_1\times\Gamma_2$
if and only if it is in the kernel of the $\bbc$-linear map
$$B: \bisls^{2\ell,2\ell}(\wt{\psi})\rightarrow \bbc^{(\ell+1)^2}\times \bbc^{(\ell+1)^2},
\ \ \mathfrak d\mapsto ((\mathcal{C}_{\mathfrak d}(x^j,x^k;p_1)_{j,k=0}^{\ell},(\mathcal{C}_{b_{\wt\psi}(\mathfrak d)}(y^j,y^k;p_2))_{j,k=0}^\ell),$$
where $\mathcal{C}_{\mathfrak d}$ is the bilinear concomitant of $\mathfrak d$, $p_1$ is one of the finite endpoints of $\Gamma_1$ and $p_2$ is 
one of the final endpoints of $\Gamma_2$. We solve this homogeneous linear system of equations 
and find a vector space of operators $\mathfrak d$ which will commute with the integral operator in \eqref{oper} by Theorems 
\ref{exp theorem}, \ref{airy theorem}, and \ref{bessel theorem}.
Note that it follows from Lemma \ref{symmetric form} that the rank of this system of linear equations is necessarily $\leq \ell(\ell+1)$, so that the space of solutions will be at least two dimensional.
In particular, it must contain at least one non-constant operator.
\end{enumerate}

\label{ex}
\subsection{Kernels of integral operators}
Some of our examples will involve bispectral functions $\wt\psi(x,y)$ which are families of eigenfunctions of a fixed Schr\"{o}dinger operator $\partial_y^2-v(y)\in \bisr(\wt\psi)$.
%In this case, the specific form of the kernel $\wt K(x,y)$ of the integral operator commuting with a differential operator may be explicitly computed.
Before moving into those examples, we derive a second expression for for these types of kernels $\wt K(x,y)$ which does not involve integration. 
\begin{lem}
\label{2nd-kernel}
Suppose that $\wt\psi(x,y)$ is a bispectral meromorphic function satisfying the equation
$$\wt\psi_{yy}(x,y) - v(y)\wt\psi(x,y) = f(x)\wt\psi(x,y).$$
Let $\Gamma$ be a smooth curve in the extended complex plain with endpoints $p_1,p_2$ such that $\wt\psi(x,y)$ is holomorphic in a neighborhood of $\Gamma$ and for some fixed $x_0,y_0\in\bbc$ with $x_0\neq y_0$ we have $f(x_0)\neq f(y_0)$, and $v(y),\wt K(x,y),K_y(x,y),K_{yy}(x,y),\in L^2(\Gamma)$ for $x=x_0,y_0$.
Then $\wt K(x,y) = \int_\Gamma \wt\psi(x,z)\wt\psi(y,z)dz$ satisfies
\begin{equation}
\label{kernel equation}
\wt K(x_0,y_0) = \frac{1}{f(x_0)-f(y_0)}(\wt\psi_z(x_0,z)\wt\psi(y_0,z)-\wt\psi(x_0,z)\wt\psi_z(y_0,z))|_{z=p_1}^{p_2}.
\end{equation}
\end{lem}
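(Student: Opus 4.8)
The plan is to recognize the right-hand side of \eqref{kernel equation} as the Wronskian-type boundary term that arises from integrating the Lagrange identity for the Schr\"odinger operator $\mathfrak d_y = \partial_y^2 - v(y)$ against the two functions $\wt\psi(x_0, \cdot)$ and $\wt\psi(y_0, \cdot)$ over $\Gamma$. First I would write down the hypothesis in the form $\wt\psi_{zz}(x,z) - v(z)\wt\psi(x,z) = f(x)\wt\psi(x,z)$ for $x = x_0$ and $x = y_0$ separately. Multiplying the $x_0$-equation by $\wt\psi(y_0,z)$ and the $y_0$-equation by $\wt\psi(x_0,z)$ and subtracting, the $v(z)$-terms cancel and one obtains
$$
\wt\psi_{zz}(x_0,z)\wt\psi(y_0,z) - \wt\psi(x_0,z)\wt\psi_{zz}(y_0,z) = (f(x_0) - f(y_0))\,\wt\psi(x_0,z)\wt\psi(y_0,z).
$$
The left-hand side is exactly $\partial_z$ of the Wronskian $\wt\psi_z(x_0,z)\wt\psi(y_0,z) - \wt\psi(x_0,z)\wt\psi_z(y_0,z)$, a routine verification by the product rule.

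Next I would integrate this identity over $\Gamma$. The right-hand side integrates to $(f(x_0) - f(y_0))\int_\Gamma \wt\psi(x_0,z)\wt\psi(y_0,z)\,dz = (f(x_0)-f(y_0))\wt K(x_0,y_0)$ by the definition of $\wt K$. The left-hand side, being a total $z$-derivative, integrates to the boundary evaluation $(\wt\psi_z(x_0,z)\wt\psi(y_0,z) - \wt\psi(x_0,z)\wt\psi_z(y_0,z))|_{z=p_1}^{p_2}$ by the fundamental theorem of calculus along the smooth curve $\Gamma$. Dividing by $f(x_0) - f(y_0)$, which is nonzero by hypothesis, yields \eqref{kernel equation}. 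The holomorphicity of $\wt\psi$ near $\Gamma$ guarantees that the Wronskian is a genuine antiderivative with no singularities on the contour, so the fundamental theorem of calculus applies without subtlety.

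The main obstacle is justifying the integration and the boundary evaluation when $\Gamma$ is semi-infinite, i.e.\ when $p_1$ or $p_2$ is at infinity in the extended plane. Here the $L^2(\Gamma)$ assumptions on $\wt K(x,y)$, $K_y(x,y)$, $K_{yy}(x,y)$ and $v(y)$ (at $x = x_0, y_0$) are what one needs: they ensure that all the integrands appearing are absolutely integrable over $\Gamma$ (so Fubini/the FTC apply) and, crucially, that the Wronskian $\wt\psi_z(x_0,z)\wt\psi(y_0,z) - \wt\psi(x_0,z)\wt\psi_z(y_0,z)$ has a well-defined limit at an infinite endpoint---indeed, since its $z$-derivative $(f(x_0)-f(y_0))\wt\psi(x_0,z)\wt\psi(y_0,z)$ is integrable near that endpoint (a consequence of $\wt K \in L^2$ together with Cauchy--Schwarz), the Wronskian converges there. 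So the one point requiring care is to spell out that the $L^2$ hypotheses force decay of $\wt\psi(x_0,z)\wt\psi(y_0,z)$ and of the Wronskian at the infinite end of $\Gamma$, after which the computation of the previous paragraph goes through verbatim with the boundary term interpreted as the appropriate limit. Everything else is a one-line application of the Lagrange identity.
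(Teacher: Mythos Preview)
Your proof is correct and takes essentially the same approach as the paper: both use the Lagrange identity for the Schr\"odinger operator $\partial_z^2 - v(z)$ to rewrite the integrand as the $z$-derivative of the Wronskian, integrate over $\Gamma$, and divide by $f(x_0)-f(y_0)$. The only cosmetic difference is that the paper integrates the two Schr\"odinger equations first and then subtracts, whereas you subtract pointwise and then integrate; your discussion of the infinite-endpoint case is in fact more careful than the paper's own proof, which passes over that point.
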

\begin{remk}
The assumptions that we make regarding the kernel $\wt K(x,y)$ are weaker than \eqref{niceness condition}, and in particular if we identify $\Gamma=\Gamma_2$ then \eqref{niceness condition} implies the assumptions for $\wt K(x,y)$ in the lemma for a.e. $x$ on the curve $\Gamma_1$.
\end{remk}
\begin{proof}
We see that
$$\int_\Gamma \wt\psi_{zz}(x_0,z)\wt\psi(y_0,z)dz = \int_\Gamma v(z)\wt\psi(x_0,z)\wt\psi(y_0,z)dz + f(x_0)\wt K(x_0,y_0),$$
$$\int_\Gamma \wt\psi(x_0,z)\wt\psi_{zz}(y_0,z)dz = \int_\Gamma v(z)\wt\psi(x_0,z)\wt\psi(y_0,z)dz + f(y_0)\wt K(x_0,y_0),$$
and therefore
\begin{align*}
(f(y_0)-f(x_0))\wt K(x_0,y_0)
  & = \int_{\Gamma} (\wt\psi(x_0,z)\wt\psi_{zz}(y_0,z)-\wt\psi_{zz}(x_0,z)\wt\psi(y_0,z))dz\\
  & = (\wt\psi_z(x_0,z)\wt\psi(y_0,z)-\wt\psi(x_0,z)\wt\psi_z(y_0,z))|_{z=p_1}^{p_2}.
\end{align*}
Dividing by $f(y_0)-f(x_0)$ we obtain the expression stated in the lemma.
\end{proof}
In the case that $\wt\psi(x,y)$ is a self-adjoint bispectral Darboux transformation of $\psi(x,y)$ we can also relate the kernel $\wt K(x,y)$ obtained from $\wt\psi(x,y)$ to the kernel $K(x,y)$ obtained from $\psi(x,y)$. This coupled with Lemma \ref{2nd-kernel}, gives a second expression for the kernel $\wt K(x,y)$ which does not involve integration. 
\begin{lem}
Suppose that $\psi(x,y)$ is a bispectral meromorphic function and that $\wt\psi(x,z) = \frac{1}{p(x)q(z)}\mathfrak w_z\cdot\psi(x,z)$ is a self-adjoint 
bispectral Darboux transformation of $\psi(x,z)$.
Let $\Gamma$ be a smooth curve in the extended complex plain with endpoints $p_1,p_2$ such that $\psi(x,y)$ is holomorphic in a neighborhood of $\Gamma$ and for some fixed $x_0,y_0\in\bbc$ which are not poles of $p(x)$ the partial derivatives $\partial_y^j\psi(x,y)\in L^2(\Gamma)$ for all $j\geq 0$ and $x=x_0,y_0$.
Then 
\begin{multline}
\label{Darboux kernel}
\wt K(x,y) = \frac{p(y)}{p(x)} K(x,y) + \\ \sum_{k=0}^{d_2} \sum_{j=0}^{k-1} b_j(y) (-1)^j \left[ \big( \partial^{k-1-j}_z \psi(x,z) \big) \, 
\partial_z^j \left(\frac{b_k(z)}{p(x)q(z)}\wt \psi(y,z) \right) \right]_{z=p_1}^{p_2}.
\end{multline}
where $b_k(z)$ denote the coefficients of $\mathfrak w_z$, i.e., $\mathfrak w_z = \sum_{k=0}^{d_2} b_j(z)\partial_z^j$ and $p_1$ and $p_2$ are the endpoints of $\Gamma$.
\end{lem}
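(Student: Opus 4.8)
The plan is to substitute the Darboux representation of $\wt\psi$ into the integral defining $\wt K$, integrate by parts once in the integration variable, and then use the self-adjointness of the Darboux data to collapse the resulting bulk integral back to $K(x,y)$; the price of the integration by parts is the bilinear concomitant at the endpoints of $\Gamma$, which will produce the displayed sum. Concretely, first I would set $\mathfrak v_z := \frac{1}{q(z)}\mathfrak w_z$, a differential operator whose $k$-th coefficient is $b_k(z)/q(z)$, so that $\wt\psi(x,z) = \frac{1}{p(x)}\mathfrak v_z\cdot\psi(x,z)$ and therefore
$$\wt K(x,y) = \frac{1}{p(x)}\int_\Gamma \big(\mathfrak v_z\cdot\psi(x,z)\big)\,\wt\psi(y,z)\,dz.$$
Applying the integration-by-parts identity \eqref{integral biconcomitant} for $\mathfrak v_z$ on $\Gamma$ rewrites this as
$$\wt K(x,y) = \frac{1}{p(x)}\int_\Gamma \psi(x,z)\,\big(\mathfrak v_z^{*}\cdot\wt\psi(y,z)\big)\,dz \;+\; \frac{1}{p(x)}\Big[\mathcal{C}_{\mathfrak v}\big(\psi(x,\cdot),\wt\psi(y,\cdot);z\big)\Big]_{z=p_1}^{z=p_2}.$$

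The crux is evaluating $\mathfrak v_z^{*}\cdot\wt\psi(y,z)$. Comparing $\wt\psi(x,z)=\frac{1}{p(x)q(z)}\mathfrak w_z\cdot\psi(x,z)$ with \eqref{dual-poly} identifies $\mathfrak w=b_\psi(\mathfrak u)$, and then $\mathfrak v_z^{*}=\mathfrak w_z^{*}\,\frac{1}{q(z)}$ together with $\wt\psi(y,z)=\frac{1}{p(y)q(z)}\mathfrak w_z\cdot\psi(y,z)$ gives $\mathfrak v_z^{*}\cdot\wt\psi(y,z)=\frac{1}{p(y)}\,\mathfrak w_z^{*}\frac{1}{q(z)^2}\mathfrak w_z\cdot\psi(y,z)$. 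Since $\wt\psi$ is a \emph{self-adjoint} bispectral Darboux transformation of $\psi$, we have $\wt p=p$, $\wt q=q$, $b_\psi(\wt{\mathfrak u})=b_\psi(\mathfrak u)^*$, so the operator $\mathfrak w_z^{*}\frac{1}{q(z)^2}\mathfrak w_z$ is exactly the spectral operator appearing in \eqref{spec-eq4}, read in the second variable, whose eigenvalue on $\psi(y,z)$ is $p(y)^2$. Hence $\mathfrak v_z^{*}\cdot\wt\psi(y,z)=p(y)\psi(y,z)$, the bulk integral equals $p(y)\int_\Gamma\psi(x,z)\psi(y,z)\,dz=p(y)K(x,y)$, and dividing by $p(x)$ yields the leading term $\frac{p(y)}{p(x)}K(x,y)$ of \eqref{Darboux kernel}.

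It then remains to unwind the concomitant term. By Definition \ref{conco} applied to $\mathfrak v_z$ with $f(z)=\psi(x,z)$, $g(z)=\wt\psi(y,z)$ and coefficients $b_k(z)/q(z)$, and pushing the constant $1/p(x)$ into the inner bracket,
$$\frac{1}{p(x)}\Big[\mathcal{C}_{\mathfrak v}\big(\psi(x,\cdot),\wt\psi(y,\cdot);z\big)\Big]_{z=p_1}^{z=p_2} = \sum_{k=1}^{d_2}\sum_{j=0}^{k-1}(-1)^j\Big[\big(\partial_z^{k-1-j}\psi(x,z)\big)\,\partial_z^{j}\!\left(\frac{b_k(z)}{p(x)q(z)}\wt\psi(y,z)\right)\Big]_{z=p_1}^{z=p_2},$$
which is precisely the boundary sum in \eqref{Darboux kernel} (the $k=0$ summand being empty).

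Finally I would record the analytic justification, which parallels the proof of Lemma \ref{2nd-kernel}: holomorphy of $\psi$ near $\Gamma$ together with the stated $L^2(\Gamma)$ integrability of $\psi$ and its $z$-derivatives at $x=x_0,y_0$ ensures that the integrands above are products of $L^2$ functions, hence $L^1$, so all integrals converge absolutely and the integration by parts is legitimate, with the endpoint values understood as limits when $p_1$ or $p_2$ is $\infty$. The main obstacle I anticipate is purely organizational: transcribing the spectral equation \eqref{spec-eq4} into the correct variable with the correct normalization — this is the one place where the \emph{self-adjointness} of the Darboux transformation (Theorem \ref{bisp-Darb-thm} and Definition \ref{selfadj}) is genuinely used — and matching the expanded bilinear concomitant of $\frac{1}{q(z)}\mathfrak w_z$ term by term against \eqref{Darboux kernel}.
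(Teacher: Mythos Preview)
Your proposal is correct and follows essentially the same route as the paper: substitute the Darboux representation of $\wt\psi$, integrate by parts in $z$ to move the operator $\frac{1}{q(z)}\mathfrak w_z$ off of $\psi(x,z)$, and then invoke the self-adjoint spectral identity $\mathfrak w_z^{*}\,q(z)^{-2}\,\mathfrak w_z\cdot\psi(y,z)=p(y)^2\psi(y,z)$ (equation \eqref{spec-eq4}) to collapse the bulk integral to $\frac{p(y)}{p(x)}K(x,y)$, with the bilinear concomitant supplying the boundary sum. The only cosmetic difference is that the paper expands both factors $\wt\psi(x,z)$ and $\wt\psi(y,z)$ before integrating by parts, whereas you keep $\wt\psi(y,z)$ intact; the resulting boundary term is identical.
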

\begin{remk}
The assumptions that we make regarding the kernel $K(x,y)$ are again implied by \eqref{niceness condition}.
\end{remk}
\begin{proof}
Note that
$$\wt K(x,y) = \int_\Gamma \wt\psi(x,z)\wt\psi(y,z) dz = \frac{1}{p(x)p(y)}\int_\Gamma \frac{1}{q(z)^2}(\mathfrak w_z\cdot\psi(x,z))(\mathfrak w_z\cdot\psi(y,z))dz.$$
By integration by parts we obtain that the far right term equals
$$\frac{1}{p(x)p(y)}\int_\Gamma\psi(x,z)(\mathfrak w_z^* q(z)^{-2} \mathfrak w_z)\cdot\psi(y,z) + B(x,y)$$
where 
$$B(x,y) = \sum_{k=0}^{d_2} \sum_{j=0}^{k-1} b_j(y) (-1)^j \left[ \big( \partial^{k-1-j}_z \psi(x,z) \big) \, 
\partial_z^j  \left(\frac{b_k(z)}{p(x)q(z)}\wt \psi(y,z)\right)\right]_{z=p_1}^{p_2}.$$
Applying the fact that $\mathfrak w_z^*q(z)^{-2}\mathfrak w_z = p(y)^2$, the conclusion of the lemma follows immediately.
\end{proof}
\subsection{Examples arising from classical bispectral functions}
\begin{ex}
Let $s,t\in\bbr$, and consider the elementary bispectral function  $\psi_{\exp}(x,y)= e^{xy}$ with $\Gamma_1 = [-t,t]$ and $\Gamma_2 = [-s,s]i$.  We see
$$
K(x,y) = i\int_{-s}^{s} e^{ixz}e^{iyz} dz = \frac{2i}{x+y}\sin(s(x+y)).
$$
Thus the integral operator
$$T(f(x)) := \int_{-t}^t \frac{\sin(s(x+y))}{x+y}f(y)dy$$
commutes with some differential operator in $\bisls(\psi_{\exp})$.  Since $\dim\bisls^{2\ell,2m}(\psi_{\exp}) = (\ell+1)(m+1)+1$, 
Theorem \ref{exp theorem} implies that $T$ commutes with a nonconstant operator in $\bisls^{2,2}(\psi_{\exp})$.  
This space is described in Lemma \ref{exp lemma}.
Thus, $T$ commutes with an operator of the form
$$ c_0 + c_1x^2 + \partial_x(c_2x^2 + c_3)\partial_x$$
for some $c_0, c_1, c_2, c_3 \in \bbc$.
For this to be symmetric with respect to $\Gamma_1$, we must have $c_2t^2+c_3 = 0$.  The image of $\mathfrak d$ under $b_{\psi_{\exp}}$ is 
$$ c_0 + c_3y^2 + \partial_y(c_2y^2 + c_1)\partial_y.$$
For this to be symmetric with respect to $\Gamma_2$, we must have $-c_2s^2+c_1 = 0$.  Taking $c_2=1$ and $c_0=0$, gives that $T$ commutes with the differential operator
$$(x^2-t^2)\partial_x^2 + 2x\partial_x + s^2x^2.$$
\end{ex}

\begin{ex}
Let $\Ai(x)$ be the Airy function of the first kind and let $s,t\in\bbr$.  Consider the curves $\Gamma_2 = [s,\infty)$ and $\Gamma_1 = [t,\infty)$.  Then by (\ref{kernel equation}) we have
$$
K(x,y) = \frac{\Ai'(x+s)\Ai(y+s) - \Ai(x+s)\Ai'(y+s)}{x-y}.
$$
Therefore the integral operator 
$$T: f(x)\mapsto \int_t^\infty K(x,y)f(y)dy$$ 
commutes with a differential operator in $\bisls(\psi_{\Ai})$.  By Lemma \ref{airy lemma}, we have that $\dim(\bisls^{2\ell,2m}(\psi_{\Ai})) = (\ell+1)(m+1)$.
Theorem \ref{airy theorem} implies that $T$ commutes with a differential operator in $\bisls^{2,2}(\psi_{\Ai})$. 
 Explicitly, $T$ commutes with a differential operator of the form
$$c_0 + c_1x - c_3x^2 + \partial_x(c_2 + c_3x)\partial_x$$
for some $c_0, c_1, c_2, c_3 \in \bbc$.
For this to be symmetric with respect to $\Gamma_1$, we must have $c_2 +c_3t=0$.
The image of this operator under $b_{\psi_{\Ai}}$ is
$$c_0 - c_1y - c_3y^2 + \partial_x(c_1+c_2 + c_3y)\partial_y.$$
For this to be symmetric with respect to $\Gamma_2$, we must have $c_1+c_2+c_3s=0$.
Taking $c_0=0$ and $c_3=1$, this shows that $T$ commutes with the operator
$$(x-t)\partial_x^2 + \partial_x - (x^2 + (s-t)x).$$
\end{ex}
\begin{ex}
Consider the elementary bispectral Bessel function 
$$\psi_{\Be(\nu)}(x,y) = \sqrt{xy}K_{\nu+1/2}(xy),$$ 
along with the curves $\Gamma_2 = \{se^{i\theta}: 0\leq \theta\leq \pi\}$ and $\Gamma_1 = \{te^{i\theta}: 0\leq \theta\leq \pi\}$.  By \eqref{kernel equation} we have that
\begin{equation}
K(x,y) = \frac{\sqrt{xy}}{x^2-y^2}(xzK_{\nu+1/2}'(xz)K_{\nu+1/2}(yz)-yzK_{\nu+1/2}(xz)K_{\nu+1/2}'(yz))|_{z=-s}^s
\end{equation}
and for this kernel, the integral operator $T: f(x)\mapsto \int_{\Gamma_1} K(x,y)f(y)dy$ commutes with a differential operator in $\bisls({\psi_{\Be(\nu)}})$.  
To determine which one, we should try to find a bisymmetric operator with respect to $(\Gamma_1,\Gamma_2)$.  
Again $\dim\bisls^{2\ell,2m}({\psi_{\Be(\nu)}}) = (\ell+1)(m+1)$ and Theorem \ref{bessel theorem} implies that $T$ commutes with an operator in $\bisls^{2,2}({\psi_{\Be(\nu)}})$.  
By Lemma \ref{bessel lemma}, the operators in $\bisls^{2,2}({\psi_{\Be(\nu)}})$ are of the form
$$\left(c_0 + c_1x^2-\frac{c_3\nu(\nu+1)}{x^2}\right) + \partial_x(c_2x^2 + c_3)\partial_x$$
for some constants $c_0,c_1,c_2,c_3$. The image under $b_{\psi_{\Be(\nu)}}$ of this operator is
$$\left(c_0 + c_3y^2-\frac{c_1\nu(\nu+1)}{y^2}\right) + \partial_y(c_2y^2 + c_1)\partial_y$$
Thus for this to be bisymmetric with respect to $(\Gamma_1,\Gamma_2)$, we need $c_2t^2 + c_3=0$ and $c_2s^2 + c_1 = 0$.  Taking $c_0 = 0$ and $c_2 = 1$, this says that $T$ commutes with the second-order operator
$$\partial_x(x^2 - t^2)\partial_x-\left(s^2x^2+t^2\frac{\nu(\nu+1)}{x^2}\right).$$
\end{ex}
\subsection{Examples arising from bispectral Darboux transformations}
In this subsection, we present in increasing difficulty three examples of integral operators commuting with differential operators
whose minimal orders are 2, 6, and 22, respectively.
\begin{ex}
Consider the bispectral meromorphic function
$$\wt\psi(x,y) = e^{xy}\left(1-\frac{1}{xy}\right),$$
along with the curves $\Gamma_1 = \{te^{i\theta}: 0\leq\theta\leq \pi\}$ and $\Gamma_2 = \{se^{i\theta}: |\theta|\leq \pi/2\}$.  By \eqref{kernel equation}, we have that
$$
\wt K(x,y) = \frac{2i}{x+y}\sin((x+y)s) + \frac{2i}{xys}\cos((x+y)s).
$$
Note that
$$\wt\psi(x,y) = \frac{1}{xy}\mathfrak u\cdot \psi_{\exp}(x,y)$$
for $\mathfrak u = x\partial_x - 1$ with
$$\partial_x^2 = \left(\partial_x + \frac{1}{x}\right)\left(\partial_x - \frac{1}{x}\right)$$
so that $\wt\psi(x,y)$ is a self-adjoint bispectral Darboux transformation of $\psi_{\exp}(x,y)$ of order $(1,1)$.  Thus by 
Theorem \ref{exp theorem}, the integral operator
$$T: f(x)\mapsto \int_{\Gamma_1}\left( \frac{1}{x+y}\sin((x+y)s) + \frac{1}{xys}\cos((x+y)s)\right)f(y)dy$$
commutes with a differential operator in $\bisls^{2,2}(\wt\psi)$.  That is, $T$ commutes with a differential operator of the form
$$c_0 + c_1x^2 + \frac{-2c_3}{x^2} + \partial_x(c_2x^2 + c_3)\partial_x$$
(which are the operators obtained from the space in the LHS of \eqref{subspace}). The image of such an operator under 
$b_{\wt\psi}$ is
$$c_0 + c_3y^2 + \frac{-2c_1}{y^2} + \partial_y(c_2y^2 + c_1)\partial_y.$$
Therefore for bisymmetry with respect to $(\Gamma_1,\Gamma_2)$, we need $-c_2t^2+c_3=0$ and $c_2s^2+c_1=0$. 
Taking $c_2=1$ and $c_0=0$, we obtain that $T$ commutes with the operator
$$t^2x^2 + \frac{-2s^2}{x^2} + \partial_x(x^2 - s^2)\partial_x.$$
\end{ex}

\begin{ex}
In this example we consider self-adjoint bispectral Darboux transformations of the Bessel function $\psi_{\Be(\nu)}(x,y)$ of bidegree $(2,2)$ corresponding to subspaces of the kernel of $\mathfrak d_{\Be(\nu),x}^2$.
To do so, we consider rational factorizations of the square of the Bessel operator $\mathfrak d_{\Be(\nu),x}^2$
$$\left(\partial_x^2-\frac{\nu(\nu+1)}{x^2}\right)^2 = \wt{\mathfrak a}\mathfrak a,\ \text{where}\ \wt{\mathfrak a} = \left(\partial_x^2-\frac{a}{x}\partial_x + \frac{b}{x^2}\right),\ \mathfrak a = \left(\partial_x^2+\frac{a}{x}\partial_x + \frac{c}{x^2}\right)$$
Multiplying out the product, and solving the associated system of equations in three variables, we see that $(a,b,c)$ is either
$$(2\nu-1,(\nu-1)(\nu+1),\nu(\nu-2))\ \ \text{or}\ \ (-2\nu-3,\nu(\nu+2),(\nu+1)(\nu+3)).$$
Furthermore, since $b=a+c$ we see that $\wt{\mathfrak a}^*=\mathfrak a$ and $\mathfrak u := x^2\mathfrak a$ and $\mathfrak u^*$ are in $\bisl(\psi)$ with $b_{\psi_{\Be(\nu)}}(\mathfrak u_x) = \mathfrak u_y$ and $b_{\psi(\nu)}(\mathfrak u_x^*) = \mathfrak u_y^*$.
For sake of concreteness, we will take
$$\mathfrak u = x^2\partial_x^2 + (2\nu-1)x\partial_x + \nu(\nu-2).$$
Thus $\mathfrak u$ defines a symmetric bispectral Darboux transformation (for $K(xy) = K_{\nu+1/2}(xy)$)
$$\wt\psi(x,y) := \frac{1}{xy}\mathfrak u\cdot\psi_{\Be(\nu)}(x,y) = (\nu^2-\nu-\frac{3}{4})\frac{1}{(xy)^{3/2}}K(xy)+\frac{2\nu}{\sqrt{xy}}K'(xy)+\sqrt{xy}K''(xy).$$

Now consider paths $\Gamma_1,\Gamma_2\subseteq\bbc$ consisting of semicircles of radius $t$ and $s$, respectively, in the upper half plane centered at the origin and oriented counter-clockwise.
Theorem \ref{bessel theorem} implies that the integral operator 
\begin{equation}
\label{last oper}
T: f(x)\mapsto \int_{-t}^t \wt K(x,y)f(y)dy \quad 
\mbox{with kernel} \quad \wt K(x,y) = \int_{-s}^s \wt\psi(x,z)\wt\psi(y,z)dz
\end{equation}
commutes with a differential operator in $\bisls^{8,8}(\wt\psi)$.
Furthermore, this operator may be taken to be an element in the subspace of dimension $21$ consisting of operators of the form
$$\wt{\mathfrak d} := \frac{1}{x^2}\mathfrak u\mathfrak d_1\mathfrak u^*\frac{1}{x^2} + x^2\mathfrak d_2 x^2,$$
for $\mathfrak d_1\in \bisls^{4,8}(\psi_{\Be(\nu)})$ and $\mathfrak d_2\in\bisls^{4,2}(\psi_{\Be(\nu)})$.
Using our explicit expression for $\bisls^{m,n}(\psi_{\Be(\nu)})$ we may write (for $\mathfrak s_x = x\partial_x$)
\begin{align*}
\wt{\mathfrak d}
  & = x^2\left( \sum_{k=0}^2 \sum_{j=0}^{4-k}a_{jk}\mathfrak s_x^k\mathfrak d_{\Be(\nu)}^j\mathfrak s_x^k + \sum_{k=0}^4\sum_{j=1}^2b_{jk}\mathfrak s_x^k x^{2j}\mathfrak s_x^k \right)x^2\\
  & + \frac{1}{x^2} {\mathfrak u}
  \left( \sum_{k=0}^1 \sum_{j=0}^{2-k}c_{jk}\mathfrak s_x^k\mathfrak d_{\Be(\nu)}^j\mathfrak s_x^k + \sum_{k=0}^2\sum_{j=1}^{1-k}d_{jk}\mathfrak s_x^k x^{2j}\mathfrak s_x^k \right)
  {\mathfrak u}^* \frac{1}{x^2}
\end{align*}

We use a computer to determine the values of the constants $a_{ij},b_{ij},c_{ij},d_{ij}$ that make it bisymmetric with respect to a pair of curves $(\Gamma_1,\Gamma_2)$.
For simplicity, we take $s=t=1$.
We reduce the problem of checking adjointability to a linear algebra problem on a finite dimensional vector space
by calculating the value of the bilinear concomitant on each element of a basis of the space of linear operators above for pairs of polynomials of degree up to one more than the degree of the operator.
This gives us a linear system of $242$ linear equations which we compute with python using a code written with the SymPy symbolic computation library \cite{10.7717/peerj-cs.103}.
The solution of the system of equations is then obtained using FLINT \cite{flint} for precise integer arithmetic.
We found that the space above has a two dimensional subspace of operators commuting with $T$, with basis given by
\begin{center}
\begin{tabular}{c|c|c}
& $a_{00}$ & $a_{10} $  \\\hline
solution 1 & $6\nu^4+60\nu^3+206\nu^2+288\nu+136$ & $-4\nu^4-56\nu^3-256\nu^2-444\nu-240$ \\\hline 
solution 2 & $3\nu^2+9\nu+6$ & $-3\nu^2-15\nu-9$ \\\hline
\end{tabular}
\begin{tabular}{c|c|c|c|c}
& $a_{20} $ & $a_{30} $ & $a_{40}$ & $a_{01}$ \\\hline
solution 1 & $\nu^4+18\nu^3+95\nu^2+150\nu+78$ & $4\nu^2+36\nu+20$ & $6$ & $12\nu^2+60\nu+80$ \\\hline
solution 2 &  $\nu^2+7\nu$ & $3$ & $0$ & $3$\\\hline
\end{tabular}
\begin{tabular}{c|c|c|c|c|c|c}
& $a_{11}$ & $a_{21} $ & $a_{31} $ & $a_{02} $ & $a_{12}$ & $a_{22}$ \\\hline
solution 1 & $-8\nu^2-56\nu-76$ & $2\nu^2+18\nu+18$ & $4$ & $6$ & $-4$ & $1$ \\\hline
solution 2 & $-3$ & $1$ & $0$ & $0$ & $0$ & $0$ \\\hline
\end{tabular}
\begin{tabular}{c|c|c|c}
& $b_{10}$ & $b_{20} $ & $b_{11} $   \\\hline
solution 1 & $4\nu^2+20\nu+24$ & $1$ & $4$ \\\hline
solution 2 & $1$ & $0$ & $0$ \\\hline
\end{tabular}
\begin{tabular}{c|c|c|c|c|c|c}
& $c_{00}$ & $c_{10} $ & $c_{20} $ & $c_{01} $ & $c_{11}$ & $d_{10}$ \\\hline
solution 1 & $-4\nu^2-12\nu-8$ & $4\nu^2+20\nu+16$ & $1$ & $-4$ & $4$ & $0$ \\\hline
solution 2 & $3\nu^2+9\nu+6$ & $1$ & $0$ & $3$ & $0$ & $3$\\\hline
\end{tabular}
\end{center}
\medskip
Each of the above defines a differential operator commuting with the integral operator $T$.
%  6*r^4+60*r^3+206*r^2+288*r+136 -4*r^4-56*r^3-256*r^2-444*r-240 r^4+18*r^3+95*r^2+150*r+78 4*r^2+36*r+20 6 12*r^2+60*r+80 -8*r^2-56*r-76 2*r^2+18*r+18 4 6 -4 1 4*r^2+20*r+24 1 4 -4*r^2-12*r-8 4*r^2+20*r+16 1 -4 4 0

%  3*r^2+9*r+6 -3*r^2-15*r-9 r^2+7*r 3 0 3 -3 1 0 0 0 0 1 0 0 3*r^2+9*r+6 1 0 3 0 3

\end{ex}

\begin{ex}
Let $a,b\in\bbc$ be fixed.
Recall the Airy operator $\mathfrak d_{\Ai,x} = \partial_x^2 -x$.
Consider the bispectral meromorphic function $\wt\psi(x,y)$ defined by
$$\wt\psi(x,y) = \frac{1}{p(x)q(y)}\mathfrak u\cdot\psi_{\Ai}(x,y)$$
in the notation of Figure \ref{elementary bispectral functions} for $p(x) = (1-a^2(x+b^2))^2$, $q(y)=(y-b)^2$, and $\mathfrak u = \mathfrak w^2-a^2$ with
$$\mathfrak w = (1-a^2b^2-a^2x)\partial_x^2 + a^2\partial_x + a^2b^4 + 2a^2b^2x + a^2x^2 - b^2 - x.$$
Note that
$$b_{\Ai}(\mathfrak w) = a^2(b^2 - y)\partial_y^2 + a^2\partial_y + a^2b^4 - 2a^2b^2y + a^2y^2 - b^2 + y$$
and also that  $b_{\Ai}(\mathfrak u^*) = b_{\Ai}(\mathfrak u)^*$ because $\iota_{\Ai,x}: \bisl(\psi_{\Ai})\rightarrow\bisl(\psi_{\Ai})$ is the identity map.
Finally, a direct calculation shows
$$\mathfrak u^*\frac{1}{p(x)^2}\mathfrak u = (\mathfrak d_{\Ai,x}-b)^4.$$
Thus $\wt\psi(x,y)$ is a self-adjoint bispectral Darboux transformation of $\psi(x,y)$ of bidegree $(4,4)$.

From Theorem \ref{airy theorem}, we know that the integral operator with kernel $\wt K(x,y)$ defined by
$$T: f(x)\mapsto \int_t^\infty \wt K(x,y)f(y)dy,\ \ \text{with}\ \ \wt K(x,y) = \int_s^\infty \wt\psi(x,z)\wt\psi(y,z)dz$$
will commute with a nonconstant, bisymmetric ordinary differential operator of order at most $32$ and co-order at most $32$.
Here $s,t$ are chosen such that $b\notin [s,\infty)$ and $a^{-2}-b^2\notin [t,\infty) $.
Moreover, we know that this differential operator will be of the form $\frac{1}{p(x)}\mathfrak u\mathfrak d\mathfrak u^*\frac{1}{p(x)}$ for some formally bisymmetric differential $\mathfrak d\in\bisls^{24,24}(\psi)$.
Using the explicit basis for $\bisls^{\ell,m}(\wt\psi)$ defined above, we can write down a generic element $\wt{\mathfrak d}$ of the subspace of Theorem \ref{control thm} of 
$\bisls^{32,32}(\wt\psi)$ with $273$ free variables $c_{jk}$ and $\wt c_{jk}$:
\begin{align*}
\wt{\mathfrak d}
   := \frac{1}{p(x)}\mathfrak u\mathfrak d\mathfrak u^*\frac{1}{p(x)}  & = \frac{1}{p(x)}\mathfrak u\left(\sum_{j=0}^{12}\sum_{k=0}^{16} c_{jk} (\mathfrak d_{\Ai,x}^jx^k + x^k\mathfrak d_{\Ai,x}^j)\right)\mathfrak u^*\frac{1}{p(x)}\\
  & + p(x)\left(\sum_{j=0}^{3}\sum_{k=0}^{12} \wt c_{jk} (\mathfrak d_{\Ai,x}^jx^k + x^k\mathfrak d_{\Ai,x}^j)\right)p(x)
\end{align*}
We evaluate the associated bilinear concomitant on each pair of polynomials with degree up to $24$ in order to obtain a homogeneous linear system of equations in the unknowns $c_{jk},\wt c_{jk}$.

We solve this numerically using exact integer calculations with the specific values $a=1$, $b=0$ and $s=t=2$.
As before, the solution amounts to solving a certain homogeneous linear system of equations associated to the bilinear concomitant.
However, due to the order of the operators involved, the coefficients in the linear system naturally exceed the accuracy of $8$-byte long integer or double variables.
For this reason, we must rely on exact integer arithmetic, even when setting up the linear system that we must solve.
By carefully writing our python code with the SymPy computational library \cite{10.7717/peerj-cs.103}, we are able to obtain this linear system with exact integer values.
The result is passed to a C code written with the FLINT \cite{flint}, we obtain the kernel of the linear system exactly.
We find that the subspace of differential operators of order and co-order at most $32$ that the integral operator $T$ commutes with  has dimension at least $35$! An example of an operator in this space is the following differential operator of order $22$ and co-order $14$
\begin{align*}
\wt{\mathfrak d} &= \frac{1}{p(x)}\mathfrak u(35\mathfrak d_{\Ai,x}^4 + 84\mathfrak d_{\Ai,x}^5 + 70\mathfrak d_{\Ai,x}^6 + 20\mathfrak d_{\Ai,x}^7) {\mathfrak u}^* \frac{1}{p(x)} 
\\
&+ p(x)(1-4x+10x^2-20x^3)p(x).
\end{align*}
Unlike the previous example, the expanded form of the differential operator is too long to be presented here.
\end{ex}
%%%%%%%%%
\section{Classification of self-adjoint bispectral meromorphic functions}
\label{classif}
In this section we describe a classification of the self-adjoint bispectral meromorphic functions that appear in 
Theorems \ref{exp theorem}, \ref{airy theorem}, and \ref{bessel theorem}. This classification 
is given in terms of (infinite-dimensional) lagrangian versions of Wilson's adelic Grassmannian \cite{W}. 
We use the construction of the latter in terms of bispectral Darboux transformations as in \cite{BHYcmp}.

\subsection{The adelic Grassmannian}
\label{7.1}
%To motivate the construction of the adelic Grassmannian, we will first supply a very elementary construction from the point of view of linear functionals.
By Theorem \ref{rank1}, the class of rank $1$  bispectral meromorphic functions is precisely the class of 
bispectral Darboux transformations of the exponential function
$\psi_{\exp}(x,y) = e^{xy}$, normalized as in Remark \ref{normal-rem}. A function $\wt\psi(x,y)$ in this class has the form
\begin{equation}
\label{polyn-form}
\wt\psi(x,y) = \frac{h(x,y)}{p(x)q(y)}e^{xy}
\end{equation}
for some polynomials $p(x)\in\bbc[x],q(y)\in\bbc[y]$
and a polynomial $h(x,y)\in\bbc[x,y]$. (Note however that not all functions of the form \eqref{polyn-form} 
are bispectral Darboux transformations of $\psi_{\exp}(x,y) = e^{xy}$; 
the polynomials $p(x), q(y), h(x,y)$ need to satisfy some conditions.)
We can recover the bispectral transformation data from this form.
Writing $h(x,y) = \sum_{i=0}^m\sum_{j=0}^n a_{ij}x^iy^j$, we define the operators
$$\mathfrak v_L = \sum_{i=0}^m\sum_{j=0}^n a_{ij}x^i\partial_x^j \in \bisls(\psi_{\exp})
\ \ \text{and}\ \ \mathfrak v_R = \sum_{i=0}^m\sum_{j=0}^n a_{ij}y^j\partial_y^i \in \bisrs(\psi_{\exp}).$$
Then $\mathfrak v_R$ is the Fourier transform $b_{\psi_{\exp}}$
of $\mathfrak v_L$ and by virtue of their definition,
$$
\wt\psi(x,y) = \frac{1}{p(x)q(y)}\mathfrak v_L\cdot \psi_{\exp}(x,y) \ \ \text{and} \ \ \wt\psi(x,y) = \frac{1}{p(x)q(y)}\mathfrak v_R\cdot \psi_{\exp}(x,y).
$$

Let $\mathscr C$ be the span of all linear functionals of the form $\delta^{(k)}(y-a)$ for $k \in \bbn$ and $a\in\bbc$, defined on smooth functions by
$$\langle f(y),\delta^{(k)}(y-a)\rangle = f^{(k)}(a).$$
Note that $\mathscr C$ comes with a natural left action of $\bbc[y]$, defined by
$$f(y)\cdot \delta^{(k)}(y-a) = \sum_{j=0}^k \frac{k!}{j!} \delta^{(j)}(y-a)f^{(k-j)}(a).$$
For a given $\wt\psi(x,y)$, we define the set of linear functionals
$$\mathscr C_L(\wt\psi) = \{\chi\in\mathscr C: \langle e^{xy}h(x,y),\chi(y)\rangle = 0\}.$$
The vector space $\mathscr C_L(\wt\psi)$ is finite dimensional and naturally isomorphic to the kernel of the differential operator $\mathfrak v_L$.
Specifically, for any $\chi\in\mathscr C_L(\wt\psi)$ we have
$$0 = \langle e^{xy}h(x,y),\chi(y)\rangle = \langle \mathfrak v_L\cdot e^{xy},\chi(y)\rangle = \mathfrak v_L\cdot\langle e^{xy},\chi(y)\rangle,$$
so that $\chi\mapsto \langle e^{xy},\chi(y)\rangle$ defines a linear map of $\mathscr C_L(\wt\psi)$ into $\ker(\mathfrak v_L)$.
With a reverse argument one shows that this is an isomorphism.

Conjugation by $\frac{1}{p(x)}\mathfrak v_L$ sends $\bislf(\wt\psi)$ into $\bbc[\partial_x]$.
In fact
$$\bislf(\wt\psi) = \left\lbrace\frac{1}{p(x)}\mathfrak v_Lf(\partial_x)\mathfrak v_L^{-1}p(x): f(\partial_x)\in\bbc[\partial_x],\ \text{and $\mathfrak v_Lf(\partial_x)\mathfrak v_L^{-1}$ 
is a diff. operator}\right\rbrace.$$
From the study of kernels of differential operators, we know that $\mathfrak v_Lf(\partial_x)\mathfrak v_L^{-1}$ is a differential operator if and only if $f(\partial_x)\cdot\ker(\mathfrak v_L)\subseteq\ker(\mathfrak v_L)$.
Each element of the kernel of $\mathfrak v_L$ is of the form $\langle e^{xy},\chi(y)\rangle$ for some $\chi\in \mathscr C_L(\wt\psi)$.
Therefore
$$f(\partial_x)\cdot\langle e^{xy},\chi(y)\rangle = \langle e^{xy},f(y)\cdot\chi(y)\rangle.$$
Thus, if $f(\partial_x)$ preserves the kernel, then 
$$ \langle e^{xy},f(y)\cdot\chi(y)\rangle = \sum_{\lambda\in\mathscr C_L(\wt\psi)} \langle e^{xy},c_\lambda \lambda\rangle,$$
which in turn implies that
$$f(y)\cdot\chi(y) = \sum_{\lambda\in\mathscr C_L(\wt\psi)}c_\lambda\lambda.$$
Thus,
$$\bislf(\wt\psi) = \left\lbrace\frac{1}{p(x)}\mathfrak v_Lf(\partial_x)\mathfrak v_L^{-1}p(x): f(\partial_x)\in\bbc[\partial_x],\ f(y)\cdot \mathscr C_L(\wt\psi)\subseteq\mathscr C_L(\wt\psi)\right\rbrace.$$

For a finite dimensional subspace $C$ of $\mathscr C$, define
$$V_C = \{f(y): \langle f(y),\chi(y)\rangle = 0, \ \forall \chi\in C\}.$$
By a direct argument, one shows that $f(y)\cdot C\subseteq C$ if and only if $f(y)V_C\subseteq V_C$.
This gives the following characterization of $\bislf(\wt\psi)$:
$$\bislf(\wt\psi) = \left\lbrace\frac{1}{p(x)}\mathfrak v_Lf(\partial_x)\mathfrak v_L^{-1}p(x): f(\partial_x)\in\bbc[\partial_x],\ f(y)V_{\mathscr C_L(\wt\psi)}\subseteq V_{\mathscr C_L(\wt\psi)}\right\rbrace.$$
It motivates the following definition of the rational Grassmannian $\Grrat$ of Wilson \cite{W}.
\begin{defn}
We define the {\bf{rational Grassmannian}} $\Grrat$ to be the set of all subspaces $W$ of $\bbc(y)$ of the form
$$W = \frac{1}{q(y)}V_C$$
for some $C\subseteq \mathscr C$ and $q(y)\in\bbc[y]$ with $\dim(C) = \deg(q(y))$.
The subspace $C\subseteq \mathscr C$ is called the {\bf{space of conditions}} of $W$.
\end{defn}
To each point $W\in\Grrat$, we can associate an algebra
$$A_W = \{f(y) \in \bbc [y] : f(y)W\subseteq W\}.$$
The pair $(W,A_W)$ is called a Schur pair.
From the construction above, we have shown that every bispectral meromorphic function $\wt\psi(x,y)$ of rank $1$ corresponds 
to a point $W$ in $\Grrat$ whose associated algebra $A_W$ is isomorphic to $\bislf(\wt\psi)$.
%Note that the value of $q(y)$ does not influence the value of $A_W$ at all.
%However, through a different mechanism known as Krichever correspondence points in $\Grrat$ may be associated 
%to rational algebraic curve $X$ with a torsion-free sheaf $\sheaf L$ of generic rank $1$.
%The choice that $\dim(C) = \deg(q(y))$ is made so as to guarantee that the Euler characteristic of $\sheaf L$ is equal to the arithmetic genus of $C$.
%This in turn allows constructions such as the Sato-Segal-Wilson $\tau$-function to be carried out without modification.
%In particular, when $W$ arises from a bispectral meromorphic function, then the (stationary) Baker-Alkheizer function $\wt\psi_W(x,y)$ of $W$ exists and 
%is equal to $a(x)\wt\psi(x,y)b(y)$ for some rational functions $a(x)$ and $b(y)$.

Not every point of $\Grrat$ corresponds to a bispectral meromorphic function.
%This is because if $\wt\psi(x,y)$ is bispectral of rank $1$, then $\bislf(\wt\psi)$ will correspond to a rational curve whose resolution of singularities is bijective as a map of sets.
%In other words $\bislf(\wt\psi)$ will have no double points, triple points, and so on.
In order for this to be true, the space of conditions $C$ of $W$ must be homogeneous.
\begin{defn}
For any point $c\in\bbc$, let $\mathscr C_c$ denote the subspace of $\mathscr C$ spanned by linear functionals of the form $\delta^{(k)}(x-c)$ for $k\geq 0$ an integer.
Put another way, $\mathscr C_c$ is the subspace of linear functionals of $\mathscr C$ supported at the point $c$.
A linear functional $\chi\in\mathscr C$ is called homogeneous if $\chi\in\mathscr C_c$ for some value $c\in\bbc$.
A finite dimensional subspace $C$ of $\mathscr C$ is called {\bf{homogeneous}} if it is spanned by homogeneous elements.
Equivalently, $C$ is homogeneous if
$$C = \bigoplus_{c\in\bbc} C\cap \mathscr C_c.$$
\end{defn}

It was shown by Wilson that bispectral meromorphic functions give rise to points $W\in\Grrat$ with $\mathscr C_L(\wt\psi)$ homogeneous.
He also proved that the converse is true: a point $W = \frac{1}{q(y)}V_C$ in $\Grrat$ with $C$ homogeneous gives rise to a bispectral meromorphic function $\wt\psi(x,y)$ with $\mathscr C_L(\wt\psi) = C$.
%This correspondence can be made unique if we normalize by using a certain natural choice of $q(y)$.
%This inspires the following definition of the adelic Grassmannian.
\begin{defn}
\label{ad-Gr}
We define the {\bf{adelic Grassmannian}} $\Grad$ to be the subset of $\Grrat$ consisting of the subspaces $W$ of $\bbc(y)$ of the form
$$W = \frac{1}{q(y)}V_C,$$
where $C\subseteq\mathscr C$ is homogeneous and
\begin{equation}
\label{q}
q(y) = \prod_{c\in\bbc} (y-c)^{n(c)}, \ \ n(c) = \dim(C\cap \mathscr C_c).
\end{equation}
\end{defn}
The above correspondence between the points of $\Grad$ and the normalized (as in Remark \ref{normal-rem})
bispectral Darboux transformations $\wt\psi(x,y)$ of $\psi_{\exp}(x,y)$ is a bijection.

\subsection{A classification in rank 1 in terms of fixed points of involutions} 
\label{7.2}
In \cite{W}
Wilson defined two involutions of the adelic Grassmannian $\Grad$: the adjoint and sign involutions. 

The {\bf{adjoint involution}} on $\Grad$ sends a point $W \in \Grad$ to the point $aW\in\Grad$, 
given by
$$aW = \left\lbrace f(y): \frac{1}{2\pi i}\oint_{|y|=1}f(y)g(-y)dz = 0, \ \forall g(y)\in W \right\rbrace,$$
see \cite[Sect. 7]{W} for details. 
Letting $\wt\psi(x,y)$ and $a\wt\psi(x,y)$ be the associated bispectral meromorphic functions, the bispectral algebras of $\wt\psi$ and $a\wt\psi$ 
are related by the formal adjoint
\begin{equation}
\label{Ba}
\bislf(a\wt\psi) = \{\mathfrak d^*: \mathfrak d\in\bislf(\wt\psi)\}, \ \ 
\bisrf(a\wt\psi) = \{\mathfrak d^*: \mathfrak d\in\bisrf(\wt\psi)\},
\end{equation}
see \cite[Corollary 7.7]{W}. If $\wt\psi(x,y)$ is given as a bispectral Darboux transformation of $\psi_{\exp}(x,y)$ as
\begin{equation}
\label{transf}
\wt\psi(x,y) = \frac{1}{q(y)p(x)}\mathfrak{u} \cdot\psi_{\exp}(x,y) \ \
\text{and} \ \
\psi_{\exp}(x,y) = \wt{\mathfrak{u}} \cdot \frac{1}{\wt{q}(y)\wt{p}(x)} \wt \psi(x,y),
\end{equation}
then $a \wt\psi(x,y)$ is given by
\begin{equation}
\label{apsi}
a \wt\psi(x,y) = \frac{1}{\wt{q}(-y)\wt{p}(x)} \wt{\mathfrak{u}}^* \cdot \psi_{\exp}(x,y) \ \
\text{and} \ \
\psi_{\exp}(x,y) = \mathfrak{u}^* \frac{1}{q(-y)p(x)} \cdot  a \wt \psi(x,y),
\end{equation}
see \cite[Proposition 1.7(i)]{BHYcmp}.

The {\bf{sign involution}} $s$ of $\Grad$, defined in \cite[Sect. 8]{W}, is given by
\[
W \mapsto s W := \left\lbrace f(-y): f(y) \in W\right\rbrace.
\]
On the level of bispectral functions, the sign involution is given by
\begin{equation}
\label{sign}
s \wt\psi (x,y):= \wt\psi (-x, -y).
\end{equation}
The associated bispectral algebras are related by 
\begin{equation}
\label{Bs}
\bislf(s \wt\psi) = \sigma \bislf(\wt\psi), 
\end{equation}
where $\sigma$ is the automorhism of the algebra of differential operators with rational coefficients
\begin{equation}
\label{sigma}
\sigma(x) = - x, \sigma(\partial_x) = - \partial_x.
\end{equation}
%The Baker-Alkheizer functions of $W$ and $aW$, respectively, are also related to each other in an easily expressible way.
%Specifically, if
%$$\wt\psi(x,y) = e^{xy}\left(1 + \sum_{i=1}^\infty a_i(x)y^{-i}\right)\ \ \text{and}\ \ a\wt\psi(x,y) = e^{xy}\left(1 + \sum_{i=1}^\infty b_i(x)y^{-i}\right)$$
%then the $a_i(x)$ and $b_j(x)$ are related via the expression in pseudo-differential operators
%$$\left(1 + \sum_{i=1}^\infty b_i(x)\partial_x^{-i}\right)^{-1} = \left(1 + \sum_{i=1}^\infty a_i(x)\partial_x^{-i}\right)^*.$$
%Thus in particular the bispectral meromorphic functions which are fixed under the adjoint are precisely those defined by unitary pseudo-differential operators.
%In fact we have the following theorem which characterizes self-adjoint bispectral meromorphic functions as the Baker-Akhiezer functions 
%of the $a$-invariant points of the adelic Grassmannian.
\begin{thm}
\label{fixed}
Let $\wt\psi(x,y)$ be a bispectral meromorphic function of rank $1$.
Then the following are equivalent:
\begin{enumerate}%[(a)]
\item  $\wt\psi(x,y)$ is self-adjoint.
\item  The algebra $\bislf(\wt\psi)$ is closed under the formal adjoint $*$ and the transformation $\sigma$.
\item The plane $W \in \Grad$ corresponding to $\wt\psi(x,y)$ is invariant under the adjoint $a$ and sign $s$ involutions of $\Grad$.
\end{enumerate}
\end{thm}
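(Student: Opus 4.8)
The plan is to establish the cycle of implications $(1)\Rightarrow(2)\Rightarrow(3)\Rightarrow(1)$, combining the intrinsic Fourier-algebraic criterion of Remark~\ref{self-adjo}, the automorphism machinery of \S\ref{subFour}, and Wilson's formulas \eqref{Ba}, \eqref{Bs}, \eqref{sign}, \eqref{apsi} together with the bijection between $\Grad$ and the normalized rank $1$ bispectral functions.

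For $(1)\Rightarrow(2)$: Remark~\ref{self-adjo} already gives that $\bislf(\wt\psi)$ is closed under $*$ (as is $\bisrf(\wt\psi)$), which is one half of $(2)$. For closure under $\sigma$, note that by Theorem~\ref{rank1} and the corollary following Theorem~\ref{self-adj} a self-adjoint rank $1$ bispectral function is in fact a self-adjoint bispectral Darboux transformation of $\psi_{\exp}$. Since $\iota_{\psi_{\exp},x}$ is the restriction of the automorphism $\sigma$ of \eqref{sigma}, Proposition~\ref{iota prop} yields $\iota_{\wt\psi,x}=\sigma$ on all of $\bisl(\wt\psi)$. Restricting to $\bislf(\wt\psi)$: for $\mathfrak d\in\bislf(\wt\psi)$ the operator $b_{\wt\psi}(\mathfrak d^*)$ is multiplication by a polynomial in $y$ (because $\mathfrak d^*\in\bislf(\wt\psi)$ by $*$-closedness), hence is fixed by $*$, so $\iota_{\wt\psi,x}(\mathfrak d)=b_{\wt\psi}^{-1}\bigl(b_{\wt\psi}(\mathfrak d^*)^*\bigr)=\mathfrak d^*$. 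Thus $\sigma(\mathfrak d)=\mathfrak d^*\in\bislf(\wt\psi)$, i.e.\ $\bislf(\wt\psi)$ is closed under $\sigma$.

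For $(3)\Rightarrow(1)$: since the correspondence $W\leftrightarrow\wt\psi$ intertwines the involution $a$ with the passage to $a\wt\psi$, the hypothesis $aW=W$ means $a\wt\psi=\wt\psi$; then \eqref{Ba} gives $\bislf(\wt\psi)=\bislf(\wt\psi)^*$ and $\bisrf(\wt\psi)=\bisrf(\wt\psi)^*$, so both Fourier algebras are $*$-closed and $\wt\psi$ is self-adjoint by Remark~\ref{self-adjo}. (Only the $a$-invariance is used here; the $s$-invariance is recovered through $(1)\Rightarrow(2)\Rightarrow(3)$.) For $(2)\Rightarrow(3)$: by \eqref{Bs}, $\sigma$-closedness of $\bislf(\wt\psi)$ gives $\bislf(s\wt\psi)=\bislf(\wt\psi)$, and by \eqref{Ba}, $*$-closedness gives $\bislf(a\wt\psi)=\bislf(\wt\psi)$. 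It then remains to promote these equalities of bispectral algebras to $s\wt\psi=\wt\psi$ and $a\wt\psi=\wt\psi$, i.e.\ $sW=W$ and $aW=W$. For this I would invoke the rigidity of the rank $1$ case: $\bislf(\wt\psi)$ is maximally commutative and its joint eigenspaces (in the first variable) are one-dimensional, so a common eigenfunction of $\bislf(\wt\psi)$ is determined by its eigenvalue datum and the normalization of Remark~\ref{normal-rem}; the explicit expressions \eqref{sign} and \eqref{apsi} present $s\wt\psi$ and $a\wt\psi$ again as normalized bispectral Darboux transformations of $\psi_{\exp}$ whose spectral data, once $\bislf(\wt\psi)$ and $\bisrf(\wt\psi)$ are $*$- and $\sigma$-stable, coincide with those of $\wt\psi$, forcing equality.

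The step I expect to be the main obstacle is this last matching in $(2)\Rightarrow(3)$: one must track how the eigenvalue functions $g_{\mathfrak d}(y)$ attached to operators $\mathfrak d\in\bislf(\wt\psi)$ transform under $\sigma$ and under the adjoint involution (which also flips $y\mapsto-y$, cf.\ \eqref{apsi}) and verify that after the normalization the transformed Baker--Akhiezer function is literally $\wt\psi$ again. Closely related is the sub-issue of transferring the $*$-symmetry from $\bislf(\wt\psi)$ to $\bisrf(\wt\psi)$, which in rank $1$ should follow from maximal commutativity of both algebras together with Wilson's description of the right Fourier algebra, but requires care. All remaining verifications --- that $\iota_{\psi_{\exp},x}=\sigma$, that the hypotheses of Proposition~\ref{iota prop} hold for $\psi=\psi_{\exp}$, that $b_{\wt\psi}$ sends $\bislf(\wt\psi)$ into multiplication operators --- are routine given the results assembled in \S\S\ref{bisp-symm}--\ref{subFour}.
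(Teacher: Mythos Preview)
Your cycle $(1)\Rightarrow(2)\Rightarrow(3)\Rightarrow(1)$ is correct, but the route differs from the paper's. The paper instead proves $(1)\Leftrightarrow(3)$ directly by computing $a\wt\psi$ and $s\wt\psi$ from the self-adjoint Darboux data via \eqref{apsi} and \eqref{sign} (using $\iota_{\psi_{\exp},x}=\sigma$), and for $(3)\Rightarrow(1)$ reads off $q(-y)=q(y)$ from the adelic description \eqref{q} and then invokes Wilson's bispectral involution to get $p(-x)=p(x)$ before reversing the computation. The paper then dispatches $(2)\Leftrightarrow(3)$ in one line from \eqref{Ba} and \eqref{Bs}. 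Your arguments for $(1)\Rightarrow(2)$ (via Proposition~\ref{iota prop} and the observation that $\iota_{\wt\psi,x}$ restricts to $*$ on $\bislf(\wt\psi)$) and for $(3)\Rightarrow(1)$ (via \eqref{Ba} and Remark~\ref{self-adjo}, using only $a$-invariance) are cleaner than what the paper does and avoid the detour through the explicit polynomials $p,q$.

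On your flagged obstacle in $(2)\Rightarrow(3)$: the worry is legitimate, since \eqref{Ba} and \eqref{Bs} a priori only give $\bislf(a\wt\psi)=\bislf(\wt\psi)$ and $\bislf(s\wt\psi)=\bislf(\wt\psi)$, and one must then conclude $a\wt\psi=\wt\psi$, $s\wt\psi=\wt\psi$. The missing observation --- which the paper also leaves implicit --- is that in rank $1$ the assignment $W\mapsto\bislf(\wt\psi_W)$ (as a \emph{concrete} subalgebra of differential operators) is injective. Indeed, for a monic $\mathfrak d=\partial_x^n+a_{n-1}(x)\partial_x^{n-1}+\cdots\in\bislf(\wt\psi)$, the eigenvalue polynomial $g(y)=b_{\wt\psi}(\mathfrak d)$ is recovered intrinsically from $\mathfrak d$ as its constant-coefficient limit at $x=\infty$ (using the normalization $\wt\psi=e^{xy}(1+O(1/x))$); hence two normalized rank $1$ bispectral functions with the same $\bislf$ have identical eigenvalue maps, lie in the same one-dimensional joint eigenspace for each $y$, and by normalization coincide. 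With this injectivity in hand your rigidity sketch goes through without further tracking of spectral data, and the transfer of $*$-closure from $\bislf$ to $\bisrf$ is not needed for this step.
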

\begin{proof}
$(1) \Rightarrow (3)$ Let $\wt \psi(x,y)$ be a self-adjoint bispectral Darboux transformation of $\psi_{\exp}(x,y)$ with the notation of 
Definition \ref{bisp-Darb} and the normalization of Remark \ref{normal-rem}. By \eqref {apsi}, 
\[
a \wt\psi(x,y) = \frac{1}{\wt{q}(-y)\wt{p}(x)} \wt{\mathfrak{u}}^* \cdot \psi(x,y) = 
\frac{1}{q(y)\wt{p}(x)} \mathfrak{u} \cdot \psi(x,y) = \wt\psi(x,y).
\]
Since
\[
\iota_{\psi_{\exp}, x} = \sigma|_{\bisl(\psi_{\exp})}
\]
(see \S \ref{subFour}), 
it follows from \eqref{sign} that
\[
s \wt\psi(x,y) = \frac{1}{q(-y)p(-x)}(\sigma(\mathfrak{u})) \cdot\psi(x,y) = \frac{1}{q(y)p(x)}\mathfrak{u} \cdot\psi(x,y) = \wt\psi(x,y).
\]
Here the equalities $q(-y) = q(y)$ and $p(-x) = p(x)$ follow from the facts that $q(y)$ and $p(x)$ are the leading coefficients 
of the differential operators $\mathfrak u$ and $b_{\psi_{\exp}}( \mathfrak u )$ (see Remark \ref{normal-rem}) and the 
facts that these operators are fixed under $\iota_{\psi_{\exp}, x}$ and $\iota_{\psi_{\exp}, y}$, respectively.

$(3) \Rightarrow (1)$  If the plane $W \in \Grad$ corresponding to $\wt\psi(x,y)$ 
is invariant under the adjoint and sign involutions of $\Grad$, then 
it follows from \eqref{q} that $q(-y) = q(y)$. Applying Wilson's bispectral involution $b$ of $\Grad$ \cite[Sect. 8]{W} to $\wt\psi$, 
and using that $bs = sb$ and $ab = bsa$ \cite[Sect. 8]{W},
gives that $p(-x) = p(x)$.  
The rest of this implication is proved with a reverse argument to the one used for the previous implication.

The equivalence $(2) \Leftrightarrow (3)$ follows from \eqref{Ba} and \eqref{Bs}.
\end{proof}

\subsection{The Lagrangian adelic Grassmannian}
\label{7.3}
In this subsection we provide a geometric classification of the self-adjoint bispectralfunctions of rank 1.
Specifically, we relate the points of $\Grad$ fixed by the adjoint $a$ and sign $s$ involutions
to certain Lagrangian subspaces of kernels of formally symmetric differential operators with constant coefficients.
Recall the definition of the bilinear concomitant $\mathcal{C}_\mathfrak d$ of a differential operator $\mathfrak d$ (Definition \ref{conco}).

First we prove a general classification result for the factorizations of a formally symmetric differential operator $\mathfrak d$ 
(i.e., $\mathfrak d= \mathfrak d^*$) with analytic coefficients into a 
product $\mathfrak u^* \mathfrak u$ for differential operators $\mathfrak u$ with analytic coefficients.

Start with a (not necessarily formally symmetric) differential operator $\mathfrak d$ with analytic coefficients on an open subset of $\bbc$.
Choose a sufficiently small connected open subset $\mathcal{O}$ of $\bbc$ such that the kernels $\ker \mathfrak d$ and $\ker \mathfrak d^*$ in 
the space of holomorphic functions on $\mathcal{O}$ 
have dimension equal to the order of $\mathfrak d$.
Note from \eqref{integral biconcomitant} that if $f(x)\in\ker(\mathfrak d)$ and $g(x)\in\ker(\mathfrak d^*)$, then for all pairs of points $x_0,x_1$ we have 
$\mathcal{C}_{\mathfrak d}(f,g;x_1)=\mathcal{C}_{\mathfrak d}(f,g;x_0)$.
Thus the bilinear concomitant $\mathcal{C}_{\mathfrak d}(\cdot,\cdot;p)$ restricts to a pairing
$$\mathcal{C}_{\mathfrak d}(\cdot,\cdot): \ker(\mathfrak d)\times\ker(\mathfrak d^*)\rightarrow\bbc$$
which is independent of the value of $p$, where $m = \ord \mathfrak d$.
Let $f_1(x),\dots, f_m(x)$ be a basis for the kernel of $\mathfrak d$ and $g_1(x),\dots, g_m(z)$ be a basis for the kernel of $\mathfrak d^*$. Consider the matrix
$$M_{\mathfrak d} := \text{Wr}[f_1(x),\dots, f_m(x)]^T C_{\mathfrak d}(x)\text{Wr}[g_1(x),\dots, g_m(x)]$$
where $\text{Wr}[\cdot]$ denotes the Wronskian matrix and $C_{\mathfrak d}(x)$ is the matrix defined in \eqref{C-matr}.
Then $M_{\mathfrak d}$ is a constant matrix representing the pairing with respect to the chosen bases.
If $M_{\mathfrak d}$ is singular, then there exists a constant vector $\vec c$ with $\vec c^T M_{\mathfrak d}= 0$.
Note that since the Wronskian is nonsingular, the matrix $M_{\mathfrak d}$ is singular if and only if there exists an element 
$f(x)\in\ker(\mathfrak d)$ with $\mathcal{C}_{\mathfrak d}(f,g) = 0$ for all $g$.  
Using the integral formula in \eqref{integral biconcomitant}, this would in fact imply  $\int_{x_0}^{x_1} f(x)(\mathfrak d^*\cdot g(x))dx$ for all $g(x)$.
That is, $f(x)$ would have to lie orthogonal to the closure of the image of $\mathfrak d^*$ on the Hilbert space of square integrable function of a suitably chosen path from $x_0$ to $x_1$.
However, the image of $\mathfrak d^*$ will be dense, so this in turn implies that $f(x)$ is zero.
Hence $\mathcal{C}_{\mathfrak d}( \cdot, \cdot)$ is nonsingular and defines a non-degenerate pairing.

The bilinear concomitant of $\mathfrak d^*$ is related to that of $\mathfrak d$ by
$$\mathcal{B}_{\mathfrak d^*}(f,g;p) = - \mathcal{B}_{\mathfrak d}(g,f;p).$$
This in particular, when $\mathfrak d$ is formally symmetric the bilinear concomitant defines a symplectic form on $\ker \mathfrak d$.

Now suppose that $\mathfrak d=\mathfrak d^*$.
Recall that factorizations of $\mathfrak d$ correspond to choices of subspaces of $\ker \mathfrak d$.
Specifically given a subspace $V\subseteq \ker \mathfrak d$, there exists a differential operator $\mathfrak b$ 
(unique up to multiplication by a function on the left) with $\ker \mathfrak b = V$ and $\mathfrak d= \mathfrak a\mathfrak b$ for some differential operator $\mathfrak a$.
Since $\mathfrak d = \mathfrak d^*$, this also implies that $\mathfrak d = \mathfrak b^*\mathfrak a^*$ so that $\mathfrak a^*$ also corresponds to 
a certain subspace of $\ker \mathfrak d$, which turns out to be exactly the orthogonal subspace $V^\perp$ of $V$ under the symplectic form defined by the bilinear concomitant.
In particular, in the special case of a factorization of the form $\mathfrak d= \mathfrak b^*\mathfrak b$, the subspace $V$ of $\mathfrak b$ satisfies $V^\perp = V$, 
i.e., is a Lagrangian subspace of $\ker(\mathfrak d)$.
This is the content of the next theorem.
\begin{thm}
\label{Lagr}
Let $\mathfrak d$ be a monic formally symmetric operator on an open subset $\mathcal{O}$ of $\bbc$. Choose $\mathcal{O}$ sufficiently small so that the kernel 
$\ker \mathfrak d$ in the space of holomorphic functions on $\mathcal{O}$ has dimension equal to the order of $\mathfrak d$. Let $a,b\in \mathcal{O}$ with $a\neq b$.

If $\mathfrak d$ has a factorization $\mathfrak d= \mathfrak a\mathfrak b$ into differential operators $\mathfrak a, \mathfrak b$ with analytic coefficients on $\mathcal{O}$,
then $\ker \mathfrak b $ and $\ker \mathfrak a^*$ are complementary subspaces of 
$\ker \mathfrak d $ relative to the symplectic form $\mathcal{C}_{\mathfrak d}$.
Furthermore, given a subspace $V\subseteq \ker \mathfrak d $ there exist monic differential operators 
$\mathfrak a,\mathfrak b$ satisfying $\ker \mathfrak a^*  = V^\perp$ and $\ker \mathfrak b = V$.

In particular, factorizations of the form $\mathfrak d = \mathfrak b^*\mathfrak b$ for monic  differential operators $\mathfrak b$ with analytic coefficients on $\mathcal{O}$
are in bijective correspondence with Lagrangian subspaces of $\ker \mathfrak d$.
\end{thm}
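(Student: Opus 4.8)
The plan is to reduce everything to a composition (Green--Lagrange) identity for the bilinear concomitant: whenever $\mathfrak d = \mathfrak a\mathfrak b$, then for all sufficiently smooth $f,g$ and all $p$,
$$\mathcal C_{\mathfrak d}(f,g;p) \;=\; \mathcal C_{\mathfrak a}(\mathfrak b\cdot f,\, g;\, p) \;+\; \mathcal C_{\mathfrak b}(f,\, \mathfrak a^{*}\cdot g;\, p).$$
This is obtained by integrating $(\mathfrak d\cdot f)\,g$ by parts, first moving $\mathfrak a$ across and then $\mathfrak b$, and using $\mathfrak d^{*} = \mathfrak b^{*}\mathfrak a^{*}$ together with \eqref{integral biconcomitant} to collect the boundary terms; the explicit formula of Definition \ref{conco} is precisely the antiderivative for which this holds with no additive constant, as a direct check confirms. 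As preparation I would also shrink $\mathcal O$, if necessary, so that in addition to $\ker\mathfrak d$ having dimension $\ord\mathfrak d$ the leading coefficients of $\mathfrak a$ and $\mathfrak b$ are nonvanishing on $\mathcal O$; this is possible because those leading coefficients multiply to the leading coefficient $1$ of the monic operator $\mathfrak d$. Then $\dim\ker\mathfrak b = \ord\mathfrak b$, $\dim\ker\mathfrak a^{*} = \ord\mathfrak a^{*} = \ord\mathfrak a$, and $\ord\mathfrak a + \ord\mathfrak b = \ord\mathfrak d = \dim\ker\mathfrak d$.

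\emph{Complementarity.} Given a factorization $\mathfrak d = \mathfrak a\mathfrak b$ with analytic coefficients on $\mathcal O$, one has $\ker\mathfrak b\subseteq\ker\mathfrak d$ at once, and $\ker\mathfrak a^{*}\subseteq\ker\mathfrak d^{*} = \ker\mathfrak d$ because $\mathfrak d^{*} = \mathfrak b^{*}\mathfrak a^{*}$. For $f\in\ker\mathfrak b$ and $g\in\ker\mathfrak a^{*}$ the two terms on the right of the composition identity both vanish, so $\mathcal C_{\mathfrak d}(f,g;p) = 0$; hence $\ker\mathfrak a^{*}\subseteq(\ker\mathfrak b)^{\perp}$ with respect to the symplectic form $\mathcal C_{\mathfrak d}$ on $\ker\mathfrak d$. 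Since that form is nondegenerate (as established just before the theorem), $\dim(\ker\mathfrak b)^{\perp} = \dim\ker\mathfrak d - \dim\ker\mathfrak b = \ord\mathfrak a = \dim\ker\mathfrak a^{*}$, and therefore $\ker\mathfrak a^{*} = (\ker\mathfrak b)^{\perp}$ --- the sense in which $\ker\mathfrak b$ and $\ker\mathfrak a^{*}$ are complementary inside $(\ker\mathfrak d,\mathcal C_{\mathfrak d})$.

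\emph{Realizing a prescribed subspace.} Given $V\subseteq\ker\mathfrak d$ of dimension $k$ with basis $v_{1},\dots,v_{k}$, let $\mathfrak b$ be the monic differential operator of order $k$ with analytic coefficients on $\mathcal O$ (shrinking again if needed) whose kernel is $V$ --- explicitly the normalized Wronskian operator built from $v_{1},\dots,v_{k}$, with leading coefficient $1$. Right-dividing the monic operator $\mathfrak d$ by the monic operator $\mathfrak b$ gives $\mathfrak d = \mathfrak a\mathfrak b + \mathfrak r$ with $\mathfrak a,\mathfrak r$ of analytic coefficients and $\ord\mathfrak r < k$; evaluating on any $f\in V = \ker\mathfrak b\subseteq\ker\mathfrak d$ yields $\mathfrak r\cdot f = 0$, and a nonzero operator of order $<k$ cannot vanish on a $k$-dimensional space of holomorphic functions over the connected domain $\mathcal O$, so $\mathfrak r = 0$ and $\mathfrak a$ is monic of order $\ord\mathfrak d - k$. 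The complementarity step then gives $\ker\mathfrak a^{*} = V^{\perp}$, which is the second assertion.

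\emph{Factorizations $\mathfrak d = \mathfrak b^{*}\mathfrak b$.} Put $n = \tfrac12\ord\mathfrak d$. If $V\subseteq\ker\mathfrak d$ is Lagrangian, choose $\mathfrak b$ monic of order $n$ with $\ker\mathfrak b = V$ and write $\mathfrak d = \mathfrak a\mathfrak b$ with $\mathfrak a$ monic of order $n$ as above; then $\ker\mathfrak a^{*} = V^{\perp} = V = \ker\mathfrak b$, so $\mathfrak a^{*}$ and $\mathfrak b$ are operators of the same order $n$ with the same $n$-dimensional kernel and thus agree up to a left function factor, which by comparing leading coefficients ($1$ for $\mathfrak b$, $(-1)^{n}$ for $\mathfrak a^{*}$) equals the constant $(-1)^{n}$; hence $\mathfrak a = (-1)^{n}\mathfrak b^{*}$ and $\mathfrak d = \mathfrak b^{*}\mathfrak b$ (with sign $+$ under the normalization used throughout this paper, where $n$ is even; in general one records $\mathfrak d = \pm\mathfrak b^{*}\mathfrak b$). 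Conversely a factorization $\mathfrak d = \mathfrak b^{*}\mathfrak b$ displays $\mathfrak b$ as a monic right factor with $\mathfrak a = \mathfrak b^{*}$, so by the complementarity step $(\ker\mathfrak b)^{\perp} = \ker\mathfrak b^{**} = \ker\mathfrak b$, i.e.\ $\ker\mathfrak b$ is Lagrangian; the assignments $V\mapsto\mathfrak b$ and $\mathfrak b\mapsto\ker\mathfrak b$ are mutually inverse because the monic operator of order $\dim V$ with kernel $V$ is unique, giving the asserted bijection. I expect the main technical obstacle to be the clean statement and verification of the concomitant composition identity (with the additive constant genuinely absent) and the domain-shrinking bookkeeping that makes all the kernel dimensions come out right; once those are secured, the symplectic-linear-algebra content is essentially immediate.
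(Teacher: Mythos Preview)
Your proof is correct and follows essentially the same route as the paper: the same concomitant composition identity $\mathcal C_{\mathfrak d}(f,g;p) = \mathcal C_{\mathfrak a}(\mathfrak b\cdot f,g;p) + \mathcal C_{\mathfrak b}(f,\mathfrak a^{*}\cdot g;p)$ (which the paper cites to Wilson), the same dimension count to upgrade $\ker\mathfrak a^{*}\subseteq(\ker\mathfrak b)^{\perp}$ to equality, and the same kernel-to-operator construction for the converse. Your treatment of the Lagrangian bijection is slightly sharper than the paper's: where the paper writes $\mathfrak a^{*} = h(x)\mathfrak b$ for an unspecified function and then takes $\mathfrak c = \pm\sqrt{h(x)}\,\mathfrak b$, you observe directly that since both $\mathfrak a$ and $\mathfrak b$ are monic the factor is the constant $(-1)^{n}$, and you correctly flag the resulting sign in $\mathfrak d = (-1)^{n}\mathfrak b^{*}\mathfrak b$ --- a point the paper's version elides.
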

\begin{proof}
Suppose that $\mathfrak d$ is an operator of order $m$ and has a factorization of the form $\mathfrak d= \mathfrak a\mathfrak b$.
If $\ord \mathfrak b = r$, then $\ord \mathfrak a = m-r$.
Since $\mathfrak d= \mathfrak d^*$, we have $\mathfrak d^* = \mathfrak b^*\mathfrak a^*$.
Therefore $\ker \mathfrak a^*$ and $\ker \mathfrak b $ are subspaces of $\ker \mathfrak d $ of dimension $m-r$ and $r$, respectively.
Following Wilson in \cite{Wilson-n}, for all $f,g$ we have that
\begin{equation}\label{biconcomitant decomposition}
\mathcal{C}_{\mathfrak d}(f,g;p) = \mathcal{C}_{\mathfrak a}(\mathfrak b\cdot f,g;p) + \mathcal{C}_{\mathfrak b}(f,\mathfrak a^*\cdot g;p).
\end{equation}
This implies that for all $f\in\ker \mathfrak b$ and $g\in \ker \mathfrak a^*$ we must have $\mathcal{C}_{\mathfrak d}(f,g;p) = 0$.
Hence $\ker \mathfrak a^* \subseteq (\ker \mathfrak b)^\perp$.
Since the dimension of $( \ker \mathfrak b)^\perp$ must be $m-r$, this implies that $\ker \mathfrak a^*  = (\ker \mathfrak b)^\perp$.
This proves the first claim of the theorem.

To prove the second claim, we start with an arbitrary subspace $V\subseteq \ker \mathfrak d$.
Then the usual construction may be used to produce a differential operator $\mathfrak b$ whose kernel is $V$.
Then since the kernel of $\mathfrak b$ is contained in the kernel of $\mathfrak d$, there must exist a differential operator $\mathfrak a$ with $\mathfrak d= \mathfrak a\mathfrak b$.
By the previous argument then $\ker \mathfrak a^* = V^\perp$.
This proves the second claim of the theorem.

In the special case that $\mathfrak d=\mathfrak b^*\mathfrak b$, we must have that $\ker \mathfrak b = (\ker \mathfrak b)^\perp$.
Thus in this case, the kernel of $\mathfrak b$ defines a Lagrangian subspace of $\ker \mathfrak d$.
Conversely, a lagrangian subspace $V\subseteq\ker(\mathfrak d)$ defines a factorization $\mathfrak d = \mathfrak a\mathfrak b$ where $\ker \mathfrak a^* = \ker \mathfrak b$.
This implies that $\mathfrak a^* = h(x)\mathfrak b$ for some function $h(x)$, and therefore that $\mathfrak d= \mathfrak b^*h(x)\mathfrak b$.
Setting $\mathfrak c= \pm \sqrt{h(x)}\mathfrak b$, this gives us a factorization $\mathfrak d=\mathfrak c^*\mathfrak c$ for a monic differential operator $\mathfrak c$.
\end{proof}

By the discussion in \S \ref{7.1}, the bispectral functions in the adelic Grassmannian are obtained as follows. Start with a differential 
operator with constant coefficients
\[
\mathfrak d = \prod_j ( \partial_x - c_j)^{m_j}.
\]
Its kernel in the space of entire functions consists of the quasipolynomials 
\[
\ker \mathfrak d = \big\{ \sum_j p_j(x) e^{ c_j x} \mid \deg p_j(x) \leq n_j -1 \big\}.
\]
Let $V$ be a subspace of $\ker \mathfrak d$ having a basis consisting of quasiexponential functions each of which contains a
single exponent $e^{c_j x}$; this is the adelic condition.
Let $\frac{1}{p(x)}  \mathfrak{u}$ be the unique monic differential operator such that
\[
\ker \mathfrak{u} = V \ \ \mbox{and} \ \
\mathfrak{u} \in \bisl(\psi_{\exp}) \ \mbox{(i.e., $\mathfrak{u}$ has polynomial coefficients).}
\]
Let 
\[
q(y) = \prod ( y - c_j)^{n_j},
\]
where $n_j$ equals the number of basis elements of $V$ whose exponent is $e^{c_j x}$. 
The bispectral functions in the adelic Grassmannnian are the functions of the form
\begin{equation}
\label{psiwt}
\wt\psi(x,y) = \frac{1}{q(y) p(x)} \mathfrak u \cdot e^{xy}.
\end{equation}
The corresponding $W \in \Grad$ (recall Definition \ref{ad-Gr}) is obtained as follows: it corresponds to the unique space of conditions $C\subseteq\mathscr C$ such that
$$
V = C (e^{xy}),
$$
keeping in mind  that the delta function in $C$ act in the variable $y$.

With this in mind, we define the Lagrangian adelic Grassmannian.
\begin{defn} The {\bf{Lagrangian adelic Grassmannian}} is the sub-Grassmannian $\Grad_{\mathrm{Lagr}}$ of $\Grad$ consisting of those points for which 
$\mathfrak d^* = \mathfrak d$ and the corresponding subspaces $V$ as above are 
\begin{enumerate}
\item Lagrangian subspaces of $\ker \mathfrak d$ with respect to the symplectic form $\mathcal{C}_{\mathfrak d}( \cdot, \cdot)$ and 
\item are preserved under the transformations $x \mapsto -x$.
\end{enumerate}
\end{defn}
Combining Theorems \ref{fixed} and \ref{Lagr}, we obtain the following:
\begin{thm}
The self-adjoint bispectral functions of rank $1$ are in bijective correspondence with the points of the Lagrangian adelic Grassmannian
$\Grad_{\mathrm{Lagr}}$.
\end{thm}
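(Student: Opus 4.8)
The plan is to deduce the statement from three results already at our disposal: the bijection between rank $1$ bispectral functions and the points of $\Grad$ set up in \S\ref{7.1} (resting on Theorem \ref{rank1}), the characterization of self-adjointness in terms of the adjoint and sign involutions $a,s$ of $\Grad$ given in Theorem \ref{fixed}, and the bijection between factorizations $\mathfrak d=\mathfrak b^*\mathfrak b$ and Lagrangian subspaces of $\ker\mathfrak d$ provided by Theorem \ref{Lagr}. Since \S\ref{7.1} already gives that $\wt\psi\mapsto W$ is a bijection from the rank $1$ bispectral functions onto $\Grad$, and $\Grad_{\mathrm{Lagr}}$ is by definition a subset of $\Grad$, it suffices to prove that, under this bijection, $\wt\psi$ is self-adjoint exactly when $W\in\Grad_{\mathrm{Lagr}}$; bijectivity of the restriction is then automatic.

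First I would prove the implication ``self-adjoint $\Rightarrow W\in\Grad_{\mathrm{Lagr}}$''. By the corollary following Theorem \ref{self-adj}, a self-adjoint rank $1$ bispectral function $\wt\psi$ is a \emph{self-adjoint} bispectral Darboux transformation of $\psi_{\exp}$; in the normalization of Remark \ref{normal-rem} this means $\wt\psi(x,y)=\frac{1}{q(y)p(x)}\mathfrak u\cdot e^{xy}$ with $\wt{\mathfrak u}=\mathfrak u^*$, $\wt p=p$, $\wt q=q$. Put $\mathfrak b:=\frac{1}{p(x)}\mathfrak u$, which is monic with $\ker\mathfrak b=V=\ker\mathfrak u$; then $\mathfrak b^*=\mathfrak u^*\frac{1}{p(x)}$, so $\mathfrak b^*\mathfrak b=\mathfrak u^*\frac{1}{p(x)^2}\mathfrak u$. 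By \eqref{factor} applied to the self-adjoint data, $\mathfrak u^*\frac{1}{p(x)^2}\mathfrak u\in\bislf(\psi_{\exp})$ and $b_{\psi_{\exp}}$ carries it to $q(y)^2$; since $\bislf(\psi_{\exp})$ consists of constant-coefficient operators, this forces $\mathfrak b^*\mathfrak b=q(\partial_x)^2=:\mathfrak d$, which is formally symmetric (a product $\mathfrak b^*\mathfrak b$ always is). Theorem \ref{Lagr} then shows $V$ is a Lagrangian subspace of $\ker\mathfrak d$ relative to $\mathcal C_{\mathfrak d}$, which is condition (1) in the definition of $\Grad_{\mathrm{Lagr}}$. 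For condition (2): Theorem \ref{fixed} gives in addition $sW=W$, i.e.\ $\wt\psi(-x,-y)=\wt\psi(x,y)$ by \eqref{sign}; expanding $\wt\psi(-x,-y)=\frac{1}{q(-y)p(-x)}\sigma(\mathfrak u)\cdot e^{xy}$ with $\sigma$ the automorphism \eqref{sigma} (using $\iota_{\psi_{\exp},x}=\sigma|_{\bisl(\psi_{\exp})}$ from \S\ref{subFour}) and comparing with the normalized expression forces $\sigma(\mathfrak u)$ to be a scalar multiple of $\mathfrak u$, hence $\sigma(V)=V$. Thus $W\in\Grad_{\mathrm{Lagr}}$.

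For the converse I would start from $W\in\Grad_{\mathrm{Lagr}}$ with data: a formally symmetric constant-coefficient $\mathfrak d$, a Lagrangian and $\sigma$-invariant $V\subseteq\ker\mathfrak d$, and $\wt\psi(x,y)=\frac{1}{q(y)p(x)}\mathfrak u\cdot e^{xy}$ the associated rank $1$ bispectral function, where $\frac{1}{p(x)}\mathfrak u$ is the monic polynomial-coefficient operator with kernel $V$. Theorem \ref{Lagr} yields a monic factorization $\mathfrak d=\mathfrak b^*\mathfrak b$ with $\ker\mathfrak b=V$; since a monic operator is determined by its kernel, $\mathfrak b=\frac{1}{p(x)}\mathfrak u$ and therefore $\mathfrak d=\mathfrak u^*\frac{1}{p(x)^2}\mathfrak u$. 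Because $V$ is Lagrangian, $\mathfrak d$ has order $2\dim V=2\deg q$, and $\mathfrak d$ being constant-coefficient and formally symmetric forces $\mathfrak d=q(\partial_x)^2$ (see the obstacle below). Applying $\mathfrak b^*$ to the identity $\mathfrak b\cdot e^{xy}=q(y)\,\wt\psi$ and using that $q(\partial_x)^2$ acts on $e^{xy}$ as multiplication by $q(y)^2$, one gets $\mathfrak b^*\cdot\wt\psi=q(y)\,e^{xy}$, i.e.\ $\psi_{\exp}=\mathfrak u^*\frac{1}{p(x)q(y)}\cdot\wt\psi$. Hence $(\mathfrak u,\mathfrak u^*)$ exhibits $\wt\psi$ as a self-adjoint bispectral Darboux transformation of $\psi_{\exp}$ — the equalities $\wt p=p$, $\wt q=q$ being arranged by the normalization discussed after Definition \ref{selfadj}, and $\sigma(V)=V$ yielding $sW=W$ consistently — so Proposition \ref{prop-dir1} gives that $\wt\psi$ is self-adjoint.

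The step I expect to be the main obstacle is the bookkeeping in the converse direction: matching the abstract factorization delivered by Theorem \ref{Lagr} with the concrete Fourier-algebra data of \S\ref{7.1}, and in particular pinning $\mathfrak d$ down to be exactly $q(\partial_x)^2$ rather than some other formally symmetric constant-coefficient operator of order $2\deg q$. This requires the elementary but slightly delicate observation that $\mathcal C_{\mathfrak d}$ pairs the $e^{cx}$-isotypic component of $\ker\mathfrak d$ nondegenerately with the $e^{-cx}$-component — these are the only exponent pairs for which the boundary terms of the bilinear concomitant are constant in the evaluation point — so that a Lagrangian $V$ with $n_c$ basis functions of exponent $c$, where $n_c=n_{-c}$ by $\sigma$-invariance, forces the multiplicity of $(\partial_x-c)$ in $\mathfrak d$ to be exactly $2n_c$, whence $\mathfrak d=q(\partial_x)^2$. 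One must also check that the monic factor $\mathfrak b$ from Theorem \ref{Lagr} has rational coefficients and hence coincides with $\frac{1}{p(x)}\mathfrak u$ for $\mathfrak u$ of polynomial coefficients; this uses that $V$ consists of quasi-polynomials together with the adelic hypothesis built into $\Grad$. Tracking the leading-coefficient signs $(-1)^{\ord\mathfrak u}$ in $\mathfrak u\mapsto\mathfrak u^*$ — the reason $\wt p=p$, $\wt q=q$ are imposed rather than derived — is the only other point needing care; all the structural content is supplied by Theorems \ref{rank1}, \ref{fixed}, and \ref{Lagr}.
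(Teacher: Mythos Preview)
Your proposal is correct and follows the same route the paper indicates: the paper's own proof is the single sentence ``Combining Theorems \ref{fixed} and \ref{Lagr}'', and you have supplied precisely that combination with the details filled in. The obstacle you flag in the converse direction --- pinning the abstract $\mathfrak d$ down as $q(\partial_x)^2$ --- is real and your resolution (the concomitant pairs the $e^{cx}$-isotypic component nondegenerately with the $e^{-cx}$-component, so Lagrangianity gives $m_c = n_c + n_{-c}$, and $\sigma$-invariance gives $n_c = n_{-c}$, hence $m_c = 2n_c$) is exactly right; this is the content the paper leaves implicit.
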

This theorem gives an explicit algorithmic way to construct all self-adjoint bispectral functions of rank 1.
\subsection{Classification of the self-adjoint tranformations in the Airy and Bessel cases} 
In this subsection we describe extensions of the results in \S \ref{7.2}-\ref{7.3} to the rank 2 Airy and Bessel cases.

The Sato's Grassmannian classifies the 
solutions of the KP hierarchy. We refer the reader to \cite{vM} for details. The adelic Grassmannian is naturally embedded in it.
In \cite[Sect. 2--4]{BHYcmp} it was proved that the families of (normalized) bispectral Darboux transformations of the Bessel functions $\psi_{\Be(\nu)}$ 
and Airy functions $\psi_{\Ai}$ are canonically embedded in it, forming sub-Grassmannians which we will denote by $\Gr^{\mathrm{ad}, \Be(\nu)}$ and  
$\Gr^{\mathrm{ad}, \Ai}$, respectively. (The Grassmannians $\Gr^{\mathrm{ad}, \Be(\nu)}$ and $\Gr^{\mathrm{ad}, \Ai}$ are disjoint from $\Grad$
which is also naturally embedded in the Sato's Grassmannian.)

In \cite[\S 1.4]{BHYcmp} Wilson's adjoint $a$ and sign $s$ involutions were extended to Sato's Grassamannian and it was shown that 
\eqref{Ba}--\eqref{Bs} are satisfied whenever $\wt \psi(x,y)$ and $\psi_{\exp}(x,y)$ are replaced with any pair of wave functions 
of the KP hierarchy that satisfy the transformation property \eqref{transf}. Moreover, in \cite[Sections 2 and 4]{BHYcmp}
it was proved that $a$ preserves $\Gr^{\mathrm{ad}, \Be(\nu)}$ and  $\Gr^{\mathrm{ad}, \Ai}$, 
while $s$ preserves $\Gr^{\mathrm{ad}, \Be(\nu)}$. 
Similarly to Theorem \ref{as-fixed}, using the fact that $\iota_{ \psi_{\Ai} , x} = \id$ and $\iota_{ \psi_{\Be(\nu)}, x} = \sigma$ (cf. \S \ref{subFour} and eq. \eqref{sigma}), 
one proves the following:
\begin{thm}
\label{as-fixed2}
\label{as-fixed} (a) Let $\wt\psi(x,y)$ be a (normalized) bispectral Darboux transformation from the Airy bispectral 
function $\psi_{\Ai}(x,y)$. Then the following are equivalent:
\begin{enumerate}%[(a)]
\item  $\wt\psi(x,y)$ is self-adjoint.
\item  The algebra $\bislf(\wt\psi)$ is closed under the formal adjoint $*$.
\item The plane $W \in \Gr^{\mathrm{ad}, \Ai}$ corresponding to $\wt\psi(x,y)$ is invariant under the adjoint involution $a$ of $\Gr^{\mathrm{ad}, \Ai}$.
\end{enumerate}

(b) Let $\wt\psi(x,y)$ be a (normalized) bispectral Darboux transformation from the Bessel bispectral 
function $\psi_{\Be(\nu)}(x,y)$ for $\nu \in \bbc \backslash \bbz$. Then the following are equivalent:
\begin{enumerate}%[(a)]
\item  $\wt\psi(x,y)$ is self-adjoint.
\item  The algebra $\bislf(\wt\psi)$ is closed under the formal adjoint $*$ and the transformation $\sigma$ from \eqref{sigma}.
\item The plane $W \in \Gr^{\mathrm{ad}, \Be(\nu)}$ corresponding to $\wt\psi(x,y)$ is invariant under the adjoint $a$ and sign $s$ involutions of $\Gr^{\mathrm{ad}, \Be(\nu)}$.
\end{enumerate}
\end{thm}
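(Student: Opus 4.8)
The plan is to transcribe the proof of Theorem \ref{fixed} almost verbatim, with the adelic Grassmannian $\Grad$ replaced by the sub-Grassmannians $\Gr^{\mathrm{ad}, \Ai}$ and $\Gr^{\mathrm{ad}, \Be(\nu)}$ of Sato's Grassmannian. Three inputs make this possible. First, by \cite[\S 1.4]{BHYcmp} Wilson's adjoint $a$ and sign $s$ involutions extend to Sato's Grassmannian with the relations \eqref{Ba}--\eqref{Bs} holding for any pair of KP wave functions related by a transformation of the form \eqref{transf}; moreover $a$ preserves both $\Gr^{\mathrm{ad}, \Ai}$ and $\Gr^{\mathrm{ad}, \Be(\nu)}$ and $s$ preserves $\Gr^{\mathrm{ad}, \Be(\nu)}$ \cite[Sections 2 and 4]{BHYcmp}. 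Second, $\iota_{\psi_{\Ai}, x} = \id$ while $\iota_{\psi_{\Be(\nu)}, x} = \sigma$ (see \S \ref{subFour}); this is exactly why no sign involution enters the Airy statement but both $a$ and $s$ enter the Bessel statement, in parallel with the rank $1$ dichotomy. Third, by Proposition \ref{prop-dir1}, Theorem \ref{self-adj} and the corollary following it — whose hypotheses hold for $\psi_{\Ai}$ and $\psi_{\Be(\nu)}$ by the remark after Theorem \ref{self-adj} — a self-adjoint $\wt\psi$ in either class is automatically obtained from a \emph{self-adjoint} bispectral Darboux transformation, so in Definition \ref{bisp-Darb} one has $\wt{\mathfrak u} = \mathfrak u^*$, $b_\psi(\wt{\mathfrak u}) = b_\psi(\mathfrak u)^*$, $\wt p = p$, $\wt q = q$.

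With these in hand, for $(1) \Rightarrow (3)$ I would write down the $\Gr^{\mathrm{ad}, \Ai}$- and $\Gr^{\mathrm{ad}, \Be(\nu)}$-analogs of formulas \eqref{sign} and \eqref{apsi} expressing $s\wt\psi$ and $a\wt\psi$ through the Darboux data, substitute the self-adjointness relations, and apply $\iota_{\psi, x} = \id$ (Airy) or $\sigma$ (Bessel) to collapse both expressions to $\wt\psi$ itself, so $W$ is fixed; in the Bessel case the equalities $q(-y) = q(y)$ and $p(-x) = p(x)$ that this requires follow, as in the rank $1$ proof, since $p$ and $q$ are the leading coefficients of $\mathfrak u$ and $b_\psi(\mathfrak u)$ (Remark \ref{normal-rem}) and those operators are $\iota$-invariant. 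The equivalence $(2) \Leftrightarrow (3)$ is then immediate from the analogs of \eqref{Ba}--\eqref{Bs}: $a$-invariance of $W$ is equivalent to $\bislf(\wt\psi)$ being $*$-closed, and (Bessel only) $s$-invariance is equivalent to $\sigma$-closure.

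The one step where I expect real friction is $(3) \Rightarrow (1)$ in the Bessel case. There $s$-invariance of $W$ yields $q(-y) = q(y)$ via the analog of \eqref{q} on $\Gr^{\mathrm{ad}, \Be(\nu)}$, but extracting the dual equality $p(-x) = p(x)$ requires a bispectral involution $b$ on $\Gr^{\mathrm{ad}, \Be(\nu)}$ satisfying $bs = sb$ and $ab = bsa$, i.e. the self-duality of the Bessel Grassmannian from \cite[Section 2]{BHYcmp}; once that is granted the argument reverses as before, and the Airy case, having no sign flips, is shorter. An alternative I would keep in reserve, which avoids the bispectral-involution bookkeeping entirely, is to close the cycle $(1) \Rightarrow (3) \Leftrightarrow (2) \Rightarrow (1)$ directly: $\bislf(\wt\psi)$ is maximally commutative (by the lemma preceding Theorem \ref{self-adj}, since $\bislf(\psi_{\Ai})$ and $\bislf(\psi_{\Be(\nu)})$ are maximally commutative and Darboux transformations preserve this property), so if it is $*$-closed then for any nonconstant $\mathfrak d$ in it the element $\mathfrak d + \mathfrak d^*$ is again nonconstant — every bispectral operator has even order in rank $2$ — formally symmetric, and in $\bislf(\wt\psi)$; hence $\wt\psi$ is an eigenfunction of a formally symmetric operator in $x$, symmetrically in $y$, and so is self-adjoint in the sense of Definition \ref{self-adj-fn}.
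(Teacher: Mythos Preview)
Your proposal is correct and matches the paper's own approach: the paper does not give a separate proof but simply states that Theorem \ref{as-fixed2} is proved ``similarly to Theorem \ref{fixed}, using the fact that $\iota_{\psi_{\Ai},x}=\id$ and $\iota_{\psi_{\Be(\nu)},x}=\sigma$'', which is exactly the transcription you outline, with the same three inputs (extension of $a,s$ to Sato's Grassmannian from \cite[\S 1.4]{BHYcmp}, the identification of $\iota_{\psi,x}$, and the self-adjoint Darboux data from Theorem \ref{self-adj}). Your identification of the friction point in $(3)\Rightarrow(1)$ for the Bessel case --- the need for a bispectral involution $b$ on $\Gr^{\mathrm{ad},\Be(\nu)}$ with $bs=sb$, $ab=bsa$ --- is precisely the step the paper handles by citing the self-duality in \cite[Section 2]{BHYcmp}, so you are tracking the argument accurately.

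One small remark on your reserve route $(2)\Rightarrow(1)$: the phrase ``symmetrically in $y$'' hides that condition (2) speaks only of $\bislf(\wt\psi)$, not $\bisrf(\wt\psi)$, so to get a formally symmetric operator on the $y$-side you still need to pass through (3) and invoke the second half of \eqref{Ba}; this is fine within the cycle $(1)\Rightarrow(3)\Leftrightarrow(2)\Rightarrow(1)$ you set up, but it means the alternative is not entirely independent of the Grassmannian description.
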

The (normalized) bispectral Darboux transformations from $\psi_{\Ai}(x,y)$ and  $\psi_{\Be(\nu)}(x,y)$ (for $\nu \in \bbc \backslash \bbz$) 
are constructed as follows, see \cite[Sect. 2 and 4]{BHYcmp} for details. Let $\mathfrak d$ be a differential operator which is a polynomial with constant coefficients 
in the differential operators $\mathfrak d_{\Ai, x}$ (resp. $\mathfrak d_{\Be(\nu),x}$) from Figure \ref{elementary bispectral functions}. 
(As a consequence of this, $\mathfrak d^* = \mathfrak d$.) Let 
$\mathcal{O}$ be an open connected subset of $\bbc$ such that the dimension of the kernel of $\mathfrak d$ 
in the space of holomorphic functions on $\mathcal{O}$ equals the order of $\mathfrak d$. The kernel of the operator 
$\mathfrak d$  is given in terms of derivatives of the Airy function (resp. derivatives of the Bessel functions and 
products of powers of x and logarithmic functions) by Proposition 4.9 and Lemma 2.1 of \cite{BHYcmp}.

The planes in $\Gr^{\mathrm{ad}, \Ai}$ and $\Gr^{\mathrm{ad}, \Be(\nu)}$ correspond to bispectral functions of the form 
\[
\wt\psi(x,y) = \frac{1}{q(y) p(x)} \mathfrak u \cdot \psi(x,y), \; \; 
\mbox{where} \; \; \psi(x,y) = \psi_{\Ai}(x,y), \; \; \mbox{resp.} \; \; 
\psi(x,y) = \psi_{\Be(\nu)}(x,y),
\]
$\ker  \mathfrak u$ is a subspace of  $\mathfrak d$ satisfying certain adelic type conditions \cite[Definition 2.5 and Proposition 4.9]{BHYcmp}, and 
$q(y)$, $p(x)$ are appropriate normalization polynomials.
 
We define the {\bf{Airy Lagrangian adelic Grassmannian}} $\Gr_{\mathrm{Lagr}}^{\mathrm{ad}, \Ai}$ to be the sub-Grassmannian of $\Gr^{\mathrm{ad}, \Ai}$ 
consisting of those points for which $\ker  \mathfrak u$ is a Lagrangian subspace of $\ker \mathfrak d$ with respect to the symplectic form $\mathcal{C}_{\mathfrak d}( \cdot, \cdot)$.
Similarly, define the {\bf{Bessel Lagrangian adelic Grassmannian}} $\Gr_{\mathrm{Lagr}}^{\mathrm{ad}, \Be(\nu)}$ to be the sub-Grassmannian of $\Gr^{\mathrm{ad}, \Be(\nu)}$ 
consisting of those points for which $\ker  \mathfrak u$ is a Lagrangian subspace of $\ker \mathfrak d$ with respect to the symplectic form $\mathcal{C}_{\mathfrak d}( \cdot, \cdot)$.
and is preserved under the transformations $x \mapsto -x$. 

From Theorems \ref{Lagr} and \ref{as-fixed2}, we get:

\begin{thm}
The self-adjoint bispectral Darboux transformations from the Airy bispectral function $\psi_{\Ai}(x,y)$ (resp. the Bessel bispectral functions $\psi_{\Be(\nu)}(x,y)$
for $\nu \in \bbc \backslash \bbz$) are in bijective correspondence with the points of the Airy Lagrangian adelic Grassmannian $\Gr^{\mathrm{ad}, \Ai}_{\mathrm{Lagr}}$
(resp. the Bessel Lagrangian adelic Grassmannian $\Gr_{\mathrm{Lagr}}^{\mathrm{ad}, \Be(\nu)}$).
\end{thm}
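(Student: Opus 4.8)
The plan is to obtain the bijection by restricting, on both sides, the already-established bijection between (normalized) bispectral Darboux transformations and the points of $\Gr^{\mathrm{ad},\Ai}$ (resp. $\Gr^{\mathrm{ad},\Be(\nu)}$) recalled in \S\ref{7.4}, and then matching the self-adjointness condition on the analytic side with the Lagrangian condition on the geometric side via Theorems~\ref{Lagr} and \ref{as-fixed2}, following the rank $1$ template of Theorem~\ref{fixed} and the discussion of $\Grad_{\mathrm{Lagr}}$.

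First I would fix notation: a point $W$ of $\Gr^{\mathrm{ad},\Ai}$ (resp. $\Gr^{\mathrm{ad},\Be(\nu)}$) is determined by a differential operator $\mathfrak d$ that is a polynomial with constant coefficients in $\mathfrak d_{\Ai,x}$ (resp. $\mathfrak d_{\Be(\nu),x}$), so that $\mathfrak d^* = \mathfrak d$, together with a subspace $V = \ker\big(\tfrac{1}{p(x)}\mathfrak u\big)$ of $\ker\mathfrak d$ satisfying the adelic-type conditions of \cite[Def.~2.5, Prop.~4.9]{BHYcmp}, where $\tfrac{1}{p(x)}\mathfrak u$ is the monic operator of kernel $V$; the associated bispectral function is $\wt\psi(x,y) = \frac{1}{q(y)p(x)}\mathfrak u\cdot\psi(x,y)$ with $\psi = \psi_{\Ai}$ (resp. $\psi_{\Be(\nu)}$). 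By the corollary following Theorem~\ref{self-adj}, $\wt\psi$ is a self-adjoint bispectral Darboux transformation of $\psi$ exactly when it is self-adjoint as a bispectral function, which is condition~(1) of Theorem~\ref{as-fixed2}; and in that case $\wt p = p$, $\wt q = q$, $\wt{\mathfrak u} = \mathfrak u^*$ and $b_\psi(\wt{\mathfrak u}) = b_\psi(\mathfrak u)^*$ by Definition~\ref{selfadj}. Since the underlying correspondence $\wt\psi\leftrightarrow W$ is already a bijection, it suffices to check that it restricts to a bijection between self-adjoint $\wt\psi$ and the planes lying in the Lagrangian sub-Grassmannian.

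The heart of the argument is the following translation. If $\wt\psi$ is a self-adjoint bispectral Darboux transformation, then combining \eqref{factor} with $\wt{\mathfrak u} = \mathfrak u^*$ and $\wt p = p$ gives that
\[
\mathfrak d \;=\; \mathfrak u^*\frac{1}{p(x)^2}\mathfrak u \;=\; \left(\tfrac{1}{p(x)}\mathfrak u\right)^{*}\left(\tfrac{1}{p(x)}\mathfrak u\right)
\]
is an element of $\bislf(\psi)$, which by Example~\ref{Fourier-ex} is the polynomial algebra in $\mathfrak d_{\Ai,x}$ (resp. $\mathfrak d_{\Be(\nu),x}$); hence $\mathfrak d = \mathfrak c^*\mathfrak c$ for the monic operator $\mathfrak c = \tfrac{1}{p(x)}\mathfrak u$ with $\ker\mathfrak c = V$, so by Theorem~\ref{Lagr} the subspace $V$ is Lagrangian in $\ker\mathfrak d$ with respect to $\mathcal{C}_{\mathfrak d}(\cdot,\cdot)$. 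Conversely, starting from a $V$ Lagrangian in $\ker\mathfrak d$, Theorem~\ref{Lagr} produces a monic $\mathfrak c$ with $\ker\mathfrak c = V$ and $\mathfrak d = \mathfrak c^*\mathfrak c$, and setting $\tfrac{1}{p(x)}\mathfrak u := \mathfrak c$ recovers a self-adjoint bispectral Darboux transformation; thus (Airy case) invariance of $W$ under the adjoint involution $a$, i.e. condition~(3) of Theorem~\ref{as-fixed2}(a), is equivalent to $V$ being Lagrangian. For the Bessel case the extra condition $b_\psi(\wt{\mathfrak u}) = b_\psi(\mathfrak u)^*$ — equivalently, invariance of $W$ under the sign involution $s$ — translates, via \eqref{Bs} and the identity $\iota_{\psi_{\Be(\nu)},x} = \sigma$ from \S\ref{subFour}, into invariance of $V = \ker\mathfrak u$ under $x\mapsto -x$; in the Airy case $\iota_{\psi_{\Ai},x} = \id$ and this condition is both automatic and absent from the definition of $\Gr^{\mathrm{ad},\Ai}_{\mathrm{Lagr}}$. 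Comparing with the definitions of $\Gr^{\mathrm{ad},\Ai}_{\mathrm{Lagr}}$ and $\Gr^{\mathrm{ad},\Be(\nu)}_{\mathrm{Lagr}}$ and restricting the bijection of the first step then yields the theorem.

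I expect the main obstacle to be the third paragraph: one must carefully track the normalizations $\wt p = p$, $\wt q = q$ (so that the operator in \eqref{factor} is genuinely of the form $\mathfrak c^*\mathfrak c$, rather than $h(x)\,\mathfrak c^*\mathfrak c$, the latter needing the square-root rescaling from the proof of Theorem~\ref{Lagr}), verify that the adelic-type conditions defining $\Gr^{\mathrm{ad},\Ai}$ and $\Gr^{\mathrm{ad},\Be(\nu)}$ are simply intersected with (and not disturbed by) the Lagrangian and $x\mapsto -x$ conditions, and confirm that the reconstruction of a self-adjoint Darboux transformation from a Lagrangian $V$ lands back in the correct Grassmannian. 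Everything else is a direct combination of Theorems~\ref{Lagr} and \ref{as-fixed2}.
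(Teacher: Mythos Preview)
Your proposal is correct and takes essentially the same approach as the paper: the paper's proof is the single sentence ``From Theorems~\ref{Lagr} and \ref{as-fixed2}, we get,'' and your argument is precisely a fleshed-out version of that combination, matching the Lagrangian condition to the factorization $\mathfrak d = \mathfrak c^*\mathfrak c$ via Theorem~\ref{Lagr} and the self-adjointness to the fixed-point conditions via Theorem~\ref{as-fixed2}. The normalization and adelic-compatibility concerns you flag in your final paragraph are legitimate but are not addressed any more explicitly in the paper than in your sketch.
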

This theorem gives an explicit algorithmic way to construct all self-adjoint bispectral Darboux transformations from the Airy and Bessel bispectral functions.
\bibliographystyle{plain}
%\bibliography{intops}

\end{document}